\newcommand{\bbR}{\mathbb{R}}
\newcommand{\bbZ}{\mathbb{Z}}
\newcommand{\eps}{\varepsilon}
\renewcommand{\setminus}{\backslash}
\def\@sect#1#2#3#4#5#6[#7]#8{\ifnum #2>\c@secnumdepth
     \let\@svsec\@empty\else
     \refstepcounter{#1}\edef\@svsec{%
     \csname pre#1\endcsname 
     \csname the#1\endcsname
     \csname post#1\endcsname 
\hskip 1em}\fi
     \@tempskipa #5\relax
      \ifdim \@tempskipa>\z@
	\begingroup #6\relax
	  \@hangfrom{\hskip #3\relax\@svsec
		   }{\interlinepenalty \@M \ignorespaces#8\par}%
	\endgroup
       \csname #1mark\endcsname{\protect\ignorespaces #7}\addcontentsline
	 {toc}{#1}{\ifnum #2>\c@secnumdepth \else
		      \protect\numberline{\csname the#1\endcsname
		     \csname post#1\endcsname 
}\fi
\protect\ignorespaces #7}\else
	\def\@svsechd{#6\hskip #3\relax  
		   \@svsec #8\csname #1mark\endcsname
		      {\protect\ignorespaces #7}\addcontentsline
			   {toc}{#1}{\ifnum #2>\c@secnumdepth \else
			     \protect\numberline{\csname the#1\endcsname
			  \csname post#1\endcsname 
}\fi
\protect\ignorespaces #7}}\fi
     \@xsect{#5}}
\newtheorem{theorem}{Theorem}[section]
\newtheorem{lemma}[theorem]{Lemma}
\newtheorem{proposition}[theorem]{Proposition}
\newtheorem{corollary}[theorem]{Corrollary}
\theoremstyle{definition}
\newtheorem{definition}[theorem]{Defenition}
\newtheorem{remark}[theorem]{Remark}
\newcommand{\fr}{\frac} 
\newcommand{\dtx}{\dot{x}}
\newcommand{\dty}{\dot{y}}
\newcommand{\padi}[2]{\fr{\partial #1}{\partial #2}}
\newcommand{\dd}[2]{\frac{d #1}{d #2}}
\newcommand{\bref}[1]{(\ref{#1})}
\newcommand{\dt}{\dot}
\newcommand{\Sepl}{S_\eps^+}
\newcommand{\Semn}{S_\eps^-}
\newcommand{\const}{\mathop{\rm const}}
\newcommand{\bbT}{\mathbb T}
\newcommand{\gaeps}{\gamma_\eps}
\newcommand{\delp}{\delta_{+}}
\newcommand{\delpl}{\delta_{+}}
\newcommand{\delmn}{\delta_{-}}
\newcommand{\delpm}{\delta_{\pm}}
\newcommand{\xpl}{x_{+}}
\newcommand{\xmn}{x_{-}}
\newcommand{\xpm}{x_{\pm}}
\newcommand{\Upl}{U^{+}}
\newcommand{\Umn}{U^{-}}
\newcommand{\Upm}{U^{\pm}}
\newcommand{\Dep}{D_\eps^+}
\newcommand{\Dem}{D_\eps^-}
\newcommand{\Depm}{D_\eps^\pm}
\newcommand{\Depl}{\Dep}
\newcommand{\Pep}{\Pi_\eps^{+}}
\newcommand{\Pem}{\Pi_\eps^{-}}
\newcommand{\spl}{\sigma^{+}}
\newcommand{\tpl}{\tau^{+}}
\newcommand{\smn}{\sigma^{-}}
\newcommand{\tmn}{\tau^{-}}
\newcommand{\spm}{\sigma^{\pm}}
\newcommand{\tpm}{\tau^{\pm}}
\newcommand{\Gapl}{\Gamma^{+}}
\newcommand{\Gamn}{\Gamma^{-}}
\newcommand{\Gapm}{\Gamma^{\pm}}
\newcommand{\Gpl}{G^{+}}
\newcommand{\Gmn}{G^{-}}
\newcommand{\Jpl}{J^{+}}
\newcommand{\Jmn}{J^{-}}
\newcommand{\rPeab}[2]{P_\eps^{\rarc{#1}{#2}}}
\newcommand{\lPeab}[2]{P_\eps^{\larc{#2}{#1}}}
\newcommand{\alpl}{\alpha^{+}}
\newcommand{\almn}{\alpha^{-}}
\newcommand{\alpm}{\alpha^{\pm}}
\newcommand{\pepl}{p^{+}_\eps}
\newcommand{\pemn}{p^{-}_\eps}
\newcommand{\pepm}{p^{\pm}_\eps}
\newcommand{\qepl}{q^{+}_\eps}
\newcommand{\qemn}{q^{-}_\eps}
\newcommand{\qepm}{q^{\pm}_\eps}
\newcommand{\Bepl}{B_{\eps}^{+}}
\newcommand{\Bemn}{B_{\eps}^{-}}
\newcommand{\Bepm}{B_{\eps}^{\pm}}
\newcommand{\Aepl}{A_{\eps}^{+}}
\newcommand{\Aemn}{A_{\eps}^{-}}
\newcommand{\Aepm}{A_{\eps}^{\pm}}
\newcommand{\Eepl}{E_{\eps}^{+}}
\newcommand{\Eemn}{E_{\eps}^{-}}
\newcommand{\Eepm}{E_{\eps}^{\pm}}
\newcommand{\Beplb}{\bar B_{\eps}^{+}}
\newcommand{\Bemnb}{\bar B_{\eps}^{-}}
\newcommand{\Aeplb}{\bar A_{\eps}^{+}}
\newcommand{\Aemnb}{\bar A_{\eps}^{-}}
\newcommand{\Aepmb}{\bar A_{\eps}^{\pm}}
\newcommand{\Eeplb}{\bar E_{\eps}^{+}}
\newcommand{\Eemnb}{\bar E_{\eps}^{-}}
\newcommand{\Eepmb}{\bar E_{\eps}^{\pm}}
\newcommand{\apl}{a^{+}}
\newcommand{\amn}{a^{-}}
\newcommand{\apm}{a^{\pm}}
\newcommand{\ypl}{y^{+}}
\newcommand{\ymn}{y^{-}}
\newcommand{\ypm}{y^{\pm}}
\newcommand{\Cnpl}{C_n^{+}}
\newcommand{\Cnmn}{C_n^{-}}
\newcommand{\Cnpm}{C_n^{\pm}}
\newcommand{\tCnpl}{\widetilde{C}_n^{+}}
\newcommand{\tCnmn}{\widetilde{C}_n^{-}}
\newcommand{\tCnpm}{\widetilde{C}_n^{\pm}}
\newcommand{\dde}{\frac{d}{d\eps}}
\newcommand{\Phpl}{\Phi^{+}}
\newcommand{\Phmn}{\Phi^{-}}
\newcommand{\Phpm}{\Phi^{\pm}}
\newcommand{\yptom}{y_{+}^{-}}
\newcommand{\Qeinv}{Q_\eps^{-1}}
\newcommand{\laeps}{\lambda_\eps}
\newcommand{\Ga}{\Gamma}
\newcommand{\Gaeps}{\Gamma_\eps}
\newcommand{\la}{\lambda}
\newcommand{\inv}[1]{\left(#1\right)^{-1}}
\newcommand{\wtQ}{\widetilde{Q}}
\newcommand{\Ppm}{P_{+}^{-}}
\newcommand{\Si}{\Sigma}
\newcommand{\tgam}{\tilde{\gamma}}
\newcommand{\yy}{y'_{y_0}}
\newcommand{\tix}{\tilde{x}}
\newcommand{\Reps}{R_\eps}
\newcommand{\Qeps}{Q_\eps}
\newcommand{\rarc}[2]{[#1,#2\rangle}
\newcommand{\larc}[2]{\langle#1,#2]}
\newcommand{\Repl}{\Reps^+}
\newcommand{\Remn}{\Reps^-}
\newcommand{\bRepl}{\bar{R}_\eps^+}
\newcommand{\bRemn}{\bar{R}_\eps^-}
\newcommand{\bRepm}{\bar{R}_\eps^{\pm}}
\title{Ducks on the torus: existence and uniqueness}
\author{Ilya V. Schurov\thanks{Moscow State University, Mechanics and
mathematics department, Leninskie gory, Moscow, 119991. E-mail:
ilya@schurov.com. The author is supported in part by RFBR and CNRS (project
05-01-02801-CNRS-L-a) and RFBR project 07-01-00017-a}}
\date{}
\begin{document}

\sloppy
\maketitle




\begin{abstract}
	We show that there exist generic slow-fast systems with only one
	(time-scaling) parameter on the two-torus, which have canard cycles for
	arbitrary small values of this parameter. This is in drastic contrast with
	the planar case, where canards usually occur in two-parametric families. Here
	we treat systems with a convex slow curve. In this case there is a set of
	parameter values accumulating to zero for which the system has exactly one
	attracting and one repelling canard cycle. The basin of the attracting cycle is
	almost the whole torus.
\end{abstract}
\vskip 2pc

\hskip 1.3pc UDC 517.925.42+517.938.

\vskip 1pc

\hskip 1.3pc AMS MSC 2010: 70K70, 37G15.
\vskip 1pc
{\small \hskip 1.3pc 
Keywords: slow-fast systems, canards, limit cycles, Poincar\'e map, distortion
lemma.
}

\vskip 2pc

\section{Introduction}

Consider a generic slow-fast system on the plane:
\begin{equation}
	\begin{cases}
		\dt x=f(x,y,\eps)\\
		\dt y=\eps g(x,y,\eps)\\
	\end{cases}\quad (x,y)\in\bbR^2,\quad \eps\in(\bbR,0).
\end{equation}
There is a rather simple description of its behavior for small $\eps$. It consists of
interchanging phases of slow motion along stable parts of the slow curve
$M:=\{(x,y)\mid f(x,y,0)=0\}$ and fast jumps along straight lines $y=\const$. Given
additional parameters, which depend on $\eps$, one can observe more complicated
behavior: appearance of \emph{duck} (or \emph{canard}) solutions (particularly
limit cycles), i.e. solutions, whose phase curves contain an arc of length
bounded away from 0 uniformly in $\eps$, that keeps close to the unstable part
of the slow curve (see~\cite{D} and~\cite{DR}). 


In~\cite{GI}, Yu.~S.~Ilyashenko and J.~Guckenheimer discovered
a new kind of behavior of slow-fast systems on the two-torus. It
was shown that for some particluar family with no auxiliary parameters
there exists a sequence of intervals accumulating at $0$, such that for any
$\eps$ from these intervals, the system has exactly two limit cycles,
both of which are canards, where one is stable and the other unstable.

Yu.~S.~Ilyashenko and J.~Guckenheimer conjectured that there exists an
open domain in the space of slow-fast systems on the two-torus with the same
property. This work is devoted to the proof of this conjecture: we generalize
the result of~\cite{GI} for generic slow-fast systems with convex slow
curve. The work is based on the ideas of~\cite{GI} and has similar structure:
Section 2 states the Main Theorem and  outlines its proof,
consisting of a sequence of auxiliary lemmas. In Section 3 we state necessary
theorems about normal forms of slow-fast systems. The lemmas are proved in Section
4, and some auxiliary propositions are proved in the Appendix (Section 5).

The author would like to express his sincere appreciation to Yu.~S.~Ilyashenko
for the statement of the problem and his assistance with the work, to A.~Fishkin
for assistance with the work, to V.~Kleptsyn for fruitful discussions, valuable
comments on the text of the work and idea of using Distortion Lemma, to
G.~Kolutsky for valuable comments on the text of the work. The author also
grateful to the anonymous referee for valuable comments.

\section{Slow-fast systems on the two-torus and Poincar\'e map}
\subsection{Preliminary statement of the main result}
Consider a slow-fast system on the two-torus:
\begin{equation}\label{eq-main}
	\begin{cases}
		\dt x=f(x,y,\eps)\\
	\dt y=\eps g(x,y,\eps)\\
	\end{cases}\quad (x,y)\in\bbT^2\cong \bbR^2/(2\pi\bbZ^2),\ \eps \in (\bbR,0),
\end{equation}
where functions $f$ and $g$ are assumed to be smooth enough.

The following theorem is a corollary of the main result (Theorem~\ref{thm-main}
below):

\begin{theorem}
	\label{thm-main-euristic}

	There exists an open set in the space of slow-fast systems on the two-torus
	with the following property. For every system from this set there exists a
	sequence of intervals accumulating at zero, such that for every $\eps$ that belongs
	to these intervals the system has an attracting canard cycle. The basin of this
	cycle is the whole torus excluding exactly one unstable cycle.
\end{theorem}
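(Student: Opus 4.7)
The plan is to follow the overall architecture of \cite{GI}: reduce the existence of canard cycles to the analysis of a Poincar\'e return map, and exhibit the desired sequence of intervals of $\eps$ by a continuity argument that tracks how the fast motion reshuffles the cross-section as $\eps$ varies. Since Theorem~\ref{thm-main-euristic} is a qualitative consequence of the Main Theorem (Theorem~\ref{thm-main}) stated below, the real tasks are (i) specifying the open set of systems and (ii) setting up the machinery the Main Theorem needs. I would take the open set to consist of those systems for which the slow curve $M=\{f(x,y,0)=0\}$ is, in the universal cover, a closed smooth strictly convex curve on $\bbT^2$ with exactly two non-degenerate fold points with respect to the fast direction, such that the slow dynamics on $M$ points from the stable arc into the unstable arc at each fold, and such that the fast trajectory leaving each fold eventually meets the basin of the stable arc adjacent to the other fold. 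All of these are $C^k$-open conditions, which yields the required openness.

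Next I would fix a cross-section $\Gamma$ transverse to the fast direction and bounded away from $M$, and study the first return map $P_\eps:\Gamma\to\Gamma$. On most of $\Gamma$ the trajectory gets exponentially close to the stable branch of $M$, slides along it until near a fold, and then either jumps immediately (regular relaxation) or continues along the unstable branch for a length determined by how close the incoming trajectory was to the exponentially thin canard slow manifold. Using the normal forms near fold points stated in Section~3 together with Mishchenko--Rozov-type entry--exit estimates, one writes $P_\eps$ as a composition of strongly contracting maps off the folds, a fold-passage map encoding the canard relation, and a smooth return map on the far side of the torus. The Distortion Lemma (credited to V.~Kleptsyn in the acknowledgements) is the key technical instrument: it converts the local fold estimates into uniform bounds on $P_\eps$ and its derivative over large subsets of $\Gamma$, despite the fact that derivatives of order $e^{c/\eps}$ appear inside the canard window.

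The mechanism producing the intervals of $\eps$ is then the following: for $\eps$ outside the canard windows $P_\eps$ is exponentially close to a constant, and therefore has a single globally attracting fixed point, which corresponds to the usual relaxation cycle and is not a canard; as $\eps$ sweeps through a canard window, the graph of $P_\eps$ develops a steep but controlled segment that crosses the diagonal transversely at exactly two points, one attracting and one repelling, both corresponding to canard cycles. The exponential contraction off the slow curve then promotes the attracting fixed point to an attractor whose basin is all of $\bbT^2$ minus the repelling cycle, giving the last sentence of the theorem. The main obstacle in this program is the uniform control of $P_\eps$ through the canard window: one must rule out intermediate critical points and extra fixed points produced by the exponentially large derivative, which is precisely the r\^ole of the Distortion Lemma rather than any naive linearization.
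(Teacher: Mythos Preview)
Your overall architecture matches the paper's: Theorem~\ref{thm-main-euristic} is indeed deduced from the Main Theorem~\ref{thm-main}, the open set is cut out by genericity conditions on the slow curve (convexity, two nondegenerate folds, $g>0$), and the analysis proceeds via the Poincar\'e return map $P_\eps$ on a global cross-section~$\Gamma$. Your extra conditions on the open set (the slow flow pointing from the stable into the unstable arc, and the fast trajectory from one fold reaching the other basin) are not needed: on the torus with $g>0$ and a convex contractible slow curve these are automatic, and the paper's list of conditions is precisely items~\ref{enum-cond-first}--\ref{enum-cond-last}.

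Where your proposal diverges from the paper is in the identification of the key steps. The paper organizes the proof of Theorem~\ref{thm-main} around three lemmas: the \emph{Shape Lemma} (the graph $\gamma_\eps$ of $P_\eps$ lies in the union of an exponentially thin vertical ring $\Pi_\eps^+$ and horizontal ring $\Pi_\eps^-$), the \emph{Monotonicity Lemma} (the corners of the rectangle $K_\eps=\Pi_\eps^+\cap\Pi_\eps^-$ drift monotonically with $\eps$, so the diagonal $\Delta$ passes through $K_\eps$ infinitely often as $\eps\to0$, producing the intervals $R_n$), and the \emph{Convexity Lemma} (on the set where $P_\eps'\in[1/2,2]$ the derivative is monotone on each of two arcs). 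You do not isolate the Monotonicity Lemma at all, yet it is the statement that actually manufactures the accumulating sequence of intervals; your ``continuity argument that tracks how the fast motion reshuffles the cross-section'' is too vague to stand in for it.

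More seriously, you misplace the Distortion Lemma. Ruling out extra fixed points in the canard window is the job of the Convexity Lemma, and its proof is an explicit calculation (Proposition~\ref{prop-Ppm-log-deriv-est}) showing that the transition map in normalizing charts behaves like $\xi\mapsto -\mathrm{const}/\xi$; this is what forces $P_\eps''$ to have a definite sign on each arc of~$U$. The Distortion Lemma enters only in Section~\ref{sec-technique}, and only to prove the technical estimate of Theorem~\ref{thm-poincare-jump-est} for the Poincar\'e map through the fast-rotation region \emph{away} from the slow curve, near a fold; it is one ingredient feeding into the Convexity Lemma, not the mechanism that controls the number of fixed points. So your last paragraph has the right worry but assigns it to the wrong tool.
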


A rigorous definition of the term ``canard solution'' as well as conditions that
define the open set mentioned in the theorem above are given in the next
section.  The main result (Theorem ~\ref{thm-main}) is a stronger version of
Theorem~\ref{thm-main-euristic}.

\subsection{Full statement of the main result}
For the slow-fast system~\bref{eq-main} denote its slow curve by $M$:
\begin{equation}
	M:=\{(x,y)\mid f(x,y,0)=0\}
\end{equation}
Impose the following conditions of local genericity on system~\bref{eq-main}:

\begin{enumerate}
	\item\label{enum-cond-first} The speed of the slow motion is bounded away from zero: $g>0$.
	\item $M$ is a smooth curve. 

\item The lift of the curve $M$ to the covering coordinate plane is contained
		in the interior of the fundamental square $\{|x|<\pi,\ |y|<\pi\}$ and is
		convex. This, in particular, implies that there are two jump points
		(straight and inverse jumps), which are the far right and the far left
		points of $M$. (See Fig.~\ref{fig-general-view}.) We denote them $G^-$ and
		$G^+$ respectively.

\item The following nondegenericity assumption holds in every point $(x,y)\in
	M\setminus\{G^+,G^-\}$:
	\begin{equation}\label{eq-nondeg-nondeg}
		\padi{f(x,y,0)}{x}\ne 0.
	\end{equation}

	\item \label{enum-cond-last} The following nondegenericity assumptions hold
		in the jump points: 
		\begin{equation}\label{eq-nondeg-main}
			\left.\padi{^2 f(x,y,0)}{x^2}\right|_{G^\pm} \ne 0,\quad
			\left.\padi{f(x,y,0)}{y}\right|_{G^\pm} \ne 0
		\end{equation}
\end{enumerate}
Conditions \ref{enum-cond-first}--\ref{enum-cond-last} define an open set in the
space of slow-fast systems on the two-torus.

\begin{figure}[t]
	\psfrag{Jplus}{$\Jpl$}
	\psfrag{Gaminus}{$\Gamn$}
	\psfrag{Gaplus}{$\Gapl$}
	\psfrag{x}{$x$}
	\psfrag{y}{$y$}
	\psfrag{Dpl}{$\Dep$}
	\psfrag{Gpl}{$\Gpl$}
	\psfrag{Mpl}{$M^+$}
	\psfrag{Mmn}{$M^-$}
	\psfrag{Gmn}{$\Gmn$}
	\psfrag{Jminus}{$\Jmn$}
	\psfrag{Dmn}{$\Dem$}
	\psfrag{Dmn}{$\Dem$}
	\psfrag{delpl}{$\delpl$}
	\psfrag{delmn}{$\delmn$}
	\psfrag{pi}{$\pi$}
	\psfrag{mp}{$-\pi$}
	\psfrag{tpl}{$\tpl$}
	\psfrag{alpl}{$\alpl$}
	\psfrag{almn}{$\almn$}
	\psfrag{tmn}{$\tmn$}
	\psfrag{Gamma}{$\Gamma$}
	\begin{center}
		\includegraphics[scale=1]{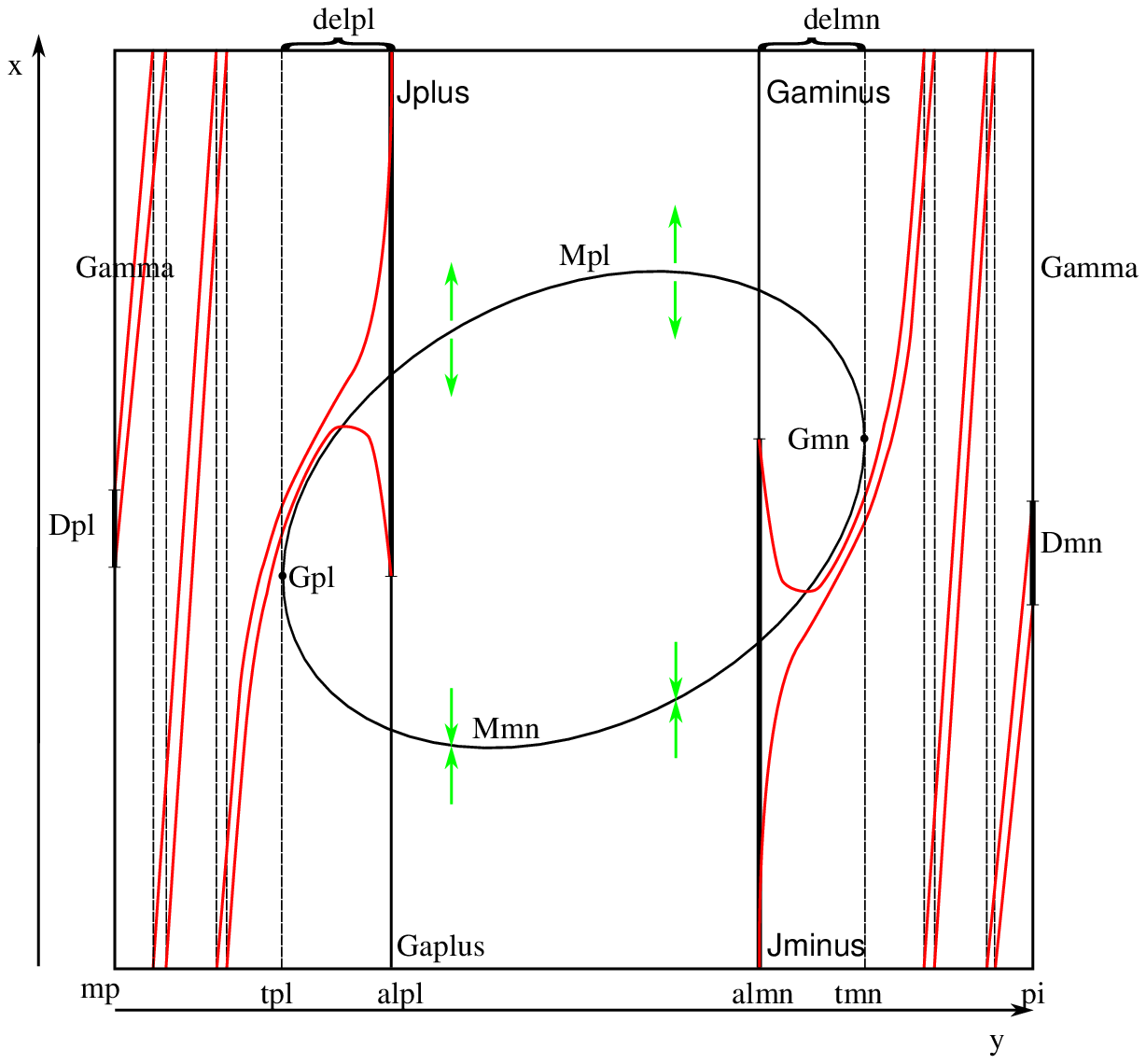}
		\caption{Slow-fast system on the two-torus. $x$-axis is directed upward, red lines are trajectories of the system.}\label{fig-general-view}
	\end{center}
\end{figure}
\begin{remark}
	Without loss of generality (by altering the direction of the $x$-axis if necessary)
	we can assume that: 
	\begin{equation}\label{eq-nondeg-main-wlog}
		\begin{array}{rcl}
			\left.\padi{f(x,y,0)}{y}\right|_{G^+} < 0,&\quad&
			\left.\padi{^2 f(x,y,0)}{x^2}\right|_{G^+} > 0,\\
			&\vspace{0pt}&\\
			\left.\padi{f(x,y,0)}{y}\right|_{G^-} > 0,&\quad&
			\left.\padi{^2 f(x,y,0)}{x^2}\right|_{G^-} > 0.
		\end{array}
	\end{equation}
\end{remark}
Let $M^\pm$ be the stable ($-$) and unstable ($+$) parts of the slow curve.  Let us
fix a vertical interval $J^+$ that crosses the unstable part of the slow curve
$M^+$ near the jump point $G^+$ and does not intersect $M^-$(the exact position of this interval will be
specified later). 

\begin{definition}\label{def-canards}
Any solution that crosses $J^+$ is a \emph{canard solution}.
\end{definition}

In order to construct canard cycles, we will study the Poincar\'e map $P_\eps$
from the global cross-section $\Gamma=\{y=-\pi\}$ to itself along the
trajectories of system~\bref{eq-main}. As the function $g$ is bounded away from
zero, this map is a well-defined diffeomorphism of a circle to itself. Note that
periodic trajectories of system~\bref{eq-main} correspond to periodic (in
particular, fixed) points of this map; thus, it is natural to consider the
rotation number of~$P_\eps$. Let us lift~$P_\eps$ to universal cover and denote
the rotation number of lifted map by $\rho(\eps)\in \bbR$.

\begin{remark}
	Hereafter by ``the assertion holds for small $\eps$'' we shall mean that the
	following condition is satisfied: there exists an $\eps_0>0$ such that the
	assertion holds for every $\eps\in (0,\eps_0]$. We can choose a universal
	$\eps_0$ for all such assertions in the paper.
\end{remark}

\begin{theorem}[Main result]
	\label{thm-main}
	For any system~\bref{eq-main} that satisfies conditions
	\ref{enum-cond-first}--\ref{enum-cond-last}, there exists a sequence of
	non-intersecting intervals $\{R_n\}$, $R_n=[\alpha_n,\beta_n]$ and two
	sequences of non-intersecting intervals $C_n^\pm \subset R_n$ with the
	following properties:
	\begin{enumerate}
			\item $|R_n|=O(e^{-Cn})$ for some $C>0$.
			\item $\alpha_n=O(1/n)$
				\item For every $\eps$ sufficiently small, not belonging to the
					$R_n$'s, the rotation number $\rho(\eps)$ is an integer. For
					such~$\eps$ there are exactly two periodic trajectories, both of
					which are hyperbolic, where one is stable and the other unstable.
					The unstable one is a canard. 
					\item For every sufficiently small $\eps\in C_n^\pm$,
						system~\bref{eq-main} has exactly two periodic trajectories,
						both of which are hyperbolic (where one stable and one
						unstable), and both are canards.
				\end{enumerate}
\end{theorem}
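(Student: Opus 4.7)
The plan is to read all four assertions off the lifted Poincar\'e map $\widetilde P_\eps:\bbR\to\bbR$ on the cross-section $\Gamma=\{y=-\pi\}$: periodic trajectories of~\bref{eq-main} correspond to fixed points of iterates of $\widetilde P_\eps$ modulo $2\pi$, hyperbolicity to an iterate-derivative bounded away from $1$, and the intervals $R_n$ and $C_n^\pm$ are read off the rotation number $\rho(\eps)$. The first step is to decompose one application of $\widetilde P_\eps$ into a chain of local transition maps---entry to a tubular neighbourhood of the stable slow curve $M^-$, slow transit along $M^-$, passage through one of the jump points $G^\pm$, fast jump across the torus, and return to $\Gamma$---and to write each piece as an explicit $\eps$-asymptotic using the normal-form theorems stated in Section~3. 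The decisive property here is Fenichel-type squeezing along $M^-$: non-canard trajectories are compressed transversely by a factor of order $e^{-C/\eps}$ during one passage, so outside the canard regime the derivative of $\widetilde P_\eps$ is exponentially small in $1/\eps$.

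Since $\dty=\eps g$ with $g>0$, the rotation number $\rho(\eps)$ is continuous and monotone in $\eps$ for small $\eps$ and satisfies $\rho(\eps)\sim T_0/(2\pi\eps)$, where $T_0$ is the time of the limiting relaxation cycle; this at once gives $\alpha_n=O(1/n)$. I then take $I_n:=\{\eps:\rho(\eps)=n\}$ and $R_n$ to be the complementary transition interval between $I_{n-1}$ and $I_n$. Passing from rotation number $n-1$ to $n$ requires an order-one displacement in the graph of $\widetilde P_\eps$, while on the non-canard locus $\widetilde P_\eps$ can only vary by $e^{-C/\eps}=e^{-Cn}$, which forces $|R_n|=O(e^{-Cn})$. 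On $I_n\setminus R_n$ the attracting fixed point of $\widetilde P_\eps^{\,n}$ comes straight from the exponential contraction; an intermediate-value argument on the circle produces a complementary repelling fixed point, which is compelled to slide past one of the jump points and traverse an arc of $M^+$, hence is a canard. Uniqueness of the pair follows from monotonicity of the transition maps, yielding item~3.

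Inside each $R_n$ the canard arc of the unstable cycle varies monotonically with $\eps$, and at the two values where this arc reaches one of the jump points $G^\pm$ a canard explosion occurs, after which the formerly attracting cycle also travels along $M^+$ for a definite slow-time length; the two sub-intervals on which this happens at $G^+$ and $G^-$ give $C_n^+$ and $C_n^-$ respectively, and hyperbolicity on them is verified from the explicit product of transition derivatives. The main obstacle is the passage along the unstable curve $M^+$: it is exponentially expanding, so two nearby initial conditions or two nearby values of $\eps$ can yield wildly different fates, and the naive estimates lose all control of the derivative of the canard transition map. This is where the Distortion Lemma cited in the acknowledgements becomes essential: it bounds the variation of that derivative over the admissible range of starting points, allowing the monotone dependence on $\eps$ of the canard length to be converted into the quantitative statements about $R_n$ and $C_n^\pm$. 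Verifying that the distortion estimate holds uniformly in $n$, across all the tongues, is the heart of the proof.
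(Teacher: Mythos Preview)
Your proposal has the right broad picture---decompose the Poincar\'e map, exploit exponential contraction along $M^-$, and track how the graph of $\widetilde P_\eps$ moves with $\eps$---but there is a concrete incompatibility in your definition of $R_n$, and you have misidentified where the Distortion Lemma enters.

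\textbf{The definition of $R_n$.} You set $I_n=\{\eps:\rho(\eps)=n\}$ and take $R_n$ to be the ``transition interval'' between $I_{n-1}$ and $I_n$, i.e.\ the set where $\rho(\eps)$ is not an integer. But item~4 of the theorem says $C_n^\pm\subset R_n$ and that on $C_n^\pm$ the system has \emph{two hyperbolic periodic trajectories}; two hyperbolic fixed points of $P_\eps$ force $\rho(\eps)\in\bbZ$. So $C_n^\pm$ cannot sit inside a set on which $\rho$ is non-integer, and your $R_n$ is too small. The paper instead defines $R_n$ geometrically: it is the set of $\eps$ for which the diagonal $\Delta=\{y=x\}$ meets the small rectangle $K_\eps=\Dep\times\Dem$ carved out by the Shape Lemma. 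This $R_n$ properly contains both the non-integer-$\rho$ interval and the canard intervals $C_n^\pm$ on either side of it.

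\textbf{The role of the Distortion Lemma.} You locate the ``main obstacle'' in the exponentially expanding passage along $M^+$ and invoke the Distortion Lemma there. In the paper that passage is handled entirely by the linear normal form of Theorem~\ref{thm-normnearss}; no distortion argument is needed, because in normalizing coordinates the map is literally multiplication by a scalar. The Distortion Lemma appears in a different place (Section~\ref{ssec-Reps-estimate-proof}): it controls the derivative of the \emph{fast} transition from $\Gamma$ to a cross-section $\Gaeps^1=\{y=\tpl-\eps^\nu\}$ that creeps toward the jump point. That map is a long composition of iterates of a ``vertical'' Poincar\'e map $\psi:\{x=0\}\to\{x=0\}$, and the Distortion Lemma bounds the nonlinearity accumulated over those $O(1/\eps)$ iterates. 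This is what feeds Theorem~\ref{thm-poincare-jump-est}, which in turn is the input to the Convexity Lemma.

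\textbf{What the paper actually does.} The three Lemmas---Shape, Monotonicity, Convexity---describe the graph $\gaeps$ precisely enough that everything is read off from its intersection with $\Delta$: Shape says $\gaeps$ lives in the union of two exponentially thin rings; Monotonicity says the rectangle $K_\eps$ where they cross moves with $\eps$-speed tending to $+\infty$ (this plus $|K_\eps|=O(e^{-C/\eps})$ gives $|R_n|=O(e^{-Cn})$ via the mean value theorem, which is the rigorous version of your ``order-one displacement versus $e^{-C/\eps}$ variation'' heuristic); Convexity says $P_\eps'=1$ at exactly two points, which gives uniqueness of the fixed points both inside and outside $R_n$. Your ``monotonicity of transition maps'' does not by itself give uniqueness---you need convexity of the graph near $K_\eps$, and that is where the real work lies.
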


\begin{remark}
	The condition of convexity of~$M$ above can be weakened to that of the
	existence of exactly two points of~$M$ with a vertical tangent line. Indeed,
	using a smooth coordinate change that preserves vertical circles, any such
	curve can be made convex. On the other hand, such a change will not affect
	the conditions of local genericity.
\end{remark}
As it was mentioned above, the main tool of our study will be the Poincar\'e
map~$P_\eps$. In Section~\ref{ssec-poincare-map} three Lemmas (\ref{lem-shape},
\ref{lem-monot} and \ref{lem-conv}) are stated. They describe the behavior of the
Poincar\'e map as $\eps\to 0^+$. In section~\ref{ssec-canard-existence}, the Main
Theorem~\ref{thm-main} is proved modulo these Lemmas.

\subsection{Poincar\'e map}\label{ssec-poincare-map}
\begin{remark}\label{rem-dividing-out-g}

Note that the function $g$ is bounded away from zero, so we can divide the
system~\eqref{eq-main} by it, thus re-scaling the time: this does not change the
desired properties of its solutions, and the system with new function $f$ still
belongs to the same open set. Thus without loss of generality we can assume
$g=1$ in~\bref{eq-main} and consider the system:

\begin{equation}\label{eq-main-norm}
	\begin{cases}
		\dtx=f(x,y,\eps)\\
		\dty=\eps\\
	\end{cases}\quad (x,y)\in\bbT,\ \eps\in (\bbR,0)
\end{equation}
\end{remark}
Denote the graph of the Poincar\'e map $P_\eps$ by $\gaeps\subset S^1\times S^1$.
The following lemma shows that this graph looks more and more like the union of a
horizontal and a vertical circle as $\eps$ tends to $0^+$. In other words, the
derivative of the Poincar\'e map is (exponentially) small on the whole $\Gamma$
with the exception of an exponentially small interval.

\begin{lemma}[Shape Lemma]
\label{lem-shape}
There exist constants $c^{\pm}_{1,2}>0$ such that for any sufficiently small
${\eps>0}$ one can find two intervals $\Dep$ and $\Dem$ in the preimage and
image of $P_\eps$ resp. with the following properties: 

\begin{enumerate}
	\item $|\Depm|=O(e^{-c_1^{\pm}/\eps})$

	\item $\left.|P_\eps '|\right|_{S^1\setminus \Dep} = O(e^{-c_2^{+}/\eps})$

	\item $ \left.|(P_\eps^{-1})'|\right|_{S^1\setminus \Dem} = O(e^{-c_2^{-}/\eps})$
	\item The graph $\gaeps$ lies in the union of two orthogonal rings: $\Pep:=\Dep \times S^1$ and $\Pem:=S^1 \times \Dem$.
\end{enumerate}
\end{lemma}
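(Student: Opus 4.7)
The plan is based on the classical slow-fast decomposition. For $\eps>0$ small, trajectories of~\bref{eq-main-norm} starting on $\Gamma=\{y=-\pi\}$ fall into two sharply distinct classes. \emph{Generic} initial conditions produce trajectories captured by the attracting slow manifold $M_\eps^-$ of the stable branch $M^-$, which then drift along it until the right fold $G^-$, make a fast vertical jump, and return to $\Gamma$. \emph{Canard} initial conditions, by contrast, produce trajectories that track the repelling slow manifold $M_\eps^+$ near the left fold $G^+$ for a nontrivial stretch of slow time, and by Definition~\ref{def-canards} cross $J^+$. The existence and smoothness of the manifolds $M_\eps^\pm$, their extensions past the folds, and the local fold normal forms at $G^\pm$ are supplied by the results of Section~3.

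I would take $\Dep \subset \Gamma$ to be a small interval containing the preimages on $\Gamma$ of all canard trajectories, together with a thin buffer, and define $\Demn$ analogously on the image circle. The bound $|\Depm|=O(e^{-c_1^\pm/\eps})$ is the first key estimate and rests on the strong expansion rate $f_x>0$ along $M^+$, which is bounded away from zero by~\bref{eq-nondeg-nondeg}: tracing the family of canard trajectories backward in slow time along $M^+$, the transverse separation is multiplied by a factor $e^{+C/\eps}$, so the whole canard family, which occupies an $O(1)$ window on any transversal crossing $M^+$, collapses to an interval of length $O(e^{-C/\eps})$ on $\Gamma$.

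The contraction estimate~(2) is obtained by tracking a trajectory with $x_0 \in S^1 \setminus \Dep$ through the four phases of its first return: (a) fast approach from $\Gamma$ into a thin neighbourhood of $M_\eps^-$; (b) slow drift along $M_\eps^-$ up to the fold $G^-$; (c) passage through $G^-$ in the normal form of Section~3; (d) fast jump and return to $\Gamma$. The main exponential factor comes from phase~(b), via the variational equation $\dot\xi=f_x(x,y,0)\,\xi+O(\eps)$ with $f_x<0$ bounded away from zero along $M^-$; integrating over the slow time needed to traverse $M^-$ gives a multiplier $\exp(-C/\eps)$. Phases~(a), (c), (d) contribute only $O(1)$ factors, and a Distortion-Lemma argument upgrades the bound obtained along one reference trajectory to a pointwise bound valid uniformly in $x_0 \in S^1 \setminus \Dep$. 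Estimate~(3) then follows by a symmetric argument: the preimage $P_\eps^{-1}(y_0)$ of any $y_0 \in S^1 \setminus \Demn$ lies in $\Dep$, where $P_\eps$ is exponentially expanding by the same variational analysis, now applied along $M^+$ with $f_x>0$.

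Claim~(4) is then essentially tautological: for $x_0 \in \Dep$ one has $(x_0, P_\eps(x_0)) \in \Dep\times S^1 = \Pep$ by definition; for $x_0 \in S^1 \setminus \Dep$, estimate~(2) forces $P_\eps(x_0)$ to lie in the exponentially small interval $\Demn$, hence $(x_0, P_\eps(x_0)) \in S^1 \times \Demn = \Pem$. The principal technical obstacle will be the uniformity of the contraction and expansion estimates across the full preimage and image circles, as the variational equation yields such bounds only in an averaged sense along a single reference trajectory; this is precisely where the Distortion Lemma enters. A second delicate point is the flow during the passage through the folds $G^\pm$, where naive Fenichel-type estimates break down and one must work with the fold normal forms of Section~3.
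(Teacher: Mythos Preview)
Your overall architecture is close to the paper's, but there is one genuine gap and two places where you reach for heavier tools than the proof actually needs.

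\textbf{The gap: phases (a), (c), (d) are not $O(1)$.} Your claim that the fast-approach phase~(a) and the fold-passage phase~(c) contribute only bounded factors to $P_\eps'$ is false. A non-canard trajectory starting at $x_0\in S^1\setminus\Dep$ must still travel from $\Gamma$ up past the fold~$G^+$ before it is captured by~$M^-$; on this stretch the linearisation degenerates and the derivative of the transition map can be as large as $e^{c(\delpl)/\eps}$. The paper does \emph{not} try to make this $O(1)$. Instead it introduces the auxiliary cross-section $\Gapl=\{y=\tpl+\delpl\}$, proves the crude bound $Q_\eps'\le Ce^{c_2(\delpl)/\eps}$ with $c_2(\delpl)\to0$ as $\delpl\to0$ (Proposition~\ref{prop-q-est}, assertion~2, via Lemma~\ref{lem-robust}), and then observes that the contraction rate along $M^-$ in the map $\Ppm$ is bounded \emph{below} by a constant independent of~$\delpl$. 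Choosing $\delpl$ small enough makes the contraction win. Without this balancing of exponents your argument for assertion~2 does not close.

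\textbf{The Distortion Lemma is not used here.} You invoke it to upgrade a single-trajectory variational estimate to a uniform one, but the paper avoids this entirely: Theorem~\ref{thm-normnearss} puts the flow near $M^\pm$ into the \emph{linear} normal form $\dot x=a(y,\eps)x$, $\dot y=\eps$, in which the variational multiplier is identically $\exp\bigl(\eps^{-1}\int a\,dy\bigr)$ and uniformity is automatic. Away from the slow curve, Theorem~\ref{thm-nonlin-trans} gives uniformly bounded derivatives directly. The Distortion Lemma appears only much later (Section~\ref{ssec-Reps-estimate-proof}) for the sharper estimate of Theorem~\ref{thm-poincare-jump-est}, which feeds into the Convexity Lemma, not the Shape Lemma. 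Likewise, Section~3 contains no fold normal form; fold passage in the Shape Lemma is handled by nothing more than the rough bound of Lemma~\ref{lem-robust}.

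\textbf{Assertion~(3).} Your argument (``the preimage lies in $\Dep$, where $P_\eps$ is exponentially expanding along $M^+$'') is circular: a canard trajectory through $\Dep$ may spend comparable time near $M^+$ and $M^-$, so $P_\eps'$ there can be anything. The paper obtains~(3) simply by time reversal: apply the argument for~(2) to the system with $t\mapsto -t$, interchanging the roles of $M^+$ and $M^-$, of $\Dep$ and $\Dem$, and of $P_\eps$ and $P_\eps^{-1}$.
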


Lemma~\ref{lem-shape} is proved in Section~\ref{ssec-shape}. The next lemma
formalizes the following observation: the lift  of $\gaeps$ to the fundamental
domain $\{|x|<\pi, |y|<\pi\}$ moves monotonically to the upper left as $\eps\to
0^+$. To formalize this, we need to introduce some additional notation.

Consider arbitrary points~$a$ and~$b$ on the oriented circle~$S^1$. They split
the circle into two arcs. Denote the arc from point $a$ to point $b$ (in the
sense of the orientation of the circle) by~$\rarc{a}{b}$. The orientation of this
arc is induced by the orientation of the circle. Also denote the same arc with
the inversed orientation by~$\larc{a}{b}$. (See fig.~\ref{fig-arcs}.)

\begin{figure}[htb]
	\psfrag{a}{$a$}
	\psfrag{b}{$b$}
	\psfrag{S1}{$S^1$}
	\psfrag{larcab}{$\larc{a}{b}$}
	\psfrag{rarcab}{$\rarc{a}{b}$}
	\psfrag{larcba}{$\larc{b}{a}$}
	\psfrag{rarcba}{$\rarc{b}{a}$}

	\begin{center}
		\includegraphics{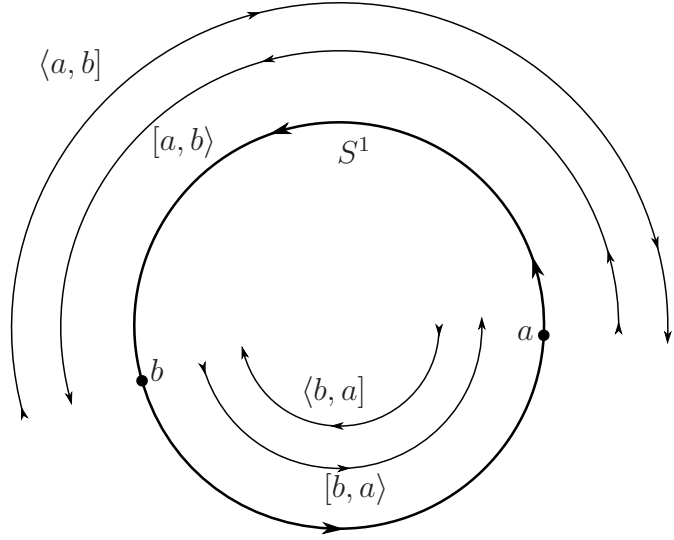}
		\caption{Orientation of the arcs}\label{fig-arcs}
	\end{center}
\end{figure}

Denote the Poincar\'e map along the phase curves of the main system~\bref{eq-main}
from the cross-section~$y=a$ to the cross-section~$y=b$ in the forward time (i.e.
along the arc $\rarc{a}{b}$) by $\rPeab{a}{b}$. Also, let
$\lPeab{b}{a}=\inv{\rPeab{a}{b}}$: this is the Poincar\'e map from the
cross-section $y=b$ to the cross-section $y=a$ in the reverse time. This fact is
stressed by the notation: the direction of the angle bracket shows the time
direction.

Hereafter any formula containing a $\pm$ or $\mp$ sign replaces two formulae:
one with all the upper sign and another with all the lower sign.

Let the jump points $G^\pm$ have coordinates $(\spm,\tpm)$ resp., (the slow
curve $M$ lies inside the strip~$\{\tpl\le y\le \tmn\}$ due to its convexity,
see Fig.~\ref{fig-general-view}).  For some fixed small positive $\delpl$ and
$\delmn$ we define the following objects:

\begin{enumerate}
		\item Transversal cross-sections $\Gapm$, intersecting the slow curve $M$
			near the jump points:
		\begin{equation*}
			\Gapm:=S^{1} \times \{\alpm\},
		\end{equation*}
		where $\alpm=\tpm \pm \delpm$. (See Fig.~\ref{fig-general-view}.)
		\item Segments $\Jpl$ and $\Jmn$ of the cross-sections $\Gapm$, which
			intersect unstable and stable parts of the slow curve resp.:
		\begin{gather*}
			\Jpl:=\{(x,\alpl) \in \Gapl \mid x \in [\spl,\pi]\},\\
			\Jmn:=\{(x,\almn) \in \Gamn \mid x \in [-\pi, \smn]\}.\\
		\end{gather*}
	\item Segments $\Depm$ (whose existence is provided by the Shape
		Lemma~\ref{lem-shape}):
		\begin{gather*}
			\Dep:=\lPeab{\alpl\!}{-\pi}(\Jpl), \quad \Dem:=\rPeab{\almn\!}{\,\pi}(\Jmn),\quad \Depm=\rarc{\pepm}{\qepm},
		\end{gather*}
\end{enumerate}

The choice of segment~$\Jpl$ here formally completes Definition~\ref{def-canards}
that used this interval. Note that all trajectories that intersect segment $\Dep$ also 
intersect $\Jpl$ and thus are canards.

Let $\Bepl$ and $\Bemn$ be the points on the graph $\gaeps$, where the slope of
$P_\eps$ is equal to $1$. As it will be shown below (see
Lemma~\ref{lem-conv}) there are exactly two such points. From the Lemma
~\ref{lem-shape} it follows that the points $\Bepm$ lie in the rectangle
$K_\eps:=\Dep\times\Dem$, because outside of this rectangle the derivative of
$P_\eps$ is either very small or very big. For the sake of
definiteness, we will assume that 
$\rarc{x(\Bemn)}{x(\Bepl)}\subset \Dep$: let us denote this as
$x(\Bemn)<x(\Bepl)$. 

\begin{remark}\label{rem-B-relations}
The map $P_\eps$ preserves orientation (i.e. monotonic) and is bijective, so
$\rarc{y(\Bepl)}{y(\Bemn})\subset \Dem$ (i.e.  $y(\Bemn)>y(\Bepl)$).
\end{remark}
Let us denote by $\Aepl$ and $\Aemn$ the points of the graph $\gaeps$ which lie
above the ends of the segment $\Depl$ (see figure~\ref{fig-poincare-graph}):

\begin{equation*}
	\Aemn:=(\pepl,P_\eps(\pepl)),\quad \Aepl:=(\qepl,P_\eps(\qepl)),
\end{equation*}
Denote also the top left and bottom right (in the sense of orientations of
coordinate circles) corners of the rectangle $K_\eps$ by $\Eemn$ and $\Eepl$
resp.:

\begin{equation*}
	\Eemn:=(\pepl,\qemn),\quad \Eepl:=(\qepl,\pemn)
\end{equation*}

Let $C_\eps$ stand for either of the points $\Bepm$, $\Aepm$ or $\Eepm$ and
let $\bar{C}_\eps$ be its lift to the universal cover depending continuosly 
on $\eps$.

\begin{lemma}[Monotonicity lemma]\label{lem-monot}

The following assertions hold:

\begin{enumerate}
	\item $\dde(x-y)(\bar{C}_\eps) \to+\infty\text{ as } \eps\to 0^+$, for any choice $\bar{C}_\eps=\Aepmb;\ \Eepmb$

	\item The equation $(y-x)(\Eemnb)= 2\pi n$ has solution  $\eps=\eps_n$ for any~$n$, and $\eps_n=O(1/n)$. 
\end{enumerate}
\end{lemma}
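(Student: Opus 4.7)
My approach for Part~1 is to compute $\frac{d}{d\eps}(x-y)(\bar C_\eps)$ directly by expressing each $\bar C_\eps$ via an integral representation along a trajectory of~\bref{eq-main-norm} and then differentiating. By Remark~\ref{rem-dividing-out-g} I work in the normalized form, in which $\dot y=\eps$, so each trajectory can be reparameterized by $y$ via $dx/dy = f(x,y,\eps)/\eps$. The key observation is that each of the four points $\bar A_\eps^\pm$, $\bar E_\eps^\pm$ has coordinates on the universal cover obtained, continuously in $\eps$, by transporting one of the fixed endpoints $(s^\pm,\alpha^\pm)$ or $(\pm\pi,\alpha^\pm)$ of $J^\pm$ forward or backward along a trajectory of~\bref{eq-main-norm} to the global section~$\Gamma$. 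For instance,
$$\bar p_\eps^+ = s^+ - \frac{1}{\eps}\int_{-\pi}^{\alpha^+}\! f(x(y;\eps),y,\eps)\,dy,\qquad \bar P_\eps(\bar p_\eps^+) - \bar p_\eps^+ = \frac{1}{\eps}\int_{-\pi}^{\pi}\! f\,dy,$$
where $x(\cdot;\eps)$ is the solution of $dx/dy = f/\eps$ passing through $(s^+,\alpha^+)$; analogous formulas hold for $\bar q_\eps^\pm$ and for the $y$-coordinates of $\bar A_\eps^\pm$.

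Differentiating such an integral in $\eps$ yields the decomposition
$$\frac{d}{d\eps}\!\left[\frac{1}{\eps}\int\! f\,dy\right] = -\frac{1}{\eps^2}\int\! f\,dy + \frac{1}{\eps}\int\bigl[f_x\,\partial_\eps x + f_\eps\bigr]dy.$$
The ``explicit'' first term provides the leading contribution. Sign conventions~\bref{eq-nondeg-main-wlog} ensure $f>0$ throughout the ``free'' regions $y<\tau^+$ and $y>\tau^-$ where the slow curve is absent; hence each of the integrals $\int_{-\pi}^{\alpha^+} f\,dy$, $\int_{\alpha^-}^{\pi} f\,dy$, and $\int_{-\pi}^{\pi} f\,dy$ along the relevant trajectories is bounded below by a positive constant independent of~$\eps$. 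A straightforward accounting of signs for each of the four points $\bar A_\eps^\pm$, $\bar E_\eps^\pm$ shows that the explicit contribution to $d(x-y)(\bar C_\eps)/d\eps$ is positive and of order $1/\eps^2$.

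The main technical obstacle will be showing that the variational correction $\frac{1}{\eps}\int[f_x\partial_\eps x+f_\eps]\,dy$ is genuinely subdominant. On the slow pieces of each trajectory (tracking the attracting $M^-$, or traversing the repelling $M^+$ in canard mode for the trajectory through $(s^+,\alpha^+)$), Fenichel-type contraction estimates bound $\partial_\eps x$ uniformly in~$\eps$, making the slow-phase contribution $O(1/\eps)$. On the free arcs the variational ODE $dv/dy = (f_x/\eps)v + (f_\eps/\eps - f/\eps^2)$ allows $\partial_\eps x$ to grow, but the resulting contribution to the correction integral — naively of order $1/\eps^2$ — admits an internal cancellation between the $f_x\partial_\eps x$ and $f/\eps$ pieces (inherited from the identity $\int f\,dy = \eps\,\Delta x$ along a trajectory), reducing it to order $1/\eps$. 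In small neighborhoods of the folds $G^\pm$ the normal-form theorems of Section~3 replace the flow by an explicit model whose $\eps$-dependence can be estimated directly. Assembling these bounds gives $d(x-y)(\bar C_\eps)/d\eps = c\,\eps^{-2}(1+o(1))$ with $c>0$, which proves Part~1.

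Part~2 will then follow immediately: Part~1 implies that $\eps\mapsto (y-x)(\bar E_\eps^-)$ is strictly decreasing on $(0,\eps_0]$, while the integral representation above shows $(y-x)(\bar E_\eps^-) \asymp 1/\eps$ as $\eps\to 0^+$. The intermediate value theorem then supplies a unique solution $\eps_n$ of $(y-x)(\bar E_\eps^-)=2\pi n$ for each sufficiently large~$n$, and the $1/\eps$ scaling forces $\eps_n = O(1/n)$.
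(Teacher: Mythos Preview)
Your overall route coincides with the paper's: both reduce the lemma to the behaviour of the lifted forward and backward Poincar\'e maps $\bar R_\eps^\pm(x_0;y_0)$ from a fixed point on $\Gamma^\pm$ to~$\Gamma$. The paper packages the needed facts as Proposition~\ref{prop-monot} ($\bar R_\eps^\pm=\pm C^\pm/\eps+O(1)$ and $d\bar R_\eps^\pm/d\eps\to\mp\infty$), cites~\cite{GI} for its proof, and then writes each $(x-y)(\bar C_\eps)$ as a difference $\bar R_\eps^-(\,\cdot\,)-\bar R_\eps^+(\,\cdot\,)$ as in~\eqref{eq-x-y-est}. Your integral representation plus $\eps$-differentiation is an attempt to prove Proposition~\ref{prop-monot} directly.

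The genuine gap is on the free arcs. Your splitting
\[
\frac{d}{d\eps}\Bigl[\tfrac{1}{\eps}\!\int f\,dy\Bigr]=-\tfrac{1}{\eps^2}\!\int f\,dy+\tfrac{1}{\eps}\!\int[f_x\partial_\eps x+f_\eps]\,dy
\]
is of course correct, but the assertion that the second term is $O(1/\eps)$ is false. Take the toy case $f=f(x)$: then $x(y;\eps)=X((y-\alpha^+)/\eps)$ with $\dot X=f(X)$, and one computes directly $\partial_\eps\bar p_\eps^+=f(\bar p_\eps^+)\,(\alpha^++\pi)/\eps^2$, whereas your ``explicit'' term equals $\Delta x/\eps\sim 2\pi(\alpha^++\pi)/(\eps^2\bar T)$ with $\bar T=\int_0^{2\pi}dx/f$. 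These differ at order $1/\eps^2$ unless $f$ is constant, so the correction is \emph{not} lower order; your ``internal cancellation inherited from $\int f\,dy=\eps\Delta x$'' is actually a tautology (it just restates $\partial_\eps\Delta x=\partial_\eps\Delta x$) and gives no independent control. The conclusion of Part~1 happens to survive in the toy case because $f>0$, but your argument does not establish it, and your sharper claim $d(x-y)/d\eps=c\,\eps^{-2}(1+o(1))$ with fixed $c$ is simply wrong --- the coefficient oscillates. The clean fix, and essentially what \cite{GI} does, is to invoke Theorem~\ref{thm-nonlin-trans} on each free arc: the lifted map is $\bar G_\eps^1(\bar G_\eps^2(x_0)+T(\eps))$ with $G_\eps^{1,2}$ converging in~$C^1$ and $T(\eps)\sim C/\eps$, whence its $\eps$-derivative is $(G_\eps^1)'\cdot T'(\eps)+O(1)\to\pm\infty$ with the correct sign. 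With that in place, your reduction for Part~2 is fine and matches the paper's.
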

\begin{remark}
	The second assertion of the Lemma implies that for ${\eps=\eps_n}$ the diagonal $$\Delta:=\{y=x\pmod{2\pi\bbZ}\}$$ crosses top left corner of the rectangle~$K_\eps$.
\end{remark}

Lemma~\ref{lem-monot} is proved in Section~\ref{ssec-monot}.

The following lemma describes the graph of the Poincar\'e map near the
points~$\Bepm$.  Denote by $U$ the set of points on $\Gamma$ where the
derivative of the Poincar\'e map is close to 1:

\begin{equation}
	U:=\{x\in S^1 \mid P'_\eps(x) \in [1/2, 2]\}
\end{equation}

\begin{lemma}[Convexity Lemma]
	\label{lem-conv}
	The set $U$ consists of two arcs contained in $\Dep$. On one of them the derivative $P'_\eps$ increases and on the other it decreases.
\end{lemma}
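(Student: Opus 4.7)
The plan is to show that $\log P_\eps'$, viewed as a function on $S^1$, is exponentially small outside $\Depl$ and strictly unimodal on $\Depl$ with a unique interior maximum of size $\sim 1/\eps$. Once this is established, $U=\{P_\eps'\in[1/2,2]\}$ is contained in $\Depl$ and is the preimage of $[\log(1/2),\log 2]$ under a strictly unimodal function whose maximum exceeds $\log 2$; it therefore consists of exactly two disjoint arcs, one on which $P_\eps'$ strictly increases from $1/2$ to $2$ and another on which it strictly decreases from $2$ to $1/2$.

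The exponential smallness outside $\Depl$ is immediate from the Shape Lemma. For the unimodality, start from the variational formula
\begin{equation*}
\log P_\eps'(x_0) = \int_0^{T(x_0)} f_x\bigl(x(t), y(t), \eps\bigr)\,dt = \frac{1}{\eps}\int_{-\pi}^{\pi} f_x\bigl(x(y;x_0), y, \eps\bigr)\,dy,
\end{equation*}
and analyze how the trajectory depends on $x_0\in\Depl$. Since $\Depl$ is the exponentially small preimage of the macroscopic segment $J^+$ under a highly expanding Poincar\'e map, as $x_0$ sweeps through $\Depl$ the trajectory's $x$-coordinate at $y=\alpl$ sweeps across $J^+$; in particular, there is a unique point $x_0^*\in\Depl$ whose trajectory passes exactly through the intersection of $J^+$ with $M^+$, i.e., whose trajectory follows $M^+$ for the longest time. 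Introduce the canard parameter $y_*(x_0)\in(\tpl,\tmn]$, namely the $y$-level at which the trajectory of $x_0$ leaves an exponentially thin neighborhood of $M^+$: then $y_*(x_0^*)=\tmn$, and moving away from $x_0^*$ in either direction the trajectory leaves $M^+$ earlier, so $y_*(x_0)$ is strictly unimodal in $x_0$, with apex at $x_0^*$.

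After leaving $M^+$ at $y_*$, whether the trajectory drops directly onto $M^-$ or first escapes upward and wraps around the torus, it lands on $M^-$ essentially at the same $y$-level (the wrap-around transition takes $O(\eps)$ in~$y$) and then tracks $M^-$ up to the right fold $G^-$. Combining this with the normal-form analysis at $G^\pm$ (Section~3) to handle the boundary layers, one obtains uniformly in $x_0\in\Depl$
\begin{equation*}
\eps\log P_\eps'(x_0) = \int_{\tpl}^{y_*(x_0)} f_x(M^+(y), y, 0)\,dy + \int_{y_*(x_0)}^{\tmn} f_x(M^-(y), y, 0)\,dy + O(\eps|\log\eps|).
\end{equation*}
The $y_*$-derivative of the right-hand side is $f_x(M^+(y_*),y_*,0) - f_x(M^-(y_*),y_*,0)$, which is strictly positive since $f_x>0$ on $M^+$ and $f_x<0$ on $M^-$. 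Hence $\log P_\eps'$ is the composition of a strictly increasing function of $y_*$ with the strictly unimodal function $y_*(x_0)$, and is itself strictly unimodal on $\Depl$. At the apex $x_0=x_0^*$,
\begin{equation*}
\eps\log P_\eps'(x_0^*) \to \int_{\tpl}^{\tmn} f_x(M^+(y),y,0)\,dy > 0,
\end{equation*}
so $\log P_\eps'(x_0^*)\sim 1/\eps$, far exceeding $\log 2$ for small $\eps$.

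The main obstacle will be the rigorous derivation of the asymptotic above uniformly in $x_0$, together with a uniform estimate on the wrap-around contribution through the torus's fast region; both require the normal-form tools of Section~3 near the jump points $G^\pm$, in particular to show that the divergence of the trajectory from $M^+$ is controlled on both sides of $x_0^*$ by the same unimodal parameter $y_*(x_0)$. The convexity of the slow curve~$M$ is used implicitly to ensure that this picture — a single maximal canard $x_0^*$ accompanied by a single coherent fast-region wrap — is the whole source of non-exponentially-small $P_\eps'$ on $\Depl$, precluding extra critical points of $\log P_\eps'$ and hence any additional arcs in $U$.
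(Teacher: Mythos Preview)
Your heuristic picture matches the paper's own ``heuristic motivation'' (Section~4.2.1) almost verbatim, and the parameter $y_*(x_0)$ is exactly the paper's $y^+$. But the passage from your asymptotic formula to strict unimodality has a genuine gap. You write
\[
\eps\log P_\eps'(x_0) = \int_{\tpl}^{y_*(x_0)} f_x(M^+,y,0)\,dy + \int_{y_*(x_0)}^{\tmn} f_x(M^-,y,0)\,dy + O(\eps|\log\eps|)
\]
and then conclude that $\log P_\eps'$ is the composition of a strictly increasing function of $y_*$ with a unimodal $y_*(x_0)$. This inference is not valid: the error term is only controlled in $C^0$, and on $U$ itself the left-hand side is $O(\eps)$ while your claimed remainder is $O(\eps|\log\eps|)$, so the remainder \emph{dominates} the signal precisely where you need it. A $C^0$ asymptotic with error of this size says nothing about monotonicity or the number of critical points; a $C^1$ (really second-derivative) estimate is what is required, and you have not provided one. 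Moreover, the paper's careful analysis near the fold (its Theorem on the derivative of $R_\eps$) only yields an $O(\eps^\lambda)$ error for some $\lambda>0$, not $O(\eps|\log\eps|)$, so your stated remainder is also too optimistic.

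The paper avoids this trap by not attempting global unimodality on $\Depl$ at all. It first proves (its Lemma~\ref{lem-yptom-est}) that for $u\in U$ the exit level $y^+$ is pinned to a fixed value $\yptom$ up to $O(\eps^\lambda)$; this uses the balance $C^+(\delpl)=C^-(\delmn)$ coming from the fold estimates. Then, working in the normalizing charts $\xi,\eta$ on $J^\pm$, it computes $\eta(\xi)$ explicitly, differentiates, and shows directly that
\[
\left|\frac{d}{du}\log(P_+^-)'\circ Q_\eps(u)\right| > \exp\!\frac{I}{5\eps}
\]
on $U$, with sign determined by whether the trajectory jumps up or down from $M^+$. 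This exponentially large second-derivative term dominates the bounded contributions $\frac{d}{du}\log Q_\eps'$ and $\frac{d}{du}\log\wtQ_\eps'$, giving the sign of $\frac{d}{du}\log P_\eps'$ on each of the two jump regimes. That is the missing ingredient: a direct derivative estimate, not a $C^0$ expansion.
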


In particular, it follows that there exist exactly two solutions of the equation
$P'_\eps(x)=1$. Lemma~\ref{lem-conv} is proved in the Section~\ref{ssec-conv}.

Now we can prove Main Theorem~\ref{thm-main} modulo these lemmas.

\subsection{Existence of canard solutions}\label{ssec-canard-existence}
In this section we deduce Main Theorem~\ref{thm-main} from 
Lemmas~\ref{lem-shape}, \ref{lem-monot} and \ref{lem-conv}. 

\subsubsection{Heuristic ideas}

We notice that the points of intersection of the graph $\gaeps$ and the diagonal
$\Delta:=\{y=x \pmod {2\pi\bbZ}\}$ correspond to the fixed points of the
Poincar\'e map and therefore to the closed solutions of the system. The stability of
the corresponding cycle depends on the derivative of the Poincar\'e map at the
fixed point: if the derivative is greater than $1$, then the cycle is unstable,
if it is less then $1$, then the cycle is stable. We are particulary interested in
the fixed points that belong to the segment~$\Dep$ because they correspond to
the closed canard solutions. 

Lemmas~\ref{lem-shape} and~\ref{lem-monot} describe  behavior of the system as
${\eps\to 0^+}$. The graph $\gaeps$ lies inside the union of horizontal and
vertical rings, which are exponentially thin when $\eps\to 0^+$. The graph moves
from the lower right corner to the upper left. Theoretically, the following
cases for relative positions of the diagonal $\Delta$ and the graph $\gaeps$ are
possible (Fig.~\ref{fig-poincare-graph}):

1. The diagonal~$\Delta$ does not intersect the rectangle $K_\eps$. In this case
it intersects $\gaeps$ in two points, one of them belongs to the ring $\Pep$,
and corresponds to an unstable fixed point, and the other belongs to the ring
$\Pem$ and corresponds to the stable fixed point from~$\Gamma\setminus\Dep$. 

2. The diagonal~$\Delta$ intersects the rectange~$K_\eps$, but the stable fixed
point is located outside $\Dep$ like in the previous case. (Note that the
position of the unstable fixed point is not significant for our analysis.) 

3. The diagonal~$\Delta$ intersects~$\gaeps$ in two points, and the point of
interstection which corresponds to the stable fixed point belongs to the
rectangle $K_\eps$. In this case this point corresponds to the stable canard
cycle, which provides its existence in Main Theorem,~\ref{thm-main}.  Uniqueness of
the cycle follows from Convexity Lemma~\ref{lem-conv}. 

4. The diagonal~$\Delta$ touches~$\gaeps$ in one of the points~$\Bepm$.

5. The diagonal~$\Delta$ does not intersect~$\gaeps$.

From Lemma~\ref{lem-monot} and the fact that the graph $\gaeps$ depends on
$\eps$ continuosly it follows that when $\eps\to 0^+$, the described cases occur
consecutively and cyclically, in the following order: 1, 2, 3, 4, 5, 4, 3, 2, 1,
\ldots Values of $\eps$ that correspond to case 3 form intervals $C_n^\pm$  mentioned in Main Theorem~\ref{thm-main}.

Later we will prove these propositions strictly.

\begin{figure}
	\psfrag{Dem}{$\Dem$}
	\psfrag{Pepl}{$\Pep$}
	\psfrag{Delta}{$\Delta$}
	\psfrag{Pemn}{$\Pem$}
	\psfrag{Dep}{$\Dep$}
	\psfrag{ps}{$p_s$}
	\psfrag{pu}{$p_u$}
	\psfrag{Epl}{$\Eepl$}
	\psfrag{Emn}{$\Eemn$}
	\psfrag{Aepl}{$\Aepl$}
	\psfrag{Aemn}{$\Aemn$}
	\psfrag{Bpl}{$\Bepl$}
	\psfrag{Bmn}{$\Bemn$}
	\begin{center}
		\begin{tabular}{cc}
			Case $1$ & Case $2$\\
			\includegraphics[scale=0.5]{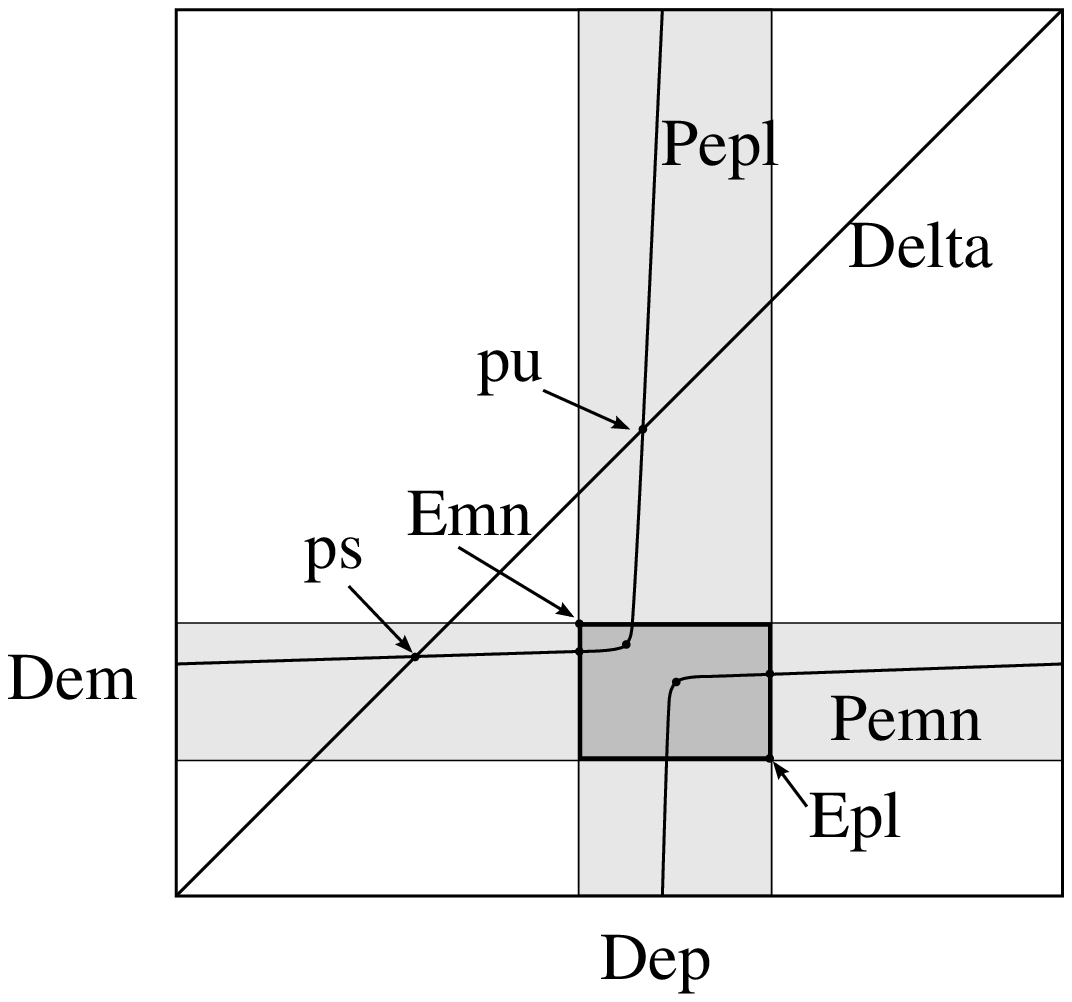}
			&
			\includegraphics[scale=0.5]{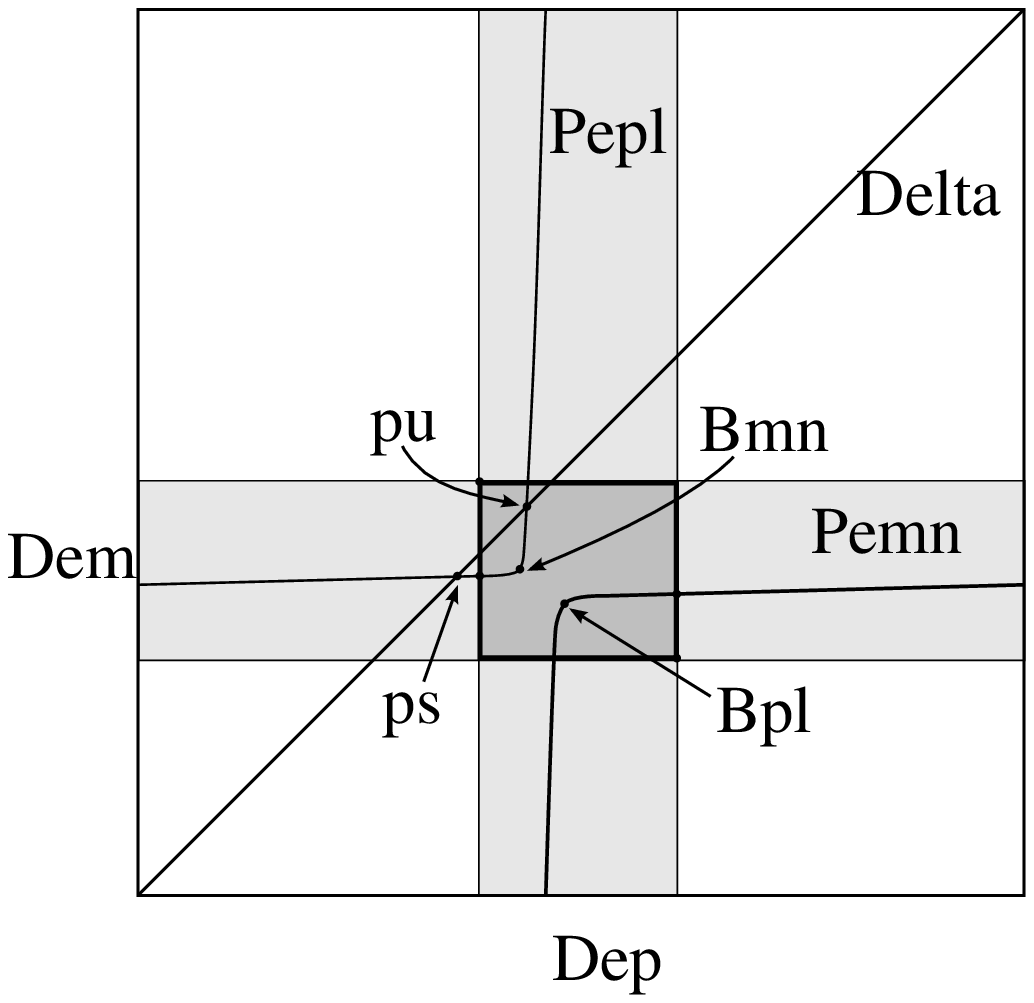}\\
			Case $3$ & Case $4$\\
			\includegraphics[scale=0.5]{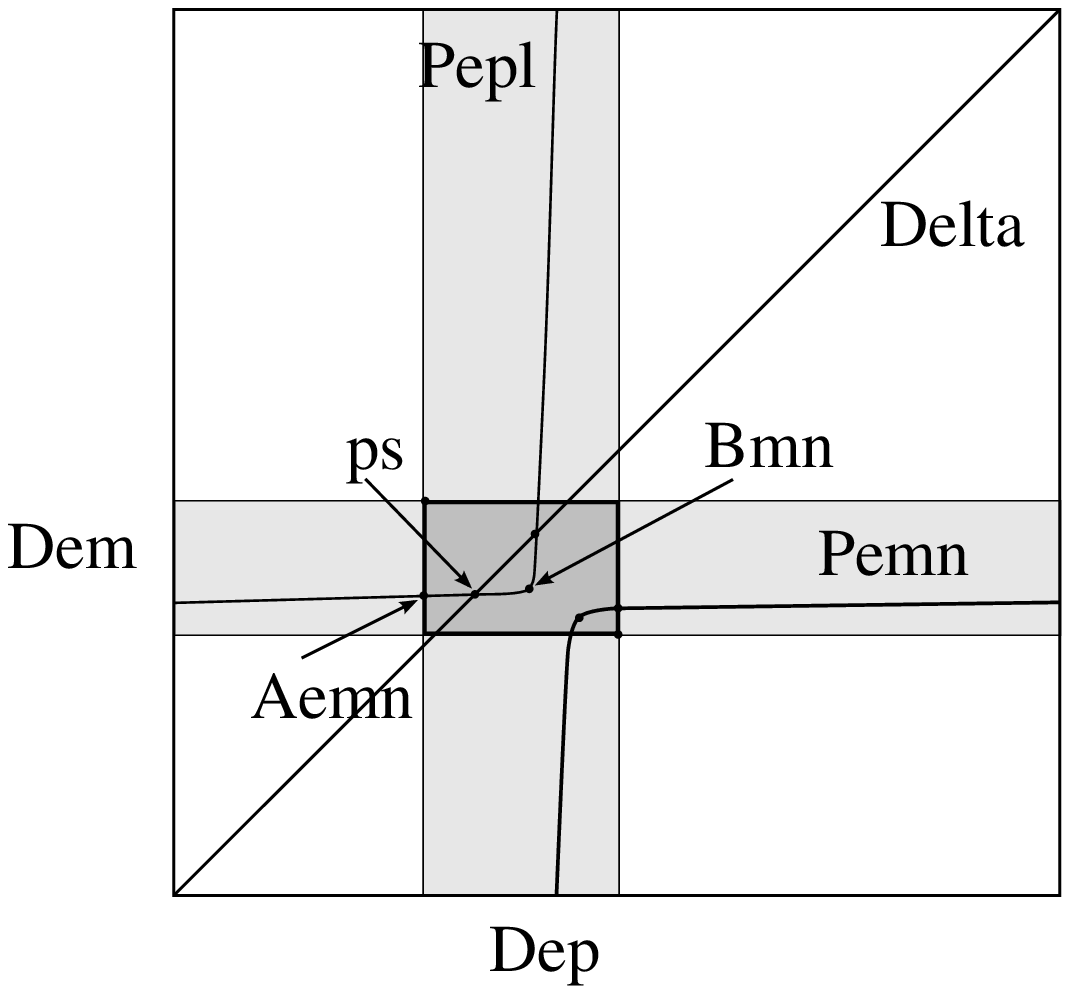}
			&
			\includegraphics[scale=0.5]{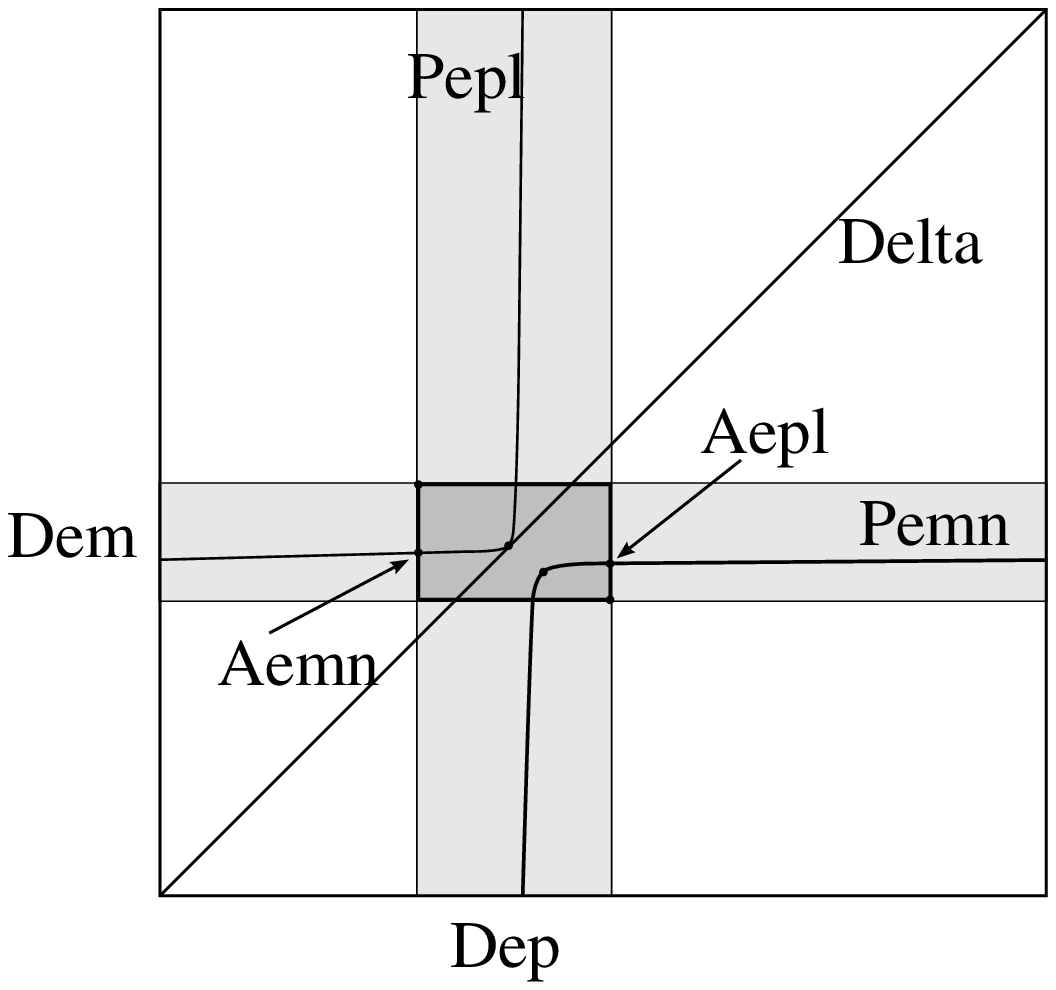}\\
			Case $5$ & Case $4'$\\
			\includegraphics[scale=0.5]{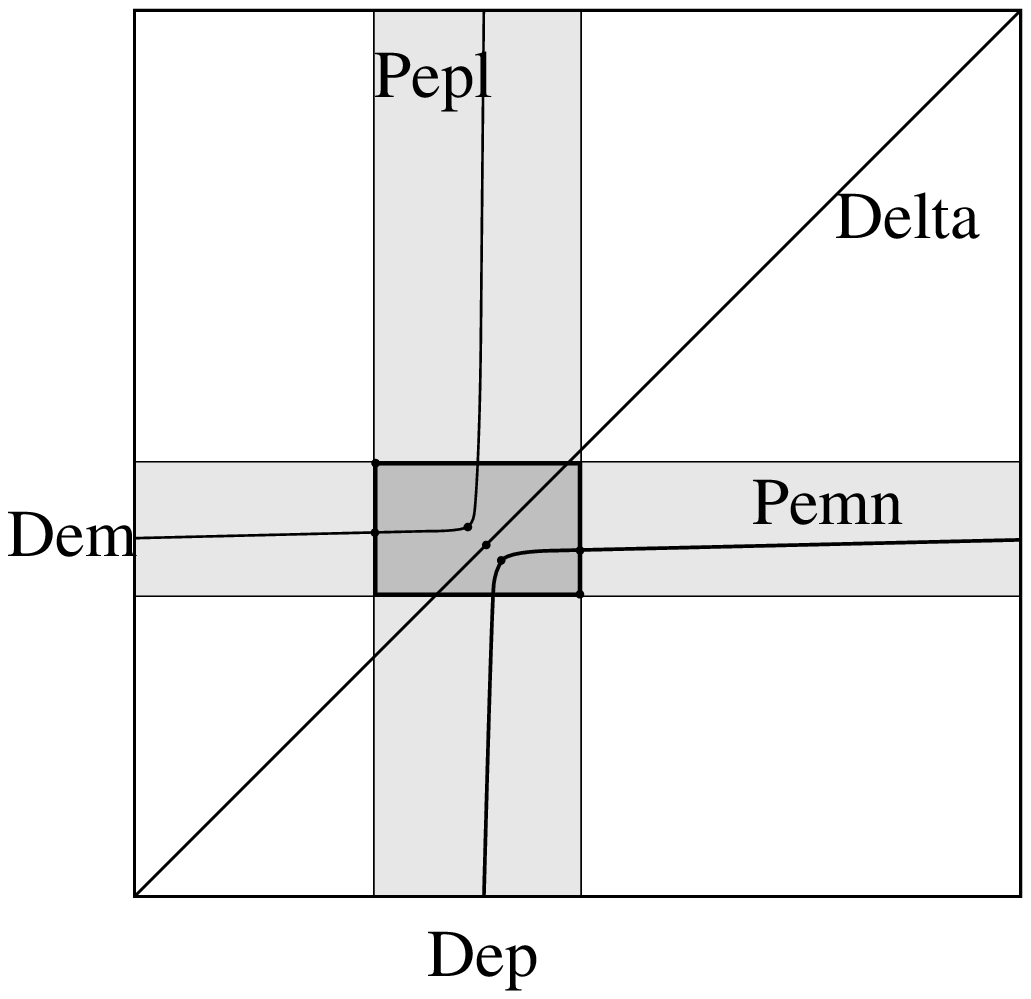}
			&
			\includegraphics[scale=0.5]{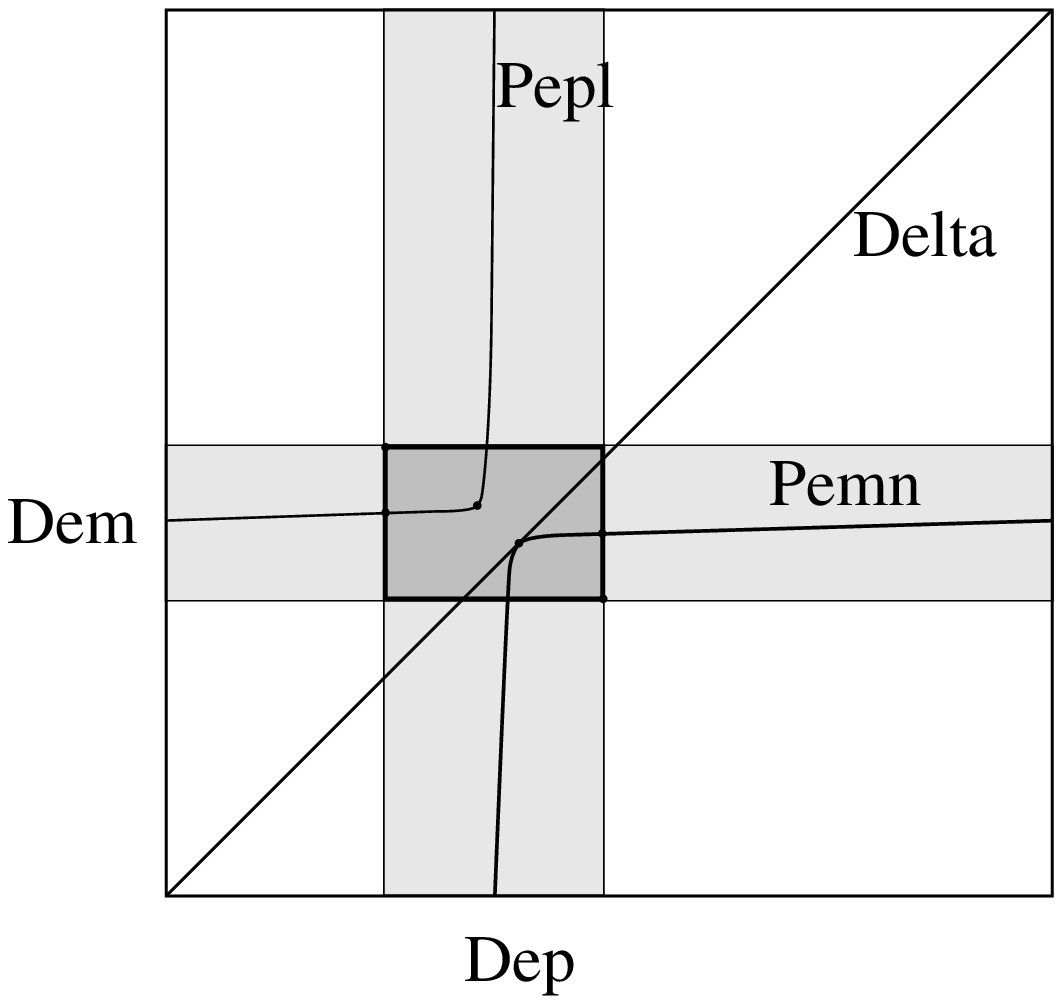}
		\end{tabular}
		\caption{The graph of the Poincar\'e map. As $\eps\to 0^+$ cases 1, 2, 3, 4,
		5 occur consecutively, then case 4 takes place again (denoted by ``case
		$4'$''), then cases 3, 2, 1 (not shown), after which they repeat cyclically.
		}\label{fig-poincare-graph}
	\end{center}
\end{figure}

\subsubsection{Segments on the line of $\eps$}
\begin{proof}[Proof of Main Theorem~\ref{thm-main}]
Define the segments~$R_n$ in the following way: 

\begin{equation}
	R_n:=\{\eps\in(\bbR_+,0) \mid (y-x)(\Eeplb) \le 2 \pi n \le (y-x)(\Eemnb)\}=[\alpha_n,\beta_n]
\end{equation}

In other words, $\eps \in R_n$ when the diagonal~$\Delta$
crosses the rectangle~$K_\eps$. Let us show that in this case~$R_n$
satisfies assertions 1 and 2 of Theorem~\ref{thm-main}.

Assertion 1 of Monotonicity Lemma~\ref{lem-monot} implies that $R_n$ is actually
a segment (as a preimage of a segment under a continuos monotonic map).  Moreover,
$\eps=\alpha_n$ is a solution of the equation $(y-x)(\Eeplb)=2\pi n$ with
respect to $\eps$; according to assertion 2 of Lemma~\ref{lem-monot},
$\alpha_n=O(1/n)$, which proves assertion 2 of the theorem.

Let us estimate the length of the segment~$R_n$, showing
that~$|R_n|=O(e^{-C/\eps})$ for some~$C>0$. Assertion 1 of the Shape
Lemma~\ref{lem-shape} implies, that 

\begin{multline}
|(y-x)(\Eeplb)-(y-x)(\Eemnb)| \le 
 |x(\Eeplb)-x(\Eemnb)|+|y(\Eeplb)-y(\Eemnb)| =\\=
 |\Dep|+|\Dem|=
 O(e^{-C'/\eps})=O(e^{-Cn})
\end{multline}
Together with assertion 1 of the Monotinicity Lemma and the mean value theorem
it implies the required estimate for $|R_n|$.  Hence assertions 1 and 2 of
theorem~\ref{thm-main} are proved for the selected $R_n$.

Let us show that if $\eps$ does not belong to any~$R_n$ then we are in the
domain of case 1 from the previous section. This will imply assertion 3 of 
Theorem~\ref{thm-main}.

\begin{proposition}
	Let~$K_\eps\cap \Delta = \varnothing$. Then the rotation number of $P_\eps$
	is an integer (to be more exact, $\rho(\eps)=0 \pmod {2\pi\bbZ}$)
	and $P_\eps$ has exactly two hyperbolic fixed points (one of them attracting and
	the other one repelling). 
\end{proposition}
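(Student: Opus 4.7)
The plan is to exhibit exactly two fixed points of $P_\eps$—one hyperbolic attractor in $\Dem$ and one hyperbolic repeller in $\Dep$—and then deduce the statement about the rotation number from the mere existence of a fixed point. The key preliminary observation is that the hypothesis $K_\eps\cap\Delta=\varnothing$ on the torus is simply $\Dep\cap\Dem=\varnothing$ as subsets of $S^1$, so the two arcs are disjoint; in particular $\Dep\subset S^1\setminus\Dem$ and $\Dem\subset S^1\setminus\Dep$. Combining this with assertion 4 of the Shape Lemma~\ref{lem-shape}, for any $x\in S^1\setminus\Dep$ the point $(x,P_\eps(x))$ of $\gaeps$ lies outside $\Pep$, hence in $\Pem$, so $P_\eps(x)\in\Dem$; dually, for any $y\in S^1\setminus\Dem$ one has $P_\eps^{-1}(y)\in\Dep$.

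First I would construct the attracting fixed point. Since $\Dem\subset S^1\setminus\Dep$, the observation above gives $P_\eps(\Dem)\subset\Dem$, so $P_\eps$ restricts to a self-map of the closed arc $\Dem$. On this arc, assertion 2 of the Shape Lemma gives $|P_\eps'|=O(e^{-c_2^+/\eps})<1$ for $\eps$ sufficiently small, so the restriction is a strict contraction. The Banach fixed-point theorem then yields a unique fixed point $p_s\in\Dem$ with $P_\eps'(p_s)\in(0,1)$, i.e.\ a hyperbolic attracting fixed point. The same argument applied to $P_\eps^{-1}$ on $\Dep$ (which is a self-map by the dual inclusion, and a contraction by assertion 3 of the Shape Lemma since $\Dep\subset S^1\setminus\Dem$) produces a unique fixed point $p_u\in\Dep$ of $P_\eps^{-1}$, hence of $P_\eps$, with $P_\eps'(p_u)=1/(P_\eps^{-1})'(p_u)$ exponentially large, so $p_u$ is hyperbolic repelling.

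Finally, any fixed point of $P_\eps$ lies either in $\Dep$, in which case it coincides with $p_u$ by uniqueness, or in $S^1\setminus\Dep$, in which case $x=P_\eps(x)\in\Dem$ forces $x=p_s$. Thus $P_\eps$ has exactly two fixed points on $S^1$; together with the fact that an orientation-preserving circle diffeomorphism has integer rotation number as soon as it possesses a fixed point, this yields $\rho(\eps)\in 2\pi\bbZ$. I do not anticipate any serious obstacle—the argument is a clean application of the Shape Lemma's derivative bounds together with Banach's contraction principle—but one must verify simultaneously that the relevant arcs are invariant under $P_\eps$ (respectively $P_\eps^{-1}$) \emph{and} that they lie in the region where the exponential derivative bounds from assertions 2 and 3 apply. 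The disjointness $\Dep\cap\Dem=\varnothing$ is precisely what makes both of these happen at once.
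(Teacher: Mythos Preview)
Your proof is correct and is actually cleaner than the paper's. The key reduction you make---that $K_\eps\cap\Delta=\varnothing$ on the torus is equivalent to $\Dep\cap\Dem=\varnothing$ in $S^1$---is exactly right, and once the two arcs are disjoint, assertion~4 of the Shape Lemma gives you the invariance $P_\eps(\Dem)\subset\Dem$ and $P_\eps^{-1}(\Dep)\subset\Dep$, while assertions~2 and~3 give contraction on those same arcs. Banach's fixed-point theorem then does all the work, and your case split for uniqueness is watertight.

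The paper takes a different, more geometric route: it completes the portion of the graph $\gaeps$ lying over $S^1\setminus\Dep$ to a closed curve $\gamma_h$ on the torus by adding a segment inside $K_\eps$, observes that $\gamma_h$ has homotopy type $(1,0)$, and hence must meet the diagonal $\Delta$; a Rolle-type argument (slope $<1$ outside $K_\eps$) forces the intersection to be unique, and the hypothesis $K_\eps\cap\Delta=\varnothing$ forces it to lie on the genuine graph rather than on the added segment. The unstable fixed point is then obtained by repeating the argument for $P_\eps^{-1}$. Your contraction-mapping argument bypasses the homotopy and Rolle steps entirely and yields existence, uniqueness, and hyperbolicity in one stroke; the paper's approach, on the other hand, makes the picture in Fig.~\ref{fig-poincare-graph} more transparent and would generalize more readily if one only had qualitative control on the slope rather than a uniform contraction bound.
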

\begin{proof}
	The original proof is given in~\cite{GI} (Proposition~1, p.~34).
	We reproduce it here with minor changes.

	On the arc~$S^1\setminus \Dep$, the graph $\gaeps$ of $P_\eps$ has slope less
	than one. The endpoints of this graph lie on the left and the right
	boundaries of the rectangle $K_\eps$ at the points $\Aepl$ and $\Aemn$. If we
	connect these points by a segment inside $K_\eps$, we obtain a closed curve
	$\gamma_h$ on $\bbT^2$.  This curve has homotopy type $(1,0)$, because
	outside $K_\eps$ its slope is less than 1 and it intersects each vertical
	circle $y=\const$ at exactly one point. Consequently, $\gamma_h$ intersects
	$\Delta$ in some point. The Rolle lemma implies that this point is unique,
	because the slope of the curve is less than $1$. Let us denote it as ~$p_s$;
	it cannot lie in $K_\eps$, because $\Delta$ does not intersect~$K_\eps$.
	Therefore, it is a fixed point of~$P_\eps$.
	It is stable because it lies outside~$K_\eps$.
	
	Applying the same arguments to the inverse map $P_\eps^{-1}$ (which is the
	Poincar\'e map after time inversion) we obtain the second (unstable) fixed
	point $p_u$, which also lies outside~$K_\eps$.
\end{proof}

Let us prove assertion 4 of the Main Theorem~\ref{thm-main}: the existence and
uniqueness of canard solutions. Let us define sets $\tCnmn$ and $\tCnpl$ in the
following way: 

\begin{equation}
	\begin{array}{l}
		\tCnmn:=\{\eps\in R_n \mid (y-x)(\Bemnb) < 2 \pi n < (y-x)(\Aemnb)\},\\
		\tCnpl:=\{\eps\in R_n \mid (y-x)(\Aeplb) < 2 \pi n < (y-x)(\Beplb)\}.
	\end{array}
\end{equation}

In other words, there are values of a parameter for which case 3, a stable fixed
point inside $K_\eps$, occurs. The Monotonicity and Convexity Lemmas imply that
for $n$ big enough, the sets $\tCnpm$ are non-empty and open. Therefore, one can
choose two open intervals $\Cnpm\subset\tCnpm$ for each $n$. By definition,
$\Cnmn\cup\Cnpl\subset R_n$.

These intervals do not intersect each other: remark~\ref{rem-B-relations}
implies that $(y-x)(\Bemn)>(y-x)(\Bepl)$ (which means,
$\rarc{(y-x)(\Bemn)}{(y-x)(\Bepl)}\subset\rarc{\Eepl}{\Eemn}$).

Let $\eps \in \Cnmn$ (the case of $\eps \in \Cnpl$ can be treated in a similar
way). Take an arc $\laeps$ of the graph~$\gaeps$ with the endpoints~$\Aemn$ and
$\Bemn$, which lies inside $K_\eps$. By the definition of $\Cnmn$, the endpoints of
$\laeps$ lie on different sides of $\Delta$ in $K_\eps$, and thus it crosses
$\Delta$.  The intersection point belongs to~$K_\eps$. Convexity
Lemma~\ref{lem-conv} implies that this point is unique and the slope of $\gaeps$
at this point is less than $1$, so it corresponds to the stable fixed point of
the Poincar\'e map.

Poincar\'e map is a well-defined diffeomorphism of a circle, so the existence of
a stable fixed point implies the existence of an unstable fixed point.

Thus, theorem~\ref{thm-main} is proved.
\end{proof}

\section{Normalization}\label{sec-norm}
In this and the next two sections, we prove Lemmas~\ref{lem-shape}, \ref{lem-monot}
and~\ref{lem-conv}. Lemma ~\ref{lem-shape} is proved in
subsection~\ref{ssec-shape}.  Lemma~\ref{lem-monot} is proved in subsection
~\ref{ssec-monot}.  Lemma~\ref{lem-conv} is proved in subsection~\ref{ssec-conv}
modulo an auxiliary Theorem~\ref{thm-poincare-jump-est}. That theorem is
proved in section~\ref{sec-technique}.

We need two theorems (due to Guckenheimer and Ilyashenko) that describe normal
forms of slow-fast systems on the two-torus.

\subsection{Nonlinear transition}\label{ssec-nonlin-trans}
The following theorem describes slow-fast dynamics on the two-torus outside of
some neighborhood of a slow curve.

\begin{theorem}[\cite{GI}]
	\label{thm-nonlin-trans}
	Consider a vector field on the cylinder $x\in S^1=\bbR/\bbZ$, $y\in \bbR$ defined by 

	\begin{equation}\label{eq-cyl-sf}
		\dtx = f(x,y,\eps),\quad \dty=\eps g(x,y,\eps),\qquad  f>0,\ g>0
	\end{equation}
	Let $a,b \in \bbR,\ a<b$ and $\eps>0$. The Poincar\'e map $\rPeab{a}{b}$ of a cross-section $\Gamma^a = \{y=a\}$ to $\Gamma^b = \{y=b\}$ has the form 

	\begin{equation}
		\rPeab{a}{b}(x)=G_\eps^1\circ (G_\eps^2(x)+T(\eps)),
	\end{equation}
	where $T(\eps)\to \infty$ as $\eps\to O^+$ and $G^{1,2}_\eps$ are
	diffeomorphisms of a circle. $G^{1,2}_\eps \to G_{1,2}$ as $\eps\to 0^+$ and
	$G_{1,2}$ are diffeomorphisms of a circle, as well. 
\end{theorem}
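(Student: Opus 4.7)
\begin{pf_outline}
The plan is to treat this as a normal form / rectification result. Since $g>0$ on the compact strip $\{a\le y\le b\}$, one can divide the system by $g$ and use $y$ itself as the independent variable. This turns the problem into the single non-autonomous equation
\begin{equation}
\frac{dx}{dy} = \frac{F(x,y,\eps)}{\eps}, \qquad F := f/g > 0,
\end{equation}
on the circle $x\in S^1$. The quantity $dx/dy$ is of order $1/\eps$, so the trajectory starting from $(x_0,a)$ winds around $S^1$ of order $1/\eps$ times before hitting $\{y=b\}$; in particular the lifted Poincar\'e map $\widetilde P_\eps^{\rarc{a}{b}}(x)$ grows uniformly in $x$ at rate $\sim 1/\eps$, which will provide the diverging shift $T(\eps)$.

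Next I would build the factorization by rectifying the fast layer dynamics. For each fixed $y$ the layer equation $\dot x = f(x,y,0)$ is a non-vanishing vector field on $S^1$, hence smoothly conjugate to a uniform rotation by a diffeomorphism $\Psi_y:S^1\to S^1$, defined (up to an additive constant) by $\Psi_y(x) = \int_0^x dx'/f(x',y,0)$ rescaled by the period $T(y)=\int_0^1 dx'/f(x',y,0)$. Setting $\theta = \Psi_y(x)$ and differentiating along a trajectory of the full system,
\begin{equation}
\frac{d\theta}{dy} = \Psi_y'(x)\,\frac{f(x,y,\eps)}{\eps\,g(x,y,\eps)} + \frac{\partial \Psi_y(x)}{\partial y}
= \frac{1}{\eps\,T(y)\,g(\Psi_y^{-1}(\theta),y,0)} + O(1),
\end{equation}
so in rectified coordinates the equation is a large mean rotation plus a bounded, oscillatory $x$-dependent correction. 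Integrating from $a$ to $b$ and defining
\begin{equation}
T(\eps) := \frac{1}{\eps}\int_a^b \frac{dy}{T(y)\,\overline{g}(y)} \longrightarrow +\infty,
\end{equation}
one obtains the announced form $\widetilde P_\eps^{\rarc{a}{b}}(x) = \widetilde G_\eps^1(\widetilde G_\eps^2(x)+T(\eps))$ with $\widetilde G_\eps^2$ absorbing the rectification at $y=a$ together with the oscillatory contribution from an initial layer, and $\widetilde G_\eps^1$ the analogous correction at $y=b$; both are lifts of $S^1$-diffeomorphisms because the original Poincar\'e map is.

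Finally I would verify that $G_\eps^{1,2}$ converge as $\eps\to 0^+$ to genuine diffeomorphisms $G_{1,2}$ of the circle. The natural candidates are $G_2 = \Psi_a$ and $G_1=\Psi_b^{-1}$, determined purely by the layer equation on the two boundary circles. Convergence reduces to showing that the bounded $O(1)$ part of $d\theta/dy-T'(\eps)$ has a well-defined limit: this is a standard consequence of averaging for ODEs with a fast rotating variable, since the oscillatory component integrates against trajectories that wind $\Theta(1/\eps)$ times and therefore equidistributes on $S^1$ up to an $O(\eps)$ error. Smooth dependence of $F$ on $(x,y,\eps)$ plus Gronwall-type estimates on the derivative $\partial x(y;x_0,\eps)/\partial x_0$ ensure that the limiting $G_{1,2}$ are $C^1$ (in fact as smooth as $f,g$) diffeomorphisms and not merely homeomorphisms.

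The main obstacle is this last convergence step: one must control an oscillatory integral of size $1/\eps$ and extract from it a convergent $O(1)$ remainder, uniformly in the initial point $x_0$. This amounts to a quantitative averaging lemma along non-autonomous trajectories whose winding number itself depends on $x_0$; the cleanest route is to first normalize via $\Psi_y$ so that the fast variable rotates at constant speed $1/\eps$ up to small corrections, and then apply the usual stationary-phase / Krylov--Bogolyubov estimates in the $y$-variable. Once this averaging estimate is in place, the statements $T(\eps)\to\infty$ and $G_\eps^{1,2}\to G_{1,2}$ follow directly.
\end{pf_outline}
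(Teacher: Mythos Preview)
The paper does not prove this theorem at all: it is quoted from \cite{GI} and the text simply says ``Theorem~\ref{thm-nonlin-trans} is proved in~\cite{GI} (Theorem 2, p.~35).'' So there is no in-paper proof to compare your outline against.

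That said, your outline is headed in the right direction and is close in spirit to the construction in \cite{GI}: rectify the nonvanishing layer vector field on each fibre $\{y=\mathrm{const}\}$ so that the fast variable becomes an (almost) uniform rotation, then read off the Poincar\'e map as a large shift $T(\eps)\sim 1/\eps$ conjugated by boundary diffeomorphisms coming from the rectifications at $y=a$ and $y=b$. A few points to tighten. First, your displayed expression for $d\theta/dy$ still carries an order-$1/\eps$ term that depends on $\theta$ through $g(\Psi_y^{-1}(\theta),y,0)$; to get a clean ``rotation plus bounded correction'' you must rectify using the full $f/g$ at $\eps=0$, not just $f$, so that the leading $1/\eps$ part is genuinely $\theta$-independent. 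Second, the symbol $\overline{g}(y)$ in your formula for $T(\eps)$ is undefined; once you rectify with $f/g$ the period function alone appears and no separate $\overline{g}$ is needed. Third, and most important, the convergence $G_\eps^{1,2}\to G_{1,2}$ is exactly the averaging step you flag as the main obstacle: you need the $O(1)$ oscillatory remainder to converge \emph{uniformly in $x_0$} together with its $x_0$-derivative, otherwise the limits need not be diffeomorphisms. Your appeal to equidistribution is correct heuristically, but the actual estimate requires integrating by parts once in $y$ against the fast phase (or, equivalently, a one-step Krylov--Bogolyubov change of variables) to gain the factor of $\eps$ that makes the remainder converge; a bare Gronwall bound on $\partial x/\partial x_0$ only gives boundedness, not convergence.
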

In other words, the Poincar\'e map from one fixed vertical cross-section to
another fixed vertical cross-section is a rotation by $T(\eps)$ in properly
selected coordinates on the preimage and image circles. $T(\eps)$ tends to
infinity as $\eps\to 0^+$. It is important that corresponding coordinate maps
have uniformly bounded derivatives as $\eps\to 0^+$ and tend to smooth maps. 

Theorem~\ref{thm-nonlin-trans} is proved in~\cite{GI} (Theorem 2, p.~35).

\subsection{Normalization near the slow curve}
The following theorem shows that near the slow curve and outside of some
neighborhood of the jump point system~\bref{eq-main} is smoothly equivalent to
a linear system. 

\begin{theorem}[\cite{GI}]\label{thm-normnearss}
Consider the system

\begin{equation}
	\dtx=f(x,y,\eps),\quad \dty=\eps g(x,y,\eps),\qquad g>0
\end{equation}
Let the corresponding fast system have a curve of nondegenerate fixed points (which
is the nondegenerate slow curve). Then for small $\eps>0$ near this curve (and
outside any small fixed neighborhood of jump points) the system is smoothly orbitally
equivalent to the family 

\begin{equation}\label{eq-normform-nearss}
\dtx = a(y,\eps)x,\quad \dty=\eps
\end{equation}
\end{theorem}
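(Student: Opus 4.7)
\begin{pf_outline}
The plan is to proceed in three steps: orbital reduction to normalize the slow equation, straightening of the true (perturbed) slow manifold, and smooth linearization in the fast direction with $(y,\eps)$ treated as parameters.

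First I would divide both components of the vector field by $g(x,y,\eps)$, which is bounded away from zero by hypothesis. This is an orbital equivalence (it only rescales time along trajectories) and normalizes the second equation to $\dt y = \eps$. From now on write the system as $\dtx = f(x,y,\eps)$, $\dt y = \eps$, where $f$ denotes the new first-component function. Note that the slow curve $\{f(x,y,0)=0\}$ is unchanged, and since $\padi{f(x,y,0)}{x}\ne 0$ away from jump points, the implicit function theorem presents the slow curve locally as a smooth graph $x=\ph(y)$.

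Next I would straighten the true slow manifold. For $\eps=0$ the curve $x=\ph(y)$ is a normally hyperbolic manifold of equilibria of the fast foliation, hyperbolic in the fast direction (since $b_0(y):=\padi{f(\ph(y),y,0)}{x}\ne 0$). Fenichel's theorem (or a direct fiber-contraction construction using the uniform hyperbolicity of the fast subsystem) produces a smooth invariant manifold $x=x^*(y,\eps)$ of the full system, depending smoothly on $\eps$ with $x^*(y,0)=\ph(y)$. The coordinate change $\xi=x-x^*(y,\eps)$ preserves the form $\dt y=\eps$ and turns the invariant manifold into $\{\xi=0\}$. Invariance forces the new right-hand side $\tilde f(\xi,y,\eps)$ to vanish at $\xi=0$, so
\begin{equation*}
\tilde f(\xi,y,\eps) = \xi\, b(\xi,y,\eps), \qquad b(0,y,0)=b_0(y)\ne 0.
\end{equation*}
Hyperbolicity of the fast equation along the slow curve is thus expressed by the fact that $b$ is bounded away from zero, uniformly in $(y,\eps)$, on the region of interest.

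The final step is to linearize in $\xi$. Set $a(y,\eps):=b(0,y,\eps)$ and look for a smooth near-identity change $X=\xi H(\xi,y,\eps)$ with $H(0,y,\eps)\ne 0$ that conjugates the system to $\dt X = a(y,\eps)X$, $\dt y=\eps$. Writing out the conjugacy equation one gets the linear first-order PDE
\begin{equation*}
\xi\, b(\xi,y,\eps)\,\padi{(\xi H)}{\xi} + \eps\,\padi{(\xi H)}{y} = a(y,\eps)\, \xi H,
\end{equation*}
which, after dividing by $\xi H$, becomes a transport equation for $\log H$ driven by $b(\xi,y,\eps)-b(0,y,\eps)$. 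Since a one-dimensional hyperbolic fixed point has no resonances, this equation has a smooth solution; the parametric smooth linearization theorem of Sternberg--Belitski (applied with parameters $(y,\eps)$) gives the desired $H$. Alternatively, one solves the PDE directly by integrating along the characteristics of the straightened system and using the uniform fast contraction $|b|\ge c>0$ to show the integral converges and depends smoothly on $(y,\eps)$ up to $\eps=0$.

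The main obstacle is the third step: obtaining a linearization which is \emph{smooth} and whose smoothness is \emph{uniform as $\eps\to 0^+$}. The naive resolution of the PDE involves improper integrals along slow trajectories that stay near $\{\xi=0\}$ for times of order $1/\eps$; to make the construction uniform one must exploit that the fast hyperbolicity rate $b(0,y,\eps)$ is bounded away from zero on the region away from the jump points, which is precisely why one excludes a fixed neighborhood of $G^{\pm}$ in the statement of the theorem.
\end{pf_outline}
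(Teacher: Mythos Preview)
The paper does not give its own proof of this theorem; it simply refers to \cite{GI} (Theorem~3, p.~38). There is therefore no in-paper argument to compare your outline against. That said, your three-step plan --- orbital normalization $\dt y=\eps$, Fenichel straightening of the true slow manifold, and then a parametric one-dimensional smooth linearization in the fast variable --- is a correct and standard route to this result, and it is consistent with how the paper uses the theorem afterwards (the Fenichel manifold $x=s^\pm(y,\eps)$ and the identification $a(y,\eps)=f'_x(s(y,\eps),y,\eps)$ in~\eqref{eq-afx}).

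Your transport equation for $\log H$ is correct (up to the sign of the source term, which is $a-b$ rather than $b-a$, an inessential slip). The point you single out as the main obstacle --- smoothness of the linearization \emph{uniformly} as $\eps\to 0^+$ --- is exactly the delicate part. Your resolution is also the right one: along the characteristics of the straightened system the fast variable $\xi$ decays (or grows, in reverse time) at a rate bounded below by $\min_y |b(0,y,0)|>0$, which is uniform precisely because the statement excludes a fixed neighborhood of the jump points; hence the integral $\int (a-b)\,dt = O\!\left(\int |\xi|\,dt\right)$ converges uniformly in $(y,\eps)$, and the same argument applied to the differentiated equation controls derivatives of $H$. Invoking a packaged ``parametric Sternberg--Belitskii'' theorem is acceptable, but since the fast dynamics is one-dimensional (no resonances) the direct integration you sketch is both simpler and makes the uniformity in $\eps$ transparent; I would favor that version in a write-up.
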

The proof of this theorem can be found in~\cite{GI} (Theorem 3, p.~38). The Fenichel theorem (see ~\cite{Fenichel}, and also~\cite{ODA1} and ~\cite{ODA2})
implies that in some neighborhood of the stable (unstable) parts of the slow
curve $M^-$ ($M^+$) there exists a smooth invariant manifold $\Semn$ ($\Sepl$),
which can be seen as the graph of the function $x=s^-(y,\eps)$
($x=s^+(y,\eps)$ resp.). Variational equations imply that the function $a(y,\eps)$ in Theorem~\ref{thm-normnearss} has the form:

\begin{equation}\label{eq-afx}
a(y,\eps)=f'_x (s(y,\eps),y,\eps)
\end{equation}
\begin{remark}
	True slow curves~$S^\pm_\eps$ are non-unique. However, the distance between any such curves is of order~$O(e^{-C/\eps})$ (thus they are exponentially close to each other) and we can choose any of them for our analysis.
\end{remark}
\subsection{Rough estimate of the derivative of the Poincar\'e map}
Let us prove a preliminary estimate for the derivative of the Poincar\'e map.

\begin{remark}
	Here and below we will use letter $C$ (without indicies) to denote (generally
	different) positive constants that do not depend on~$\eps$. 
\end{remark}
\begin{lemma}\label{lem-robust}
Consider the slow-fast system

\begin{equation}
	\dtx=f(x,y,\eps),\quad\dty=\eps,\qquad (x,y)\in S^1\times \bbR
\end{equation}
For $\eps>0$ small enough the following estimate for Poincar\'e map $\rPeab{a}{b}:
\Gamma^a \to \Gamma^b$ holds: 

\begin{equation}
	|\ln (\rPeab{a}{b}(x))'| < C \frac{b-a}{\eps},
\end{equation}
\end{lemma}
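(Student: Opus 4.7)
The plan is to compute the derivative of the Poincaré map explicitly via the variational equation and exploit the fact that $\partial_x f$ is uniformly bounded on the relevant compact set.

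First I would parametrize trajectories by $y$ rather than by $t$. Since $\dty = \eps > 0$, every trajectory of the system from $\Gamma^a$ to $\Gamma^b$ can be written as a graph $x = x(y; x_0)$ satisfying
\begin{equation*}
	\frac{dx}{dy} = \frac{f(x,y,\eps)}{\eps}, \qquad x(a; x_0) = x_0,
\end{equation*}
and the Poincaré map is $\rPeab{a}{b}(x_0) = x(b; x_0)$. Differentiating in $x_0$ gives the scalar linear variational equation
\begin{equation*}
	\frac{d}{dy}\!\left(\padi{x}{x_0}\right) = \frac{1}{\eps}\padi{f}{x}\bigl(x(y;x_0),y,\eps\bigr)\cdot \padi{x}{x_0},
\end{equation*}
which integrates to the closed-form expression
\begin{equation*}
	\bigl(\rPeab{a}{b}\bigr)'(x_0) = \exp\!\left(\frac{1}{\eps}\int_a^b \padi{f}{x}\bigl(x(y;x_0),y,\eps\bigr)\,dy\right).
\end{equation*}

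Next, taking the logarithm and the absolute value yields
\begin{equation*}
	\bigl|\ln \bigl(\rPeab{a}{b}\bigr)'(x_0)\bigr| \le \frac{1}{\eps}\int_a^b \left|\padi{f}{x}\right|\,dy \le \frac{C(b-a)}{\eps},
\end{equation*}
where $C := \sup|\partial_x f|$ taken over $S^1 \times [a,b] \times [0,\eps_0]$. This supremum is finite because $f$ is smooth on the compact product, and the bound is uniform in the initial point $x_0$ and in $\eps \in (0,\eps_0]$.

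There is no real obstacle here; the only point to check is that the compact set on which $\partial_x f$ is estimated is fixed independently of $\eps$, which is automatic since $a$, $b$ are given constants and $\eps$ ranges over a bounded interval. The lemma is really just a consequence of the variational equation combined with smoothness of $f$, and serves as a first, very coarse, estimate to be refined in the following subsections.
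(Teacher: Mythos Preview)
Your proof is correct and follows essentially the same approach as the paper: both derive the exponential formula for $(\rPeab{a}{b})'$ from the variational equation and then bound the integrand by the supremum of $|\partial_x f|$ on a compact set. The paper's version is slightly terser (it does not spell out the parametrization by $y$ or the choice of compact set) but the argument is identical.
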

\begin{proof}

Let $x=x(y; x_0,\eps)$ be the phase curve that contains the point $(x_0,a)$ for
given $\eps$. As $f'_x$ is bounded (from above and from below) on the
two-torus, variational equations imply that
\begin{equation}
	(\rPeab{a}{b}(x_0))'=\exp\left(\frac{1}{\eps}\int_{a}^{b}
	f'_x(x(y;x_0,\eps),y,\eps) dy\right) <e^{\frac{C}{\eps}(b-a)}
\end{equation}
The same estimate holds for the inverse map. Applying the logarithm we obtain the
required estimate.
\end{proof}

This lemma will be used in the neighborhood of a jump point. More precise
estimates of the derivative of the Poincar\'e map near a jump point are presented
in subsection~\ref{ssec-jump-est-proof}.

\section{Properties of the Poincar\'e map}\label{sec-poincare}

In this section and two sections that follow we prove Lemmas~\ref{lem-shape}, ~\ref{lem-monot} and~\ref{lem-conv}.

\subsection{Distortion: proof of Lemma~\ref{lem-shape}}\label{ssec-shape}
Recall the notation:
\begin{gather*}
	\alpm =\tpm\pm \delpm\\
	\Gapm=\{y=\alpm\}\\
	\Jpl:=\{(x,y) \in \Gapl \mid x \in [\spl,\pi]\},\quad \Jmn:=\{(x,y) \in \Gamn
	\mid x \in [-\pi, \smn]\} 
\end{gather*}

Denote $\rPeab{-\pi}{\alpl}$ as $Q_\eps$ for brevity. Recall that  $\Dep=\inv{Q_\eps}(\Jpl)$.

\begin{proposition}\label{prop-q-est} 
	For any~$\delpl$ small enough there exist constants $c_{1,2}(\delpl)>0$ such that the following estimates hold:

	1. $|\Dep|=O(e^{-c_1(\delpl)/\eps})$; 

	2. $Q_\eps'\le C e^{c_2(\delpl)/\eps}$.

	Moreover, $c_{1,2}(\delpl)\to 0^+$ as $\delpl\to 0^+$.
\end{proposition}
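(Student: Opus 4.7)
The plan is to split $\Qeps$ at two intermediate cross-sections and apply the three tools of Section~\ref{sec-norm}---Theorem~\ref{thm-nonlin-trans} for the fast region, Lemma~\ref{lem-robust} for a thin strip around the jump point $G^+$, and Theorem~\ref{thm-normnearss} for the strip in which trajectories follow the unstable branch $M^+$---each on the interval where it is sharpest.

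Choose small auxiliary parameters $\delta_0,\delta_0'>0$ (to be tuned at the end) and introduce the intermediate sections $\Gamma^{(a)}=\{y=\tpl-\delta_0'\}$ and $\Gamma^{(b)}=\{y=\tpl+\delta_0\}$. Write $\Qeps=Q^{(3)}_\eps\circ Q^{(2)}_\eps\circ Q^{(1)}_\eps$, where the three factors are the Poincar\'e maps over the three consecutive $y$-intervals. In the first piece $f>0$ (no slow curve), so Theorem~\ref{thm-nonlin-trans} gives $(Q^{(1)}_\eps)^{\pm1\,\prime}=O(1)$ uniformly. In the second piece Lemma~\ref{lem-robust} yields $(Q^{(2)}_\eps)^{\pm1\,\prime}\le e^{C(\delta_0+\delta_0')/\eps}$. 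In the third piece Theorem~\ref{thm-normnearss} conjugates the system to $\dot{\tilde x}=a(\tilde y,\eps)\tilde x$, $\dot{\tilde y}=\eps$ in a neighbourhood of $M^+$, so in normal coordinates $Q^{(3)}_\eps$ acts as $\tilde x\mapsto\tilde x\,e^{\Lambda/\eps}$ with $\Lambda=\int_{\tpl+\delta_0}^{\alpl}a(u,\eps)\,du$. Combining the formula $a(y,\eps)=f'_x(s(y,\eps),y,\eps)$ with the second-order expansion of $f$ at $G^+$ given by~\bref{eq-nondeg-main-wlog} yields $a(y,0)\sim \nu\sqrt{y-\tpl}$ for some $\nu>0$, whence $\Lambda\ge c'((\delpl)^{3/2}-\delta_0^{3/2})$ for $\eps$ small enough.

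Assertion~2 follows at once by multiplying the three pointwise bounds, using Lemma~\ref{lem-robust} also in the third piece as a universal rough bound: $\Qeps'\le C\,e^{C(\delpl+\delta_0')/\eps}$, and taking $\delta_0'\le\delpl$ gives $c_2(\delpl)=O(\delpl)\to 0^+$ as $\delpl\to 0^+$. For Assertion~1 we estimate $(\Qeps^{-1})'$ on $\Jpl$. First, $\Jpl$ lies entirely in the backward basin of $M^+$: for $x>x_+(\alpl)$ one has $f>0$ and backward time drives $x$ down to $M^+$; for $x\in[\spl,x_+(\alpl))$ one has $f<0$ and backward time drives $x$ up to $M^+$. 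Hence the backward map $(Q^{(3)}_\eps)^{-1}$ squeezes $\Jpl$ into a $O(e^{-\Lambda/\eps})$-neighbourhood of $M^+$ at $\Gamma^{(b)}$. Composing with the expansion bounds on $(Q^{(2)}_\eps)^{-1}$ and $(Q^{(1)}_\eps)^{-1}$,
\[
|\Dep|\le O(1)\cdot e^{C(\delta_0+\delta_0')/\eps}\cdot O\bigl(e^{-\Lambda/\eps}\bigr).
\]
Choosing $\delta_0,\delta_0'$ of order $(\delpl)^{3/2}$, small enough so that $C(\delta_0+\delta_0')\le\Lambda/2$, yields $|\Dep|=O(e^{-c_1(\delpl)/\eps})$ with $c_1(\delpl)\gtrsim (\delpl)^{3/2}\to 0^+$.

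The main obstacle lies in the third region: the exponential backward contraction of Theorem~\ref{thm-normnearss} is proved only in a small neighbourhood of $M^+$, while $\Jpl=\{x\in[\spl,\pi]\}$ is a macroscopic interval not contained in this neighbourhood. The remedy is the observation that above $M^+$ the field $f$ is uniformly positive, so the backward flow from any point of $\Jpl$ reaches the normal-form neighbourhood within backward $t$-time $O(1)$, i.e.\ in a $y$-increment of order~$O(\eps)$; the variational equation along this short entry phase contributes only a bounded multiplicative factor, and therefore does not spoil the exponential contraction rate $e^{-\Lambda/\eps}$. Passing back from normal to original coordinates costs one more bounded factor, as the conjugating diffeomorphism in Theorem~\ref{thm-normnearss} has $\eps$-uniformly bounded derivative on its domain.
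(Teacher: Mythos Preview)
Your proof is correct and follows essentially the same strategy as the paper: both split $\Qeps$ (resp.\ $\Qeps^{-1}$) into three pieces via two intermediate cross-sections near $G^+$, apply Theorem~\ref{thm-nonlin-trans} in the fast region, Lemma~\ref{lem-robust} across the jump, and Theorem~\ref{thm-normnearss} along $M^+$, and both use the $\sqrt{y}$ asymptotics of $a(y,\eps)$ to obtain a contraction of order $e^{-c\delpl^{3/2}/\eps}$. The only cosmetic difference is that the paper places the intermediate sections at distance $\delpl^2$ from $G^+$ (so the jump-strip bound $e^{C\delpl^2/\eps}$ is automatically dominated), whereas you leave $\delta_0,\delta_0'$ free and tune them to order $\delpl^{3/2}$ at the end; both choices achieve the same balance.
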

\begin{remark}\label{rem-rev-q-est}
	Let us denote the Poincar\'e map from $\Gamn$ to $\Gamma$ (in forward time) as
	$\wtQ_\eps$. For the system with the reversed time similar assertions can be made
	for $\wtQ_\eps$ and some other constants $c_{1,2}$: 
	
	1. $|\Dem|=O(e^{-c_1(\delmn)/\eps})$; 
		
	2. $\left(\wtQ_\eps^{-1}\right)'\le C e^{c_2(\delmn)/\eps}$. 

	Moreover, $c_{1,2}(\delmn)\to 0^+$ as $\delmn\to 0^+$.

	It can be proven by the same arguments, applied to the system with
	the reversed time. 
\end{remark}
	
\begin{proof}[Proof of proposition~\ref{prop-q-est}]
	In order to prove the first assertion let us estimate the derivative of~$Q_\eps$ on the segment $\Jpl$. Consider cross-sections

	\begin{equation}
		\Ga_1=\{y=\tpl+\delpl^2\},\quad \Ga_2=\{y=\tpl-\delpl^2\}.
	\end{equation}
	Denote a segment $[\tau^++\delpl^2, \tau^--\delmn^2]$ by $\Sigma$ and let
	$\Semn$ and $\Sepl$ be the stable and unstable parts of the true slow curve
	over the segment $\Sigma$, resp. Also, denote  the segment $[\tau^++\delpl^2,
	\tau^++\delpl]$ by $\Sigma^+$.

	The map $\Qeinv$ can be presented as a composition of the following Poincar\'e
	maps:

	\begin{gather*}
		\Qeinv: \Ga^+\stackrel{Q_1}{\to} \Ga_1 \stackrel{Q_2}{\to} \Ga_2 \stackrel{Q_3}{\to} \Ga\\
		\Qeinv=Q_3\circ Q_2 \circ Q_1
	\end{gather*}
	We will use normal forms to analyse maps $Q_1$ and $Q_3$ and Lemma~\ref{lem-robust} to estimate the derivative of~$Q_2$. 

	\subsubsection{Linear contraction: neighborhood of the true slow curve}
	Let us prove the following estimate:
	\begin{equation}
		\label{eq-linear-contraction}
		\left.Q_1'\right|_{\Jpl}\le C_1 \exp \left(-\frac{C\delpl^{3/2}}{\eps}\right),\quad
	\end{equation}
	where $C, C_1$ are positive constants that do not depend on ~$\delpl$.
	
	When trajectories that cross $\Jpl$ are traced back in time from $\Gapl$ to
	$\Ga^1$, we see that the amount of time they spend outside of some
	neighborhood of~$M^-$ is uniformly bounded from above. Thus we can use the
	normal form~\bref{eq-normform-nearss} to study the map $Q_1$ (see
	Theorem~\ref{thm-normnearss}). 

	We shift coordinates by moving the origin to the jump point $\Gpl=(\tpl,\spl)$:

	\begin{equation}
		x_1=x-\spl,\quad
		y_1=y-\tpl
	\end{equation}
	We will apply normal form~\bref{eq-normform-nearss} over the segment
	$\Sigma^+$ for small ~$\delpm$. (Note that $\Sigma^+$ does not depend
	on~$\eps$.) Let us show that for $y_1\in\Sigma^+$ the function $a(y_1,\eps)$
	has the following form: 

\begin{equation}
	a(y_1,\eps)=(C+o(1))\sqrt{y_1}+O_{\delpl}(\eps),
\end{equation}
where $o(1)$ is a function of $y_1$ and $\eps$ which tends to $0$ uniformly as
$\delpl\to 0$ and $O_{\delpl}(\eps)$ is of order $\eps$ for any fixed~$\delpl$.
Indeed, from nondegenericity assumptions~\bref{eq-nondeg-main-wlog} and the
implicit function theorem, it follows that~$M^+$ can be presented as a graph of
the function

\begin{equation}
	x=s^+(y_1)=(C+o(1))\sqrt{y_1}.
\end{equation}
Thus the true slow curve is a graph of the function

\begin{equation}\label{eq-tsc-asy}
	x=s^+(y_1,\eps)=(C+o(1))\sqrt{y_1}+O_{\delpl}(\eps).
\end{equation}
Substitute~\bref{eq-tsc-asy} into the expression for $a(y_1,\eps)$
(see~\bref{eq-afx}). We obtain:

\begin{multline}\label{eq-a-asy}
a(y_1,\eps)=f'_x(s(y_1,\eps),y,\eps)=\\
=f''_{xx}(0,0,0) (s(y_1)+O_{\delp}(\eps)) + O(|y|+\eps)=
\\= (C+o(1))\sqrt{y_1}+O_{\delp} (\eps),
\end{multline}
where $f''_{xx}|_0>0$ according to~~\bref{eq-nondeg-main-wlog}. 

From the normal form~\bref{eq-normform-nearss} it follows that the map $Q_1$
is linear in normalized chart. To simplify the notation, let $\delta=\delpl$.
Substituting expression~\bref{eq-a-asy} for $a(y,\eps)$ into ~\bref{eq-normform-nearss} and integrating, we have:

\begin{equation}
	Q_1(x_1)=\Lambda(\eps) x_1,\quad \Lambda(\eps)=e^{-\frac{\delta^{3/2}}{\eps}(C+o(1)+O_{\delta}(\eps))}<e^{-\frac{\delta^{3/2}}{\eps}(\frac{3}{4}C)}
\end{equation}
The normalizing map that we applied to obtain the normal form has its derivative
bounded away from $\infty$ and $0$. Thus in the original coordinates the
right-hand part of~\bref{eq-linear-contraction} can be taken to be a constant
times $\Lambda(\eps)$.

\subsubsection{Neighborhood of the jump point}

Let us show that for~$\delta$ small enough the expansion of the phase curves,
which is possible near the jump point (during the transition from $\Ga_1$ to
$\Ga_2$) is much less than their contraction that was accumulated during the
transition near the slow curve. Indeed, Lemma~\ref{lem-robust} yields the
following estimate: 

\begin{equation}
	Q_2' \le e^{C \delta^2/\eps}\ll \Lambda^{-1}(\eps).
\end{equation}

\subsubsection{Estimating the derivative of~$\Qeps$}
Map $Q_3$ is a rotation in rectifying charts. Its derivative is bounded
uniformly in $\eps$.

Using the chain rule, we get:

\begin{equation}\label{eq-Qeinv-est}
	\Qeinv|_{\Jpl}'=O(e^{-C/\eps}), \quad C=O(\delpl^{3/2})
\end{equation}
Thus,

\begin{equation}
	|\Depl|=O(e^{-C/\eps}),\quad C=O(\delpl^{3/2})
\end{equation}
Assertion 1 of Proposition~\ref{prop-q-est} is proved. Due to
Remark~\ref{rem-rev-q-est} this proves assertion 1 of Lemma~\ref{lem-shape}. 

Let us prove assertion 2 of Proposition~\ref{prop-q-est}. Consider the map $Q_\eps$:

\begin{equation}
	Q_\eps=\inv{Q_1}\circ \inv{Q_2}\circ \inv{Q_3}
\end{equation}
The derivative of~$\inv{Q_3}$ can be estimated from above by
Theorem~\ref{thm-nonlin-trans} and so is bounded by a constant. The time of the
transition from $\Ga_2$ to $\Gapl$ is no greater than $2\delpl$.  Applying
Lemma~\ref{lem-robust} to the map~$\inv{Q_1}\circ \inv{Q_2}$, we obtain
the desired estimate for its derivative. Along with the chain rule, it completes
the proof. 
\end{proof}

Let us prove assertion 4 of Lemma~\ref{lem-shape}. We present $P_\eps$ as
a composition: 

\begin{equation}
	P_\eps: \Ga\stackrel{Q_\eps}{\to} \Gapl \stackrel{\Ppm}{\to} \Gamn
	\stackrel{\wtQ_\eps}{\to} \Ga
\end{equation}
By definition, the trajectories that start outside of $\Dep$, intersect $\Gapl$
outside of segment $\Jpl$:

\begin{equation}
	Q_\eps(\Gamma\setminus\Dep)= \Gapl\setminus J^+
\end{equation}
Note that outside of any neighborhood of the slow curve the function $f$ is
bounded away from zero. Therefore it will take time of order $O(1)$ for the
trajectory that crosses  $\Gapl\setminus\Jpl$ to reach certain neighborhood of
the stable part of the slow curve $M^-$. Due to the normal
form~\bref{eq-normform-nearss}, the derivative of the map~$\Ppm$ satisfies the
following estimate:

\begin{equation}\label{eq-ppm-est}
	(\Ppm)'|_{\Gapl\setminus\Jpl}<  e^{-C/\eps},
\end{equation}
where~$C$ is bounded away from $0$ as $\delpm\to0$. 

Therefore, the image of segment~$\Gapl\setminus\Jpl$ has exponentially small
length and intersects the true slow curve. Thus $\Ppm(\Gapl\setminus\Jpl) \subset
\Jmn$ for $\eps>0$ small enough. We have: 
\begin{equation}
	P_\eps(\Ga\setminus\Dep)\subset \Dem,
\end{equation}
which implies assertion 4 of Lemma~\ref{lem-shape}.

Let us prove assertion 2 of Lemma~\ref{lem-shape} (assertion 3 can be proven by
the same arguments applied to the system with reversed time). From
estimate~\bref{eq-ppm-est}, it follows that for a trajectory that crosses
$\Ga\setminus\Dep$, the derivative of the map $\Ppm$ is less than $1$ and,
moreover, it is exponentially small as $\eps\to0^+$. Similar
to~\bref{eq-Qeinv-est}, one can show that the derivative of~$\wtQ_\eps$ is
exponentially small on the segment~$\Jmn$.  Derivative of the map~$Q_\eps$ is
less than $O(e^{c(\delpl)/\eps})$ according to assertion 2 of
Proposition~\ref{prop-q-est} and $c(\delpl)$ can be considered small enough for
an appropriate choice of~$\delpl$.  Therefore, any possible expansion
by~$Q_\eps$ is controlled by the exponential contraction $\Ppm$, and the
derivative of $P_\eps$ is exponentially small.

Lemma~\ref{lem-shape} is proven.

\subsection{Convexity: proof of Lemma~\ref{lem-conv}}\label{ssec-conv}
In this section Lemma,~\ref{lem-conv} will be proved.
Recall that we have defined the set $U$ of initial conditions $u\in \Gamma$, for
which

\begin{equation}\label{eq-u-cond}
	P_\eps'(u)\in[1/2,2].
\end{equation}
We will show that $U$ consists of two arcs of the circle~$\Gamma$: on one arc
the derviative of the Poincar\'e map increases (the graph $\gaeps$ is convex), and on the
other one it decreases. All along the proof we will assume that the initial
condition of every trajectory considered belongs to $U$. 

\subsubsection{Heuristic motivation}
As Lemma~\ref{lem-shape} shows, outside of the segment~$\Dep$ the derivative of
the Poincar\'e map $P_\eps$ is exponentially small. Therefore, the set~$U$  lies
in $\Dep$ and corresponding trajectories cross $\Jpl$.

After intersecting~$\Jpl$ any such trajectory spends some time near the unstable
part of the slow curve~$M^+$. Then it jumps either up or down, leaves the
neighborhood of $M^+$, and approaches the neighborhood of the stable part of
true slow curve~$M^-$ in bounded time. Calculating the derivative of the
Poincar\'e map, one easily sees that it increases while the trajectory spends
time near~$M^+$ and decreases while the trajectory passes along $M^-$ (which
follows from the normal form~\bref{eq-normform-nearss}). We do not consider
trajectories which spend too much time near either~$M^+$ or $M^-$: the
derivative of the Poincar\'e map for such trajectories is either too big or too
small. Trajectories we are interseted in jump ``somewhere in between'' and spend
comparable amount of time near $M^+$ and $M^-$.  (Later we will give rigorous
definitions for this.)

After the jump (either upwards or downwards), those trajectories cross~$J^-$, jump near the point~$\Gmn$
and cross~$\Gamma$ in a point that belongs to $\Dem$.

Let us extend the true slow curve $M^+_\eps$ in reverse time and denote the
point of its intersection with the cross-section~$\Gamma$ by $u_0$. Obviously,
$u_0\in\Dep$. Consider trajectories starting from points $u\in \Dep$ that lie
lower than~$u_0$. When $u$ tends to~$u_0$ from below, the corresponding
trajectory tends to $\Sepl$ and thus it spends more time near~$M^+$ and less
time near~$M^-$.  Hence when $u$ increases, the derivative of the Poincar\'e map
\emph{increases} too.  When $u$ coincides with $u_0$, the derivative reaches its
maximum value, because the corresponding trajectory coincides with the true slow
curve~$\Sepl$.  After that, as u increases, the derivative of the Poincar\'e map will
decrease for similar reasons: the trajectory spends less time near~$M^+$, and
more time near~$M^-$.

The foregoing analysis shows that these naive arguments do work.

\subsubsection{The strategy}

We will use the same method to deal with~$P_\eps$ as we used to prove
Lemma~\ref{lem-shape}: we decompose the map~$P_\eps$ into the composition of
several Poincar\'e maps. By analyzing the dynamics near~$M^+$ and $M^-$, we will
show that the trajectories for which the derivative of
the Poincar\'e map is close to~$1$ leave the neighborhood of~$M^+$ near some fixed
cross-section $y=\yptom$. Afterwards, using variational equations, we will estimate
the second derivative of the Poincar\'e map. 

To proceed with this strategy we need additional information describing the 
dynamics near the jump point.

\begin{theorem}
	\label{thm-poincare-jump-est}
	For some constant $\la>0$ and arbitrarily small~$\delpl$, there exists
	a~$C^+(\delpl)$, such that, for arbitrary~$x\in\Dep$, the following
	representation of the derivative of the Poincar\'e map
	$R_\eps=Q_\eps^{-1}=\lPeab{\alpl}{-\pi}$ holds:

	\begin{equation}
		\ln R_\eps'(x)=\frac{C^+(\delpl)+O(\eps^\lambda)}{\eps},
	\end{equation}
	where~$C^+(\delpl)<0$ is continuous and tends monotonically to zero
	as~$\delpl\to0^+$

\end{theorem}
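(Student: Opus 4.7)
The plan is to decompose $R_\eps$ as a composition $R_\eps = R_3 \circ R_2 \circ R_1$ using two intermediate horizontal cross-sections $\Gamma_1 = \{y = \tpl + \eta\}$ just above the jump point $G^+$ and $\Gamma_2 = \{y = \tpl - \eta\}$ just below, where $\eta = \eps^{\lambda}$ for some small $\lambda \in (0, 2/3)$ to be chosen. The map $R_1$ traces backward from $\Gamma^+$ through the neighborhood of the unstable slow curve $M^+$ to $\Gamma_1$; $R_2$ carries us through the fold region from $\Gamma_1$ to $\Gamma_2$; and $R_3$ covers the fast-transit segment from $\Gamma_2$ down to $\Gamma$. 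I would then estimate $\eps \ln R_i'$ for each piece separately and sum them.

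For $R_1$ I would apply the smooth orbital normalization of Theorem~\ref{thm-normnearss}: in normalizing coordinates the system becomes $\dtx = a(y, \eps) x$, $\dty = \eps$, and the backward Poincar\'e map is a pure linear contraction with multiplier $\exp\bigl(-\frac{1}{\eps}\int_{\tpl+\eta}^{\alpl} a(y, \eps)\,dy\bigr)$, independent of $x$. Since the normalizing chart has Jacobian bounded away from $0$ and $\infty$ uniformly in $\eps$, the same formula holds in the original coordinates up to an $O(1)$ additive correction to $\ln R_1'$. Using $a(y, \eps) = a(y, 0) + O(\eps)$ and the expansion $a(y, 0) \sim C\sqrt{y - \tpl}$ near the fold (see~\eqref{eq-a-asy}), the lower limit of integration can be extended from $\tpl + \eta$ down to $\tpl$ at the cost of $O(\eta^{3/2}) = O(\eps^{3\lambda/2})$. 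This gives $\eps \ln R_1' = -\int_{\tpl}^{\alpl} a(y, 0)\,dy + O(\eps^{3\lambda/2}) + O(\eps)$ uniformly in $x$. Setting $C^+(\delpl) := -\int_{\tpl}^{\alpl} a(y, 0)\, dy$, this quantity is negative (since $a(\cdot, 0) > 0$ on $M^+$), continuous in $\delpl$, and tends monotonically to $0$ as $\delpl \to 0^+$ because the interval of integration shrinks while the integrand stays positive.

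For $R_3$, Theorem~\ref{thm-nonlin-trans} presents the map as a rotation in bounded rectifying coordinates, so $\ln R_3' = O(1)$ and hence $\eps \ln R_3' = O(\eps)$. For $R_2$, the jump region has width $2\eta = 2\eps^{\lambda}$, so the robust estimate of Lemma~\ref{lem-robust} yields $|\ln R_2'| \le C\eta/\eps$, i.e.\ $\eps \ln R_2' = O(\eps^{\lambda})$. Combining, $\eps \ln R_\eps' = C^+(\delpl) + O(\eps^{\lambda}) + O(\eps^{3\lambda/2}) + O(\eps) = C^+(\delpl) + O(\eps^{\lambda})$, which is the required estimate.

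The main obstacle is justifying that Theorem~\ref{thm-normnearss} still applies with $\Gamma_1$ lying at distance $\eta = \eps^{\lambda}$ from the fold $G^+$, since as stated the theorem excludes only a fixed small neighborhood of the jump point. One needs either a quantitative version tracking how the normalizing change of variable degrades as one approaches the fold (expected to be valid for $\lambda < 2/3$, the scale of the natural entry layer $\eps^{2/3}$) or a supplementary fold-point analysis in the spirit of Fenichel/Pontryagin--Rodygin in that layer; this is presumably the technical content deferred to Section~\ref{sec-technique}. The Distortion Lemma mentioned in the acknowledgements is likely the tool used to transfer the $x$-independent estimate from normalized coordinates back to the original ones and to guarantee uniformity over all trajectories parametrized by $\Dep$.
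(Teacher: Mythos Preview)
Your decomposition $R_\eps = R_3 \circ R_2 \circ R_1$ with intermediate sections at distance $\eps^\lambda$ from the fold is exactly the paper's strategy, and your treatment of $R_2$ via Lemma~\ref{lem-robust} matches the paper. However, both remaining pieces contain genuine gaps, and you have misidentified where the Distortion Lemma enters.

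For $R_3$ you invoke Theorem~\ref{thm-nonlin-trans} to conclude $\ln R_3' = O(1)$. That theorem requires a strip on which $f$ is bounded away from zero \emph{uniformly in $\eps$}; here the upper boundary $\Gamma_2=\{y=\tpl-\eps^\lambda\}$ approaches the fold, and near $(\spl,\tpl)$ the function $f$ is only of order $\eps^\lambda$, so the rectifying diffeomorphisms degenerate. The paper's actual bound is only $|\ln (R_\eps^3)'|\le C\eps^{\nu-1}$ (Proposition~\ref{prop-Reps-estimate}), not $O(1)$, and it is obtained by a completely different mechanism: one passes to the \emph{vertical} Poincar\'e map $\psi:\{x=0\}\to\{x=0\}$, represents $R_\eps^3$ as a long composition $\gaeps^{-1}\circ\psi^{N}\circ\gamma$, and applies the Denjoy--Schwartz Distortion Lemma to control the distortion of $\psi^N$. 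So the Distortion Lemma is the engine for $R_3$, not a device for transferring normal-form estimates back to original coordinates as you guessed.

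For $R_1$ you correctly flag the obstacle that Theorem~\ref{thm-normnearss} only excludes a \emph{fixed} neighborhood of the fold. The paper does not resolve this by a quantitative normal form; instead it splits $R_1$ once more at a fixed section $\Gamma'=\{y=\tpl+\delta'\}$. From $\Gapl$ to $\Gamma'$ the normal form applies and forces the trajectory exponentially onto the true slow curve $\Sepl$; from $\Gamma'$ down to $\{y=\tpl+\eps^\mu\}$ one then replaces $x(y,\eps)$ by $s^+(y,\eps)$ in the variational integral and uses the Mishchenko--Rozov asymptotic $s^+(y,\eps)=s^+(y)+O(\eps^{2/3-\mu/2})$ (Theorem~\ref{thm-mr-tsc-est}) rather than any extension of the normalizing chart. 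This is the missing ingredient in your $R_1$ analysis.
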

\begin{remark}
	A similar expression (with another constant $C^-(\delmn)<0$, which tends to 0
	monotonically as $\delmn\to0$) holds for $\wtQ_\eps=\rPeab{\almn}{\pi}$.
\end{remark}
Theorem~\ref{thm-poincare-jump-est} is proved in section~\ref{sec-technique}.

We denote the normalizing charts near unstable and stable parts of the true slow
curve by $(\xpl,y)$ and $(\xmn,y)$, resp. Define $\Upm:=\{|\xpm|<b\}$ for some
$b>0$.  Let us first deal with the case when the trajectory jumps from
$S^{+}$ to $S^{-}$ in the negative direction (down). Suppose that the
trajectory under consideration leaves the neighborhood~$\Upl$ when $y=\ypl$ and
reaches~$\Umn$ when~$y=\ymn$.  Obviosly, $\ymn=\ypl+O(\eps)$ (since outside of
$\Upm$, the function $f$ is bounded away from $0$). We assume that in the charts
$(\xpm,y)$, system (\ref{eq-main}) has the following form: 

\begin{equation} 
	\dtx_{\pm} = \apm(y,\eps) \xpm,\quad \dty = \eps 
\end{equation}
Let us define

\begin{equation}
	\Phpm(y,\eps)=\int_{\alpm}^{y} \apm(v,\eps)\,dv, \quad \Phpm(y)=\Phpm(y,0)
\end{equation}
Function~$\Phmn$ (\emph{resp.} $\Phpl$) is equal to the logarithmic derivative of the
corresponding Poincar\'e map in the normalizing chart. In other words, they
estimate contraction (expansion) of trajectories, accumulated during the transition
near stable (\emph{resp.} unstable) part of the slow curve. Since $\apl(y,\eps)>0$ and
$\amn(y,\eps)<0$, and $\alpl<y<\almn$, it follows that $\Phpm(y,\eps)>0$,
and~$\Phpl$ increases with $y$, while $\Phmn$ decreases.  Let $y=\yptom$ be the
root of the equation: 

\begin{equation}
	\Phpl(y,0) - \Phmn(y,0)=0. 
\end{equation}
In order to satisfy~\bref{eq-u-cond} it is neccessary that $\ypm$ is close to
$\yptom$. Otherwise, either attraction or repulson will dominate in~$P_\eps'$,
though~\bref{eq-u-cond} demands them to annihilate. The next Lemma formalizes
this heuristic arguments. 

\begin{lemma}\label{lem-yptom-est} 
	There exists $\la>0$, such that for an appropriate choice of a small~$\delpm$
	and for any trajectory with initial condition in~$U$, the following estimates
	hold:

\begin{equation}\label{eq-yptom-est}
|\yptom-\ypl| = O(\eps^{\lambda}),\quad
|\yptom-\ymn| = O(\eps^{\lambda}).\quad
\end{equation}
	These two estimates are equivalent because $|y^+-y^-|=O(\eps)$.
\end{lemma}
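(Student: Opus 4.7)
The plan is to compute $\ln P_\eps'(u)$ by splitting the canard trajectory into five phases and using the hypothesis $u\in U$ to force a near-balance between expansion along $M^+$ and contraction along $M^-$. Concretely, I factor the Poincar\'e map as $P_\eps=\widetilde{Q}_\eps\circ\Ppm\circ Q_\eps$, where $Q_\eps$ and $\widetilde{Q}_\eps$ handle the jump regions near $G^+$ and $G^-$, and $\Ppm$ covers the passage from $\Gapl$ to $\Gamn$. The middle map $\Ppm$ further decomposes into slow drift up $M^+$ from $y=\alpl$ to the exit level $y=\ypl$, a fast transverse jump from $\ypl$ to $\ymn=\ypl+O(\eps)$, and slow drift up $M^-$ from $\ymn$ to $y=\almn$.

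For the two jump regions, Theorem~\ref{thm-poincare-jump-est} and its remark yield
\[
\ln Q_\eps' = \frac{-C^+(\delpl)+O(\eps^\lambda)}{\eps},\qquad \ln \widetilde{Q}_\eps' = \frac{C^-(\delmn)+O(\eps^\lambda)}{\eps},
\]
with $C^\pm(\delpm)<0$ continuous and monotonically tending to zero as $\delpm\to 0^+$. For the slow segments, integrating the normal form $\dtx_{\pm}=a^\pm(y,\eps)x_\pm$ of Theorem~\ref{thm-normnearss} produces log-derivative contributions $\Phpl(\ypl,\eps)/\eps$ (expansion along $M^+$) and $-\Phmn(\ymn,\eps)/\eps$ (contraction along $M^-$); the minus sign arises because $\Phmn$ is defined with $\almn$ as its lower limit whereas the stable slow passage integrates from $\ymn$ up to $\almn$. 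The fast transverse crossing takes place outside every slow-curve neighborhood where $f$ is bounded away from zero; its duration is $O(\eps)$, so by Lemma~\ref{lem-robust} it contributes only $O(1)$ to the log-derivative, and moreover $|\ypl-\ymn|=O(\eps)$. Summing,
\[
\ln P_\eps'(u)=\frac{\Phpl(\ypl,\eps)-\Phmn(\ymn,\eps)-C^+(\delpl)+C^-(\delmn)}{\eps}+O(\eps^{\lambda-1})+O(1).
\]

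Because $u\in U$, the left-hand side is $O(1)$; multiplying through by $\eps$ yields
\[
\Phpl(\ypl,\eps)-\Phmn(\ymn,\eps)=C^+(\delpl)-C^-(\delmn)+O(\eps^{\min(\lambda,1)}).
\]
Here I would invoke the intermediate value theorem to choose $\delpm$ small with $C^+(\delpl)=C^-(\delmn)$, which is feasible since both constants are continuous, negative, and monotone as $\delpm\to 0^+$. With the jump constants cancelled and $\Phpm(\cdot,\eps)$ replaced by $\Phpm(\cdot,0)$ at cost $O(\eps)$, and using $\ymn=\ypl+O(\eps)$ together with the smoothness of $\Phmn$, this becomes $F(\ypl)=O(\eps^\lambda)$ for $F(y):=\Phpl(y,0)-\Phmn(y,0)$. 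By definition $F(\yptom)=0$, while $F'(y)=\apl(y,0)-\amn(y,0)>0$ since $\apl>0>\amn$; hence $F$ is a local diffeomorphism near $\yptom$ and the mean value theorem delivers $|\ypl-\yptom|=O(\eps^\lambda)$. The estimate $|\yptom-\ymn|=O(\eps^\lambda)$ then follows from the triangle inequality and $|\ypl-\ymn|=O(\eps)$.

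The principal difficulty is the matching of the two jump-region constants $C^\pm(\delpm)$: without the cancellation $C^+(\delpl)=C^-(\delmn)$, the trajectory-exit height $\ypl$ would be pinned instead to the root of the shifted equation $F(y)=C^+(\delpl)-C^-(\delmn)$, whose shift is an $\eps$-independent $O(1)$ constant, and $\ypl$ would not converge to $\yptom$ at all. The monotonicity and continuity of $C^\pm$ in Theorem~\ref{thm-poincare-jump-est} make the matching routine, but this is exactly the step that determines the ``appropriate choice of small $\delpm$'' in the statement. A minor secondary concern when $\lambda<1$ is that the error term $O(\eps^{\lambda-1})$ in each jump expression is unbounded; however, $C^\pm(\delpm)/\eps$ remains the leading part, and multiplying through by $\eps$ converts this error into the acceptable final bound $O(\eps^\lambda)$.
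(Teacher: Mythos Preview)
Your argument is correct and follows essentially the same route as the paper: the same three-fold factorization $P_\eps=\widetilde{Q}_\eps\circ\Ppm\circ Q_\eps$, the same use of Theorem~\ref{thm-poincare-jump-est} for the jump contributions, the same matching $C^+(\delpl)=C^-(\delmn)$, and the same observation that $\apl-\amn>0$ makes the balance equation invertible near $\yptom$. The only cosmetic difference is that the paper bounds $|\ln(\Ppm)'|$ from below by $c|\ypl-\yptom|/\eps$ via an integral estimate, whereas you phrase the final step through the function $F=\Phpl-\Phmn$ and the mean value theorem; these are the same computation.
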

\begin{proof}
Let us decompose the Poincar\'e map $P_\eps$:

\begin{gather*}
	P_\eps: \Gamma\stackrel{Q_\eps}{\to} \Gapl \stackrel{\Ppm}{\to} \Gamn
	\stackrel{\wtQ_\eps}{\to}\Gamma\\
	P_\eps=\wtQ_\eps\circ\Ppm\circ Q_\eps,
\end{gather*}
where $Q_\eps=R_\eps^{-1}$. The chain rule implies: 

\begin{equation}\label{eq-P-eps-prim-decomp}
	\ln P_\eps'=\ln \wtQ_\eps'\circ \Ppm \circ Q_\eps+\ln (\Ppm)'\circ Q_\eps+\ln Q_{\eps}'
\end{equation}
First, we estimate the second term of this sum from below.
Theorem~\ref{thm-normnearss} and variational equations imply: 

\begin{multline}
|\ln (P_{+}^{-})'|  = \left| \frac{1}{\eps} (\Phpl(\ypl,\eps)-\Phmn(\ymn,\eps))\right|=\\
=\left| \frac{1}{\eps} (\Phpl(\ypl)-\Phmn(\ymn))\right|+O(1)=\\
= \frac{1}{\eps}|\Phpl(\ypl)-\Phpl(\yptom)-\Phmn(\ymn)+\Phmn(\yptom)|+O(1) = \\
= \frac{1}{\eps} \left|-\int_{\ypl}^{\yptom} \apl(y,0) dy + \int_{\ymn}^{\yptom} \amn(y,0) dy \right| +O(1) >{} \\
{}> c\frac{|\ypl-\yptom|}{\eps},
\end{multline}
where $c=\frac{1}{2} \min_{\Sigma} (\apl(y,0)-\amn(y,0))>0$ since $\apl(y,0)>0$
and $\amn(y,\eps)<0$. 

According to theorem~\ref{thm-poincare-jump-est},

\begin{equation}
	\ln Q'_\eps = \frac{C_+(\delpl) + O(\eps^{\lambda})}{\eps}, \quad \ln\wtQ_\eps' = \frac{-C_-(\delmn)+O(\eps^{\lambda})}{\eps}
\end{equation}
It follows from the asymptotic behaviour of~$C_\pm$ that one can find
small~$\delpm$ such that $C_+(\delpl)=C_-(\delmn)$.  For such $\delpm$,
condition~\bref{eq-u-cond} implies:

\begin{equation}
	\ln 2\ge |\ln Q'_\eps + \ln \Ppm + \ln \wtQ_\eps'| >
	c\frac{|\ypl-\yptom|+O(\eps^\lambda)}{\eps}.
\end{equation}
Therefore,
\begin{gather}
	|\ypl-\yptom| = O(\eps) + O(\eps^\lambda)= O(\eps^\lambda),\quad \lambda>0\\
	\yptom=\ypl+\eps^\lambda k_1(\ypl,\eps),
\end{gather}
where $k_1(\ypl,\eps)$ is a smooth function.

\end{proof}

Let us take the derivative of~\bref{eq-P-eps-prim-decomp}:
\begin{equation}\label{eq-log-deriv-decomp}
	\dd{}{u}\log P'_\eps = \dd{}{u} \log \wtQ'_\eps \circ P_+^- \circ Q_\eps (u) + \dd{}{u} \log(P_+^-)' \circ Q_\eps(u) + \dd{}{u}\log Q'_\eps(u)
\end{equation}
We will show shat the sign of the lograthmic derivative of the Poincar\'e map
depends only on the sign of the second term in this expression. The other two
terms's influence can be estimated from above:

\begin{gather*}
\left|\dd{}{u}\log Q'_\eps(u)\right| < C\cdot \exp{\left(\frac{\delpl^{3/2}}{\eps}\right)}\\
\left|\dd{}{u}\log \wtQ'_\eps(u)\right| < C\cdot \exp{\left(\frac{\delmn^{3/2}}{\eps}\right)}
\end{gather*}
Proof of these estimates can be found in~\cite{GI}, p. 44. It only requires the
fact that~$f$ and its derivative are bounded and that assertion 2 of 
Proposition~\ref{prop-q-est} holds. Once it is assured, the proof
from~\cite{GI} works without any changes. 

\begin{proposition}\label{prop-Ppm-log-deriv-est}
Suppose $\eps$ and $u$ such that~\bref{eq-u-cond} holds. Let

\begin{equation}
	I=2 \Phpl(\yptom)=\Phpl(\yptom)+\Phmn(\yptom).
\end{equation}
Then assuming that~$\eps$ is sufficiently small, we have the following estimate: 

\begin{equation}\label{eq-mid-term-estimate}
	\left|\dd{}{u} \ln (P_{+}^{-})' \circ Q_\eps(u)\right| > \exp \frac{I}{5 \eps}
\end{equation}
\end{proposition}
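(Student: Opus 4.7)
The plan is to use the normalizing charts of Theorem~\ref{thm-normnearss} near $M^+$ and $M^-$ to write an explicit formula for $\ln|(\Ppm)'|$, differentiate it, and reduce the desired estimate to a lower bound on $\Qeps'(u)/|\Qeps(u)|$ that can be read off from Theorem~\ref{thm-poincare-jump-est} and Lemma~\ref{lem-yptom-est}.

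First I would split $\Ppm$ as a composition of three Poincar\'e maps: from $\Gapl$ to the exit face $\{|\xpl|=b\}$ of $\Upl$, where the linear system $\dtx_+=\apl(y,\eps)\xpl$, $\dty=\eps$ integrates exactly; across the fast gap to the entry face $\{|\xmn|=b\}$ of $\Umn$, which is a diffeomorphism with bounded derivatives; and then linearly down to $\Gamn$. Letting $\ypl$, $\ymn$ denote the $y$-coordinates at the two faces and $g=d\ymn/d\ypl$ the (bounded, positive) Jacobian of the gap transition, a direct calculation in these charts gives
\begin{equation}
\ln|(\Ppm)'(x)|=\frac{\Phpl(\ypl,\eps)-\Phmn(\ymn,\eps)}{\eps}+\ln\left|\frac{\amn(\ymn)\,g(\ypl)}{\apl(\ypl)}\right|,
\end{equation}
where $\ypl$ is determined implicitly by $|x|\exp(\Phpl(\ypl,\eps)/\eps)=b$. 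Differentiating this in $x$, using $d\ypl/dx=-\eps/(x\apl(\ypl))$ and $d\ymn/dx=g\cdot d\ypl/dx$, the main term comes out as
\begin{equation}
\dd{}{x}\ln|(\Ppm)'(x)|=\frac{\amn(\ymn)g(\ypl)-\apl(\ypl)}{x\,\apl(\ypl)}+O(\eps/x).
\end{equation}
Since $\apl>0$, $\amn<0$ and $g>0$, the absolute value of the leading coefficient is bounded below by a fixed constant $c>0$, giving $|\dd{}{x}\ln|(\Ppm)'(x)||\geq c/|x|$ for small $\eps$.

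The chain rule, together with the bounded-derivative smooth chart change between the toral $x$-coordinate on $\Gapl$ and the normalizing coordinate $\xpl$, then yields $|\dd{}{u}\ln|(\Ppm)'(\Qeps(u))||\geq C\cdot\Qeps'(u)/|\Qeps(u)|$ for some $C>0$. To finish, I would estimate the two factors. From $|\xpl|=b\exp(-\Phpl(\ypl,\eps)/\eps)$ and Lemma~\ref{lem-yptom-est} ($\ypl=\yptom+O(\eps^\lambda)$) I obtain $\ln|\Qeps(u)|=-I/(2\eps)+O(\eps^{\lambda-1})$, since $\Phpl(\yptom,0)=I/2$. Theorem~\ref{thm-poincare-jump-est}, applied to $\Qeps=\Reps^{-1}$, yields $\ln\Qeps'(u)=(|C^+(\delpl)|+O(\eps^\lambda))/\eps$. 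Consequently
\begin{equation}
\ln\frac{\Qeps'(u)}{|\Qeps(u)|}\geq\frac{|C^+(\delpl)|+I/2+O(\eps^\lambda)}{\eps},
\end{equation}
and since $|C^+(\delpl)|+I/2>I/5$ is a fixed positive quantity, the right-hand side exceeds $I/(5\eps)$ for all sufficiently small $\eps$, proving the proposition.

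The main obstacle I anticipate is controlling the $O(\eps^{\lambda-1})$ error in $\ln|\Qeps|$ coming from the $O(\eps^\lambda)$ uncertainty in $\ypl$; this correction is unbounded as $\eps\to 0^+$ when $\lambda<1$. It is absorbed thanks to the fixed positive margin $|C^+(\delpl)|+3I/10$ between the natural decay rate of $|\Qeps|$ and the target rate $I/5$: a positive constant multiplied by $1/\eps$ dominates any $O(\eps^{\lambda-1})$ as $\eps\to 0^+$. A secondary technical point is verifying boundedness of the fast-gap Jacobian $g$ away from zero, which follows from uniform transversality of the fast flow to horizontal cross-sections in the region where $f$ is bounded away from $0$.
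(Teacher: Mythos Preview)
Your proposal is correct and follows essentially the same route as the paper: both work in the normalizing charts near $M^\pm$ to reduce the estimate to a lower bound of the form $c/|\xi|$ on the logarithmic second derivative of $\Ppm$, then use Lemma~\ref{lem-yptom-est} (giving $\Phpl(\ypl)\approx I/2$) to bound $|\xi|=b\exp(-\Phpl(\ypl)/\eps)$ from above, and finish via the chain rule with an estimate on $Q_\eps'$. The only cosmetic differences are that the paper first rewrites $\eta(\xi)=-(b^2/\xi)\exp\bigl((-I+\eps^\lambda k)/\eps\bigr)$ before differentiating, whereas you differentiate $\ln|(\Ppm)'|$ directly, and that for the final $Q_\eps'$ factor the paper cites Proposition~\ref{prop-q-est} while you invoke Theorem~\ref{thm-poincare-jump-est}; your choice is in fact cleaner, since Theorem~\ref{thm-poincare-jump-est} gives the \emph{lower} bound $\ln Q_\eps'\ge (|C^+(\delpl)|+O(\eps^\lambda))/\eps>0$ that the chain-rule step actually needs.
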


It is easy to see that provided~\eqref{eq-mid-term-estimate} the second term of
the sum~\bref{eq-log-deriv-decomp} dominates. Thus, when
trajectories jump down, the whole
expression~\bref{eq-log-deriv-decomp} is positive. Similar arguments show that the derivative is
negative if trajectories jump up. The rest of the section is devoted to
the proof of Proposition~\ref{prop-Ppm-log-deriv-est}, which thus concludes the
proof of Lemma~\ref{lem-conv}. 

\begin{proof}
Let us consider the normalizing chart $\xi$ on the cross-section~$\Jpl$  near
the slow curve, and the normalizing chart  $\eta$ on the cross section~$\Jmn$.
For the trajectory that intersects $\Jpl$ in~$\xi$, we denote its intersection
with~$\Jmn$ by~$\eta(\xi)$.  In the case we are considering (when trajectories
jump down) $\xi$ is negative. The function $\xi \mapsto \eta(\xi)$ defines the
Poincar\'e map $P_{+}^{-}$ in normalizing charts. As direct calculations show
(see \cite{GI}, p. 43), we have:

\begin{equation}\label{eq-eta-of-xi}
	\eta(\xi) = -\xi \exp\left( \frac{\Phpl(\ypl,\eps) - \Phmn(\ymn,\eps)}{\eps}\right)
\end{equation}
However, according to~\bref{eq-yptom-est}:

\begin{multline}\label{eq-Phpl-Phmn}
\Phpl(\ypl,\eps)-\Phmn(\ymn,\eps) - (2 \Phpl(\ymn,\eps) - I) =\\
=\Phpl(\ypl,\eps)-\Phmn(\ymn,\eps) - (2 \Phpl(\ymn,\eps) -
\Phpl(\yptom)-\Phmn(\yptom)) =\\
=\Phpl(\ypl)-\Phmn(\ypl) - (2 \Phpl(\ypl) -
\Phpl(\yptom)-\Phmn(\yptom))+O(\eps) =\\
= (- \Phmn(\ypl) + \Phmn(\yptom)) + (- \Phpl(\ypl) + \Phpl(\yptom)) + O(\eps)=\\
=\eps^\lambda k_2(\ypl,\eps),
\end{multline}
where $k_2(\ypl,\eps)$ is a smooth function.

On the other hand, by definition of~$\ypl$,

\begin{equation}\label{eq-by-def-ypl}
\exp \left(\frac{\Phpl(\ypl,\eps)}{\eps}\right)=-\frac{b}{\xi}.
\end{equation}
The minus sign is due to $\xi<0$ in the case we are considering. It will be
opposite in the other case (when trajectory jumps up).

Substituting $\Phpl(\ypl,\eps)-\Phmn(\ymn,\eps)$ into~\eqref{eq-eta-of-xi}
with the expression which follows from~\eqref{eq-Phpl-Phmn}, and using~\eqref{eq-by-def-ypl}, we obtain:

\begin{equation}\label{eq-etof-xi}
\eta(\xi) = - \frac{b^2}{\xi} \exp\left( \frac{-I+\eps^\lambda k(\ypl,\eps)}{\eps}\right),
\end{equation}
where $k(\ypl,\eps)$ is a smooth function.

Let us show that $\eta(\xi)$ behaves like $-\const\cdot \xi^{-1}$,
i.e. it is convex for negative~$\xi$.

Equation~\bref{eq-by-def-ypl} implies that 

\begin{equation}
	\Phpl(\ypl,\eps)=\eps(\ln b-\ln(-\xi)).
\end{equation}
The function $\Phpl(\ypl,\eps)$ is strictly~$\ypl$-monotonic due
to nondegeniricity condition~\eqref{eq-nondeg-nondeg}. Thus there exists an
inverse function. Denote it by~$z_\eps$:

\begin{equation}
	z_\eps(\Phpl(\ypl,\eps))\equiv\ypl.
\end{equation}
Then

\begin{equation}\label{eq-ypl-via-xi}
	\ypl = z_\eps(\eps(\ln b - \ln (-\xi))).
\end{equation}
We substitute~\bref{eq-ypl-via-xi} into \bref{eq-etof-xi} and take the derivative:

\begin{gather*}
	\eta'(\xi) = b^2 e^{-I/\eps} \frac{1}{\xi^2} e^{(\eps^{\lambda-1} k(\ypl,\eps))}(k'z'_\eps \eps^\lambda +1),\\
	\ln \eta'(\xi) = 2 \ln b - \fr{I}{\eps} - 2 \ln (-\xi) + \eps^{\lambda-1} k(\ypl,\eps) + \ln (k'z'_\eps \eps^\lambda +1),\\
	\dd{}{\xi} \ln \eta'(\xi) = - \frac{2}{\xi} - \eps^\lambda k'z' \frac{1}{\xi} - \frac{\eps^{\lambda+1}}{\xi} \frac{k'' (z')^2 + k' z''}{1+k'z'\eps^\lambda} > \frac{1}{-\xi}.
\end{gather*}
Obviously,

\begin{equation}
	\Phpl(\ypl) > \frac{I}{3}.
\end{equation}
By \bref{eq-by-def-ypl}, we have

\begin{equation}
	-\xi = b \exp \frac{-\Phpl(\ypl,\eps)}{\eps}
\end{equation}
Hence,
\begin{equation}
	\dd{}{\xi}\log \eta'(\xi) > -\frac{1}{\xi} > \frac{1}{b} \exp \frac{I}{3 \eps}
\end{equation}
The transition from the normalizing charts back to the initial charts may only
multiply the derivative by a bounded function and will not considerably affect
the exponential estimate we just obtained:
\begin{equation}
	\dd{}{u} \ln (\Ppm)'>  \frac{1}{b} \exp \frac{I}{4 \eps}
\end{equation}
The chain rule and the estimate from assertion 2 of proposition~\ref{prop-q-est} imply that
\begin{multline}
	\dd{}{u} \ln (\Ppm)'\circ Q_\eps(u)=\dd{}{x} (\ln (\Ppm)')\cdot
	Q_\eps'(u) >\\
	> \frac{1}{b} \exp \left(\frac{I+o(1)}{4
	\eps}\right) > \frac{1}{b} \exp \left( \frac{I}{5\eps} \right).
\end{multline}
For trajectories that jump up, the corresponding estimate takes the form:

\begin{equation}
	\dd{}{u} \ln (\Ppm)'\circ Q_\eps(u)<-\frac{1}{\xi}< -\frac{1}{b} \exp \left( \frac{I}{5\eps} \right).
\end{equation}

\end{proof}

\subsection{Monotonicity: proof of lemma~\ref{lem-monot}}\label{ssec-monot}
In this section we prove lemma~\ref{lem-monot}. Let us first ensure that the
following assertions are fulfilled:

\begin{enumerate}
		\item	$\dde(x-y)(\bar C_\eps) \to \infty\text{ as } \eps\to 0^+$ for any
			choice of $\bar C_\eps=\Aepmb;\ \Eepmb$

	\item	The equation~$(y-x)(\Eemnb)= 2\pi n$ has root~$\eps=\eps_n$ for any~$n$, and $\eps_n=O(1/n)$. 
\end{enumerate}
	
Consider the cross-section~$\Gamma_0:=\{y=y_0\}$ for some~$y_0\in
[\delpl,\delmn]$. Consider the Poincar\'e map from~$\Ga_0$
to~$\Gamma=\{y=\pi\}$ in forward and reverse times: 

\begin{gather*}
	\Remn(x;y_0):=\lPeab{y_0}{-\pi}(x), \\
	\Repl(x;y_0):=\rPeab{y_0}{+\pi}(x).
\end{gather*}
We lift these maps from the circle $S^1_x$ to the universal cover $\mathbb
R^1_x$ continuosly in~$\eps$, and denote the result by $\bRepm$ ($y_0$ is
considered a fixed parameter).

The proof of the lemma is based on the following proposition:
\begin{proposition} 
	\label{prop-monot}
	One can find positive constants~$C^\pm$, such that for any fixed $x_0\in S^1$
	and any $\eps>0$ small enough, the following facts hold:
	\begin{enumerate}
		\item \label{en-semiorbit-monot} $\dd{\bRepm(x_0;y_0)}{\eps} \to \mp \infty$ as $\eps\to 0^+$,
		\item \label{en-semiorbit-order-p} $\bRepl(x_0;y_0)=\fr{C^{+}+O(\delpl^2)+O(\eps)}{\eps}$,
		\item \label{en-semiorbit-order-m} $\bRemn(x_0;y_0)=\fr{-C^{-}+O(\delmn^2)+O(\eps)}{\eps}$;
	\end{enumerate}
\end{proposition}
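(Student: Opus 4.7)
\begin{pf_outline}
The plan is to decompose each of $\bRepm$ along the cross-sections $\Gapm$, then apply the two normalisation tools already prepared: Theorem~\ref{thm-nonlin-trans} in the strips where $f$ is bounded away from zero, and Theorem~\ref{thm-normnearss} in a neighbourhood of the slow curve. The symmetry between $\bRepl$ (forward in time, crossing the straight jump at $G^-$) and $\bRemn$ (backward, crossing the inverse jump at $G^+$) lets me concentrate on $\bRepl$; the other case is identical up to time reversal.

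For the leading asymptotics (assertions~\ref{en-semiorbit-order-p}--\ref{en-semiorbit-order-m}), I split
$$\bRepl=\bar{P}_3\circ\bar{P}_2\circ\bar{P}_1,$$
where $\bar P_1$ lifts $\rPeab{y_0}{\almn}$ (motion inside the slow-curve strip: exponential attraction onto $\Semn$ followed by motion along its graph), $\bar P_2$ lifts $\rPeab{\almn}{\tmn+\delmn}$ (jump across~$G^-$), and $\bar P_3$ lifts $\rPeab{\tmn+\delmn}{\pi}$ (upper strip where $f$ has a definite sign). The Fenichel theorem and Theorem~\ref{thm-normnearss} yield $\bar P_1(x_0)-x_0=O(1)$ because $\Semn$ is a bounded graph. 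Lemma~\ref{lem-robust} together with the quadratic geometry of $M$ near $G^-$ (condition \bref{eq-nondeg-main-wlog}) yields $\bar P_2(x_0)-x_0=O(\delpl^2)$. For $\bar P_3$, Theorem~\ref{thm-nonlin-trans} gives
$$\bar P_3(x_0)=\bar G^1_\eps\bigl(\bar G^2_\eps(x_0)+T(\eps)\bigr),\qquad T(\eps)\to+\infty,$$
with $\bar G^{1,2}_\eps$ lifts of uniformly bounded circle diffeomorphisms. To extract the leading term of $T(\eps)$, I integrate $dt=dx/f$ along a reference trajectory: writing the $x$-displacement as $(1/\eps)\int_{\tmn+\delmn}^{\pi}f(x(y),y,\eps)\,dy$ and invoking the averaging principle for the fast oscillation of $x$, I obtain
$$T(\eps)=\frac{C^+}{\eps}+O(1),\qquad C^+=2\pi\int_{\tmn}^{\pi}\frac{dy}{\oint du/f(u,y,0)}>0.$$
Combining the three contributions proves assertion~\ref{en-semiorbit-order-p}; the symmetric decomposition through $\Gapl$ and $G^+$ gives assertion~\ref{en-semiorbit-order-m}, with an opposite sign due to time reversal.

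For assertion~\ref{en-semiorbit-monot}, I differentiate $\bRepl=C^+/\eps+h_\eps(x_0;y_0)$ in $\eps$. The leading term contributes $-C^+/\eps^2\to-\infty$, so the claim reduces to $\dd{h_\eps}{\eps}=o(1/\eps^2)$. I would obtain this estimate by applying the variational equation in $\eps$ to each piece $\bar P_i$ inside the normalizing coordinates of Theorems~\ref{thm-nonlin-trans} and~\ref{thm-normnearss}: in each chart the $\eps$-dependence is $C^1$-bounded, so the $\eps$-derivative of each piece is at worst of order $1/\eps$, dominated by $-C^+/\eps^2$. For $\bRemn$ the sign of the leading constant is opposite and $\dd{\bRemn}{\eps}\to+\infty$.

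The main obstacle is the last step: the $C^\pm/\eps$ asymptotic falls out of Theorem~\ref{thm-nonlin-trans} rather cleanly, but monotonicity requires a $C^1$-in-$\eps$ estimate on the remainder, and carrying the variational equation through the jump neighbourhood---where the normalizing charts degenerate---is the genuinely technical part.
\end{pf_outline}
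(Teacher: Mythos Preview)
The paper does not prove this proposition itself; it cites \cite{GI} (pp.~45--46, the analysis of the points $\bar d_\eps^{-}$ and $\bar A_\eps^\pm$ there) and asserts that the argument extends verbatim. Your outline---attraction onto $\Semn$, passage through the fold, then the nonlinear transition of Theorem~\ref{thm-nonlin-trans} in the fast strip---is precisely the architecture of that reference, so there is no meaningful difference in approach to report.

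Two places where your outline needs tightening. First, Lemma~\ref{lem-robust} controls $|\ln(\bar P_2)'|$, not the displacement $\bar P_2(x_0)-x_0$; to bound the latter you must integrate $f/\eps$ across the fold strip directly, and the relevant small parameter there is $\delmn$ (the jump you cross is at $G^-$), not $\delpl$. Second, for assertion~\ref{en-semiorbit-monot} your plan---differentiate the leading $C^+/\eps$ and show the remainder's $\eps$-derivative is $o(1/\eps^2)$---is the right strategy, but Theorem~\ref{thm-nonlin-trans} as stated gives only $G_\eps^{1,2}\to G_{1,2}$ with no $C^1$-in-$\eps$ rate, and the assertion $T'(\eps)=-C^+/\eps^2+O(1/\eps)$ is essentially the content of the claim rather than an input you can quote. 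Your closing diagnosis that the fold neighbourhood is where the genuine work lies is accurate; in \cite{GI} this step is done by direct computation in the explicit model system, and the ``verbatim'' extension here means redoing that variational-equation estimate with the nondegeneracy conditions~\bref{eq-nondeg-main-wlog} replacing the explicit formulas.
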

This proposition is proved in~\cite{GI}, (see the proof for
points~$\bar{d}_\eps^{-}$ and $\bar{A}^\pm_\eps$, pp.~45--46) for some
particular system, but the proof can be extended to our case verbatim.

\begin{proof}[Proof of Lemma~\ref{lem-monot}]
	Let us recall that
	\begin{gather*}
		\Aemn:=(\pepl,P_\eps(\pepl)),\quad
		\Aepl:=(\qepl,P_\eps(\qepl)),\\
		\Eemn:=(\pepl,\qemn),\quad \Eepl:=(\qepl,\pemn).
	\end{gather*}
	By definition, $\Depm=[\pepm,\qepm]$ (see subsection~\ref{ssec-shape} and
	fig.~\ref{fig-general-view} on page~\pageref{fig-general-view}),

	\begin{gather*}
		\pepl=\Remn(\spl;\delpl),\quad \qepl=\Remn(+\pi;\delpl),\\
		\pemn=\Repl(-\pi;\delmn),\quad \qemn=\Repl(\smn;\delmn).
	\end{gather*}
	It is obvious that	
	\begin{equation*}
	 P_\eps(\pepl)=\Repl(\spl;\delpl),\quad
	 P_\eps(\qepl)=\Repl(+\pi;\delpl).
	\end{equation*}
	Therefore,	

	\begin{equation}\label{eq-x-y-est}
		\begin{array}{rclll}
			(x-y)(\Aemn)&=&\Remn(\spl;\delpl)&-&\Repl(\spl;\delpl),\\
			(x-y)(\Aepl)&=&\Remn(+\pi;\delpl)&-&\Repl(+\pi;\delpl),\\
			(x-y)(\Eemn)&=&\Remn(\spl;\delpl)&-&\Repl(\smn;\delmn),\\
			(x-y)(\Eepl)&=&\Remn(+\pi;\delpl)&-&\Repl(-\pi;\delpl).
		\end{array}
	\end{equation}
Using~\bref{eq-x-y-est}, it is easy to show that assertion 1 of the lemma follows
from assertion~\ref{en-semiorbit-monot} of proposition~\ref{prop-monot}, and
assertion 2 of the lemma follows from the assertions~\ref{en-semiorbit-order-p}
and~\ref{en-semiorbit-order-m} of the same proposition.
\end{proof}

\section{Influence of the jump point: proof of technical propositions}\label{sec-technique}

\subsection{Dynamics near jump point}\label{ssec-jump-est-proof}
\subsubsection{Composition of the Poincar\'e maps}\label{sssec-poincare-comp}
In this section we will prove Theorem~\ref{thm-poincare-jump-est}. Let us show
that for some~$\la>0$, derivative of the Poincar\'e map
$\Reps=Q_\eps^{-1}=\lPeab{\alpl}{-\pi}$ can be written in the following form: 

\begin{equation}
	\ln
	\Reps'(x)=\frac{C(\delpl)+O(\eps^\lambda)}{\eps},
\end{equation}
where~$C(\delpl)<0$ is continuous and 
tends to 0 monotonically as ~$\delpl\to0^+$.

For some $\eps>0$, consider the two cross-sections 
\begin{gather}
	\Gaeps^1=\{(x,y) \mid y=\tpl-\eps^\nu\},
	\Gaeps^2=\{(x,y) \mid y=\tpl+\eps^\mu\},
\end{gather}
where~$\mu, \nu\in(0,1)$ are constants to be defined later.

Consider~$\Reps$ as a composition:

\begin{gather}
	\Reps: \Gapl \stackrel{\Reps^1}{\to} \Gaeps^2
	\stackrel{\Reps^2}{\to} \Gaeps^1 \stackrel{\Reps^3}{\to} \Gamma\\
	\label{eq-R-3-2-1}	\Reps=\Reps^3\circ \Reps^2 \circ \Reps^1
\end{gather}
Take the logarithmic derivative of~\eqref{eq-R-3-2-1}:

\begin{equation}\label{eq-lnRprim-decomp}
	\ln \Reps'=\ln (\Reps^3)'\circ \Reps^2\circ \Reps^1+\ln (\Reps^2)'\circ \Reps^1
	+\ln (\Reps^1)'.
\end{equation}
Lemma~\ref{lem-robust} implies that the second term of the sum
is~$O(\eps^{\min(\mu,\nu)-1})$. We will prove that

\begin{gather}
	|\ln (\Reps^3)'|<C\eps^{\nu-1},\\
	\ln (\Reps^1)'=\fr{C(\delpl)+O(\eps^\mu)}{\eps},\quad
	C(\delpl)<0.
\end{gather}
Taking~$\la=\min(\mu,\nu)$ and using these estimates
in~\bref{eq-lnRprim-decomp}, we obtain the the desired estimate and thus
complete the proof of theorem~\ref{thm-poincare-jump-est}.

\subsubsection{Dynamics far from the true slow curve}
The following proposition generalizes theorem~\ref{thm-nonlin-trans}. It allows
us to estimate the derivative of the Poincar\'e map from some given cross-section
to the the cross-section which slowly approaches the jump point.

Let us move the origin to~$\Gpl=(\spl,\tpl)$. Then
cross-section~$\Gamma=\{y=-\pi\}$ becomes $\{y=a\}$ for some $a>0$ (which can be
chosen arbitrary by an appropriate coordinate change).

Define the map:

\begin{equation}
	\Qeps^3=\inv{\Reps^3}=\rPeab{a}{-\eps^\nu}: \Ga\to\Gaeps^1.
\end{equation}

\begin{proposition}\label{prop-Reps-estimate}
	The derivative of~$\Qeps^3$ can be estimated as follows: 
	\begin{equation}\label{eq-Reps-est}
		|\ln (\Qeps^3)'|=|\ln (\Reps^3)'|< C \eps^{\nu-1}.
	\end{equation}
\end{proposition}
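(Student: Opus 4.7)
The plan is to split the map $\Qeps^3 = \rPeab{a}{-\eps^\nu}$ at a fixed intermediate cross-section $\{y = -c\}$, where $c > 0$ is a small constant independent of $\eps$, so that the strip $\{-c \le y \le -\eps^\nu\}$ lies in a small neighborhood of the jump point $G^+ = (0,0)$ while the region $\{a \le y \le -c\}$ (in the shifted coordinates) stays in the fast region bounded away from the slow curve, where $f$ is uniformly bounded below by a positive constant. Write $\Qeps^3 = \Psi_2 \circ \Psi_1$ with $\Psi_1 = \rPeab{a}{-c}$ and $\Psi_2 = \rPeab{-c}{-\eps^\nu}$, and bound each factor separately.

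For $\Psi_1$, Theorem~\ref{thm-nonlin-trans} applies directly: both cross-sections are at fixed $y$-values and $f > 0$ uniformly in the region between them, so one obtains a representation $\Psi_1 = G_\eps^1 \circ (G_\eps^2 + T(\eps))$ in which $G_\eps^{1,2}$ are diffeomorphisms with uniformly bounded derivatives as $\eps \to 0^+$. Hence $|\ln \Psi_1'| = O(1)$, which is trivially dominated by $C\eps^{\nu-1}$ for small $\eps$.

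For $\Psi_2$ the upper cross-section depends on $\eps$ and approaches the jump point, so a direct application of Lemma~\ref{lem-robust} would only give $O(1/\eps)$, which is too weak. The idea is to extend the rectification argument behind Theorem~\ref{thm-nonlin-trans} into the narrow strip. The key quantitative ingredient is the local form of $f$ near $G^+ = (0,0)$, derived from the nondegeneracy assumption~\bref{eq-nondeg-main-wlog}:
\begin{equation*}
	f(x, y, \eps) = -f'_y(G^+)\cdot y + \tfrac12 f''_{xx}(G^+)\cdot x^2 + O(|y|^2 + |x|^3 + \eps),
\end{equation*}
which yields the lower bound $f(x, y, \eps) \ge C(|y| + x^2) \ge C\eps^\nu$ throughout the strip $\{-c \le y \le -\eps^\nu\}$. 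With this positive lower bound the flow remains rectifiable in the strip, giving an analogue of Theorem~\ref{thm-nonlin-trans} in which the rectifying diffeomorphism on $\{y = -\eps^\nu\}$ has derivative of size at most $O(\eps^{-\nu})$ near $x = 0$ (and uniformly bounded elsewhere). Tracking these estimates through the formula for $\Psi_2$ produces $|\ln \Psi_2'| = O(|\ln \eps|)$, which is dominated by $C\eps^{\nu-1}$ for any fixed $\nu \in (0, 1)$.

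Combining the two pieces via the chain rule gives $|\ln (\Qeps^3)'| \le C\eps^{\nu-1}$, as required. The main obstacle is the adaptation of Theorem~\ref{thm-nonlin-trans} to the $\eps$-dependent upper cross-section: one has to carefully track how the rectifying diffeomorphism degenerates near $x = 0$ (where $f$ becomes small of order $\eps^\nu$) and verify that the resulting logarithmic blow-up stays within the claimed polynomial bound, rather than propagating into a worse estimate of order $1/\eps$.
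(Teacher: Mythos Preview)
Your route is genuinely different from the paper's. The paper does not split at a fixed horizontal level and attempt to extend Theorem~\ref{thm-nonlin-trans}; instead it passes to the \emph{vertical} cross-section $\Sigma=\{x=0\}$, writes $Q_\eps^3=\gaeps^{-1}\circ F_{N-1}\circ\gamma$ where $F_{N-1}=\psi_{N-1}\circ\cdots\circ\psi_0$ is a composition of $N\sim 1/\eps$ iterates of the first-return map $\psi:\Sigma\to\Sigma$, and controls the distortion of $F_{N-1}$ via the Denjoy--Schwartz Distortion Lemma. The required inputs are direct estimates $\psi'\ge 1/2$, $\psi''\le C\eps^{1-4\nu}$, and $\sum|J_k|\le C$, obtained from variational equations for the system rewritten with $\dot x=1$. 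The correspondence maps $\gamma$ and $\gaeps$ between horizontal and vertical sections are then bounded separately; it is the $\gaeps$-term that contributes the dominant $\eps^{\nu-1}$.

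Your treatment of $\Psi_1$ is fine. The gap is in $\Psi_2$. You correctly note that the rectifying chart on $\{y=-\eps^\nu\}$ has derivative of order $\eps^{-\nu}$, which would indeed give $|\ln\Psi_2'|=O(|\ln\eps|)$ \emph{provided} the Poincar\'e map in rectified coordinates has derivative $1+o(1)$. But that proviso is exactly the hard step, and you do not establish it. In the rectified system $\dot\xi=1/P(y)+\text{error}$, the error involves both $(f(x,y,\eps)-f(x,y,0))/f$ and $\eps\,\xi_y$; the latter is of order $\eps^{1-3\nu/2}$ near the fold, and the trajectory spends time of order $1/\eps$ in the strip, so one must argue carefully that these do not combine into an $O(1/\eps)$ contribution to $\ln\Psi_2'$. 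Controlling precisely this accumulated distortion over many revolutions in $x$ is what the Distortion Lemma machinery supplies in the paper's proof. Your plan is plausible and, if completed, would likely yield the sharper bound $O(|\ln\eps|)$; but as it stands the crucial analytic step is only identified (``the main obstacle''), not carried out.
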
	
Proposition~\ref{prop-Reps-estimate} is proved in subsection~\ref{ssec-Reps-estimate-proof}.

\subsubsection{True slow curve near the jump point}

\begin{proposition}\label{prop-sf-near-fold-est}
Assume that the conditions of Proposition~\ref{prop-Reps-estimate} are satisfied
for any~$u\in\Gapl$ in some neighborhood of the slow curve~$M^+$. Then, the
following equality holds: 
	\begin{equation}\label{eq-q-final-est}
		\ln (\Reps^1)'(u)=\fr{C(\delpl)+O(\eps^\mu)}{\eps},
	\end{equation}
	where~$C(\delpl)<0$ is continuous and tends to 0
	monotonically as~$\delpl\to0$.
\end{proposition}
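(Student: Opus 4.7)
The strategy is to reduce the computation of $(R_\eps^1)'$ to an explicit integral via the normal form of Theorem~\ref{thm-normnearss}. In the normalizing chart near $M^+$, the system becomes $\dtx = a(y,\eps) x$, $\dty = \eps$, so the Poincar\'e map $R_\eps^1$ from $\{y=\alpl\} = \{y = \tpl + \delpl\}$ to $\{y = \tpl + \eps^\mu\}$ (moved in reverse time) is \emph{linear} and its derivative is the constant
\begin{equation*}
	(R_\eps^1)'(u) \;=\; \exp\!\left(-\,\fr{1}{\eps}\int_{\tpl+\eps^\mu}^{\tpl+\delpl} a(y,\eps)\,dy\right).
\end{equation*}
Thus the whole task is to estimate $\int_{\tpl+\eps^\mu}^{\tpl+\delpl} a(y,\eps)\,dy$ with the precision claimed in~\bref{eq-q-final-est}.

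\textbf{Estimating the integral.} Since $a(y,\eps)=a(y,0)+O(\eps)$ uniformly on the (bounded) interval, the $\eps$-dependence of $a$ contributes $O(\eps)$. Writing
\begin{equation*}
	\int_{\tpl+\eps^\mu}^{\tpl+\delpl} a(y,0)\,dy \;=\; \int_{\tpl}^{\tpl+\delpl} a(y,0)\,dy \;-\; \int_{\tpl}^{\tpl+\eps^\mu} a(y,0)\,dy,
\end{equation*}
I set $\widetilde C(\delpl):=\int_{\tpl}^{\tpl+\delpl} a(y,0)\,dy$. The nondegeneracy assumption~\bref{eq-nondeg-main-wlog} together with the asymptotics $a(y,0)=(C+o(1))\sqrt{y-\tpl}$ already derived in~\bref{eq-a-asy} shows that $\widetilde C(\delpl)>0$, depends continuously on $\delpl$, and decreases monotonically to $0$ as $\delpl\to 0^+$. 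The same asymptotics bound the second piece:
\begin{equation*}
	\int_{\tpl}^{\tpl+\eps^\mu} a(y,0)\,dy \;=\; O(\eps^{3\mu/2}) \;=\; O(\eps^\mu),
\end{equation*}
since $\mu\in(0,1)$. Combining, $\int_{\tpl+\eps^\mu}^{\tpl+\delpl} a(y,\eps)\,dy = \widetilde C(\delpl) + O(\eps^\mu)$, and setting $C(\delpl):=-\widetilde C(\delpl)<0$ yields~\bref{eq-q-final-est} in the normalizing chart.

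\textbf{Return to the original coordinates.} The change of variables back from the normalizing chart is a smooth diffeomorphism whose derivative at the points of the fixed cross-section $\Gapl$ and at $\Gaeps^2$ is bounded uniformly in $\eps$ between two positive constants; hence it contributes at most an $O(1)$ additive term to $\ln (R_\eps^1)'$, which rewrites as $\eps/\eps$ and is absorbed into $O(\eps^\mu)/\eps$ because $\mu<1$. Monotonicity and continuity of $C(\delpl)$ survive this bounded multiplicative distortion, since the distortion factor itself does not depend on $\delpl$.

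\textbf{Main obstacle.} The normal form Theorem~\ref{thm-normnearss} is stated only outside a fixed small neighborhood of the jump point, whereas the lower endpoint $y=\tpl+\eps^\mu$ of our interval collapses to $G^+$ as $\eps\to 0^+$. The delicate step is therefore to verify that the normalization (and consequently the linear representation of $R_\eps^1$) is still valid, with uniform bounds on the normalizing diffeomorphism, down to $y=\tpl+\eps^\mu$; this is exactly the content of the hypothesis in Proposition~\ref{prop-sf-near-fold-est} that ``the conditions of Proposition~\ref{prop-Reps-estimate} are satisfied in some neighborhood of $M^+$''. Provided this extended normalization holds --- which is where the exponent $\mu$ must be chosen small enough relative to the Fenichel/fold scale --- the asymptotic expression~\bref{eq-a-asy} for $a$ continues to apply on the whole range and the estimate above closes.
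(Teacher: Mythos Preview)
Your argument has a real gap at precisely the point you flag as the ``main obstacle''. The hypothesis of Proposition~\ref{prop-sf-near-fold-est} does \emph{not} assert that the normal form of Theorem~\ref{thm-normnearss} extends down to the $\eps$-dependent level $y=\tpl+\eps^\mu$; it refers to the conditions of Proposition~\ref{prop-Reps-estimate}, which concern $R_\eps^3$ (the passage \emph{far from} the slow curve) and say nothing about normalization near $M^+$. Hence your formula $(R_\eps^1)'=\exp\bigl(-\eps^{-1}\int_{\tpl+\eps^\mu}^{\tpl+\delpl}a(y,\eps)\,dy\bigr)$ is unjustified: Theorem~\ref{thm-normnearss} supplies the linear model only outside a \emph{fixed} neighbourhood of $G^+$, and both the normalizing diffeomorphism and the asymptotics~\bref{eq-a-asy} are established only there. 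In particular your claim $a(y,\eps)=a(y,0)+O(\eps)$ ``uniformly on the interval'' is not available near the lower endpoint: by~\bref{eq-afx}, $a$ is built from the true slow curve $s^+(y,\eps)$, and near the fold one has only $s^+(y,\eps)=s^+(y)+O(\eps^{2/3-\mu/2})$ (Theorem~\ref{thm-mr-tsc-est}), not $O(\eps)$. The same issue undermines your ``return to original coordinates'' step, since the normalizing chart is not known to have uniformly bounded derivative at the moving cross-section $\Gaeps^2$.

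The paper circumvents this by inserting a \emph{fixed} intermediate cross-section $\Gamma'=\{y=\delta'\}$ with $\eps^\mu<\delta'\ll\delpl$ and splitting $R_\eps^1=H_\eps^2\circ H_\eps^1$. On $[\delta',\delpl]$ the normal form is legitimately available, so $H_\eps^1$ is a linear contraction contributing $-c_3(\delta',\delpl)/\eps$, and every trajectory through the chosen segment of $\Gapl$ arrives at $\Gamma'$ exponentially close to the true slow curve. On $[\eps^\mu,\delta']$ no normalization is attempted: one computes $\ln(H_\eps^2)'$ directly from the variational equation in the original coordinates, replaces the trajectory $x(y,\eps)$ by $s^+(y,\eps)$ (licensed by the exponential closeness together with Lemma~\ref{lem-robust}), and then invokes the Mishchenko--Rozov estimate (Theorem~\ref{thm-mr-tsc-est}) to pass to $s^+(y)$, obtaining $-c_4(\delta')/\eps+O(\eps^{\mu-1})$. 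This Mishchenko--Rozov input is the key ingredient you are missing; the exponent $\mu$ is dictated by it (Remark~\ref{rem-mu}), not by any extension of the Fenichel normalization.
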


To prove this proposition we need a description of the asymptotics of the true slow curve near
the jump point:

\begin{theorem}[\cite{MischenkoRozov}, p. 119]
	\label{thm-mr-tsc-est}
	There exists a~$\mu\in(0,1/3)$ such that the function defining true slow
	curve, $x=s(y,\eps)$, admits the following representation for~$y\in
	[\eps^\mu,\delpl]$ :

	\begin{equation}\label{eq-s-MR-est}
		s(y,\eps) = s(y) + O(\eps^{2/3-\mu/2}),
	\end{equation}
where~$s(y)$ defines the slow curve.
\end{theorem}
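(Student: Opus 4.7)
The plan is to extract the stated asymptotics from the invariance equation for the slow manifold, combining an outer expansion valid away from the jump point with a Painlev\'e-type inner rescaling at the fold $G^+$. Since $x=s(y,\eps)$ parametrizes an invariant curve of system~\bref{eq-main-norm}, it satisfies
\begin{equation*}
\eps\,\partial_y s(y,\eps)=f(s(y,\eps),y,\eps).
\end{equation*}
Writing $r(y,\eps):=s(y,\eps)-s(y)$ and using $f(s(y),y,0)=0$, Taylor expansion yields
\begin{equation*}
f_x(s(y),y,0)\,r=\eps\,s'(y)-\eps\,f_\eps(s(y),y,0)+O(r^2)+O(\eps\,r),
\end{equation*}
so at leading order $r\sim\eps\,s'(y)/f_x(s(y),y,0)$. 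The nondegeneracy conditions~\bref{eq-nondeg-main-wlog} together with the implicit function theorem give $s(y)=\spl+A\sqrt{y-\tpl}(1+o(1))$ near $G^+$; hence $s'(y)\sim(y-\tpl)^{-1/2}$ and $f_x(s(y),y,0)\sim(y-\tpl)^{1/2}$, so the naive outer bound $r=O(\eps/(y-\tpl))$ is singular at the fold and formally breaks down at the Painlev\'e scale $y-\tpl\sim\eps^{2/3}$.

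Next I would perform the rescaling $\xi=\eps^{-1/3}(x-\spl)$, $\eta=\eps^{-2/3}(y-\tpl)$, in which the system reduces to leading order to the autonomous Riccati-type equation
\begin{equation*}
\frac{d\xi}{d\eta}=\tfrac12 f_{xx}(G^+)\,\xi^2+f_y(G^+)\,\eta+O(\eps^{1/3}).
\end{equation*}
The true slow curve distinguishes the unique particular solution $\xi_\ast(\eta)$ satisfying $\xi_\ast(\eta)=A\sqrt{\eta}+O(\eta^{-1})$ as $\eta\to+\infty$. Matching this inner solution with the outer expansion on the overlap window $\eps^{2/3}\ll y-\tpl\ll 1$ yields the uniform estimate $|r(y,\eps)|=O(\eps^{2/3}/\sqrt{y-\tpl})$ on the whole strip, and evaluating at the worst point $y-\tpl=\eps^\mu$ gives $|r|=O(\eps^{2/3-\mu/2})$, which is the desired bound.

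The main obstacle is carrying out this matching rigorously. Away from the fold, an outer Picard iteration in powers of $\eps$ converges, but its error bounds all carry the factor $f_x^{-1}$, singular at $y=\tpl$; the inner Riccati problem, on the other hand, is autonomous only to leading order, and its large-$\eta$ asymptotics must be controlled in a norm that absorbs the $O(\eps^{1/3})$ perturbation. A clean way to organize both regimes simultaneously is to reformulate $r$ as the fixed point of a contraction on $[\eps^\mu,\delpl]$ in a sup norm weighted by $(y-\tpl)^{1/2}$: the weight absorbs the singularity of $f_x^{-1}$, while the restriction $y-\tpl\ge\eps^\mu$ keeps the nonlinear term $r^2$ genuinely subdominant compared to the linear ones. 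The constraint $\mu<1/3$ then emerges as follows: $\mu$ must be small enough that the matching window $\eps^\mu\ll y-\tpl\ll 1$ actually lies in the large-$\eta$ regime of the inner equation, and large enough that the $O(\eps^{1/3})$ perturbation of the Riccati stays below the final bound $O(\eps^{2/3-\mu/2})$; the threshold $\mu=1/3$ is precisely where these two requirements collide.
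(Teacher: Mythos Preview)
The paper does not prove this theorem at all: it is quoted from Mishchenko--Rozov, and the only justification given is that the stated form follows from representation~(16.10) in~\cite{MischenkoRozov} by a trivial coordinate change. So there is no in-paper argument to compare against. Your sketch is in fact the classical matched-asymptotics route that underlies the cited result: outer expansion of the invariance equation away from the fold, inner Riccati/Painlev\'e rescaling $(\xi,\eta)=(\eps^{-1/3}(x-\spl),\eps^{-2/3}(y-\tpl))$ at the fold, and matching on an overlap window. The invariance equation, the naive outer bound $r=O(\eps/(y-\tpl))$, the fold rescaling, and the matched estimate $r=O(\eps^{2/3}/\sqrt{y-\tpl})$ are all correct and yield exactly~\eqref{eq-s-MR-est} at $y-\tpl=\eps^\mu$. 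You are also right that the honest work is converting the formal matching into a contraction in a $\sqrt{y-\tpl}$-weighted norm; this is essentially what the book does.

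One small caveat: your account of the threshold $\mu=1/3$ does not quite hold together. The large-$\eta$ overlap condition only forces $\mu<2/3$, and comparing the $O(\eps^{1/3})$ Riccati perturbation (which is $O(\eps^{2/3})$ in the original $x$-variable) with the target $O(\eps^{2/3-\mu/2})$ gives no upper bound on~$\mu$ at all. The restriction $\mu<1/3$ in~\cite{MischenkoRozov} comes from the specifics of their iteration scheme and the control of higher-order remainders propagated over the growing $\eta$-interval, not from the two-line balance you describe; it is a technical rather than a sharp constraint. This does not affect the correctness of your outline, only the heuristic you offer for that particular inequality.
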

This theorem can be deduced by applying a trivial coordinate change to
representation (16.10) from the cited work.

\begin{remark}\label{rem-mu}
	In the definition of the cross-section~$\Gaeps^2$ we set $\mu$ equal to the value
	given by theorem~\ref{thm-mr-tsc-est}.
\end{remark}
\begin{proof}[Proof of the Proposition~\ref{prop-sf-near-fold-est}] 

	Fix a cross-section~$\Ga'=\{y=\delta'\}$ for~$\delta' \ll
	\delpl$. Assume that~$\eps$ is small enough to assure~$\delta'>\eps^\mu$.  We
	represent the map~$\Reps^1$ as a composition:

	\begin{equation}
		\Reps^1: \Gapl\stackrel{H^1_\eps}{\to} \Ga'
		\stackrel{H^2_\eps}{\to} \Gaeps^2.
	\end{equation}
	Suppose that the trajectory passing through the point~$(u,\delpl)$ is the 
	graph of a function~$x=x(y,\eps)$.  Theorem~\ref{thm-normnearss} implies
	that~$H^1_\eps$  is a linear contraction with an exponential
	rate of order~$O(e^{-C/\eps})$ in normalizing charts. Fix a segment of~$\Gapl$
	that intersects the slow curve~$M^+$.  Due to the exponential contraction, any
	trajectory that crosses this segment is quickly attracted to
	true slow curve in reverse time. We have:

	\begin{equation}
		|x(\delta',\eps)-s^+(\delta',\eps)|\le \const e^{-C/\eps}
	\end{equation}
	With an appropriate choice of $\delta'$, we can move~$\Gamma'$ arbitrarily
	close to $\Gaeps^2$ for $\eps$ small enough. Thefore, the trajectory spends much
	more time during the transition from $\Gapl$ to $\Gamma'$ than during the transition
	from $\Gamma'$ to $\Gaeps^2$.  Hence, lemma~\ref{lem-robust} implies that for
	any  $y\in[\eps^\mu,\delta']$ the following estimate holds: 

	\begin{equation}\label{eq-xs-near-jump-est}
		|x(y,\eps)-s^+(y,\eps)|\le \const
		e^{-C/\eps}
	\end{equation}
	In other words, the corresponding trajectory on the segment from~$\Gamma'$ to
	$\Gaeps^2$ moves exponentially close to the true slow curve. 

	Variational equations imply: 

	\begin{multline}\label{eq-q-prim-est}
		\ln (\Reps^1)'(u)=\ln (H^1_\eps)'(u)+\ln (H^2_\eps)'\circ
		H^1_\eps(u)=\\=
		\fr{-c_3(\delta',\delpl)}{\eps}+\frac{1}{\eps}\int_{\delta'}^{\eps^\mu}
		f'(x(y,\eps),y,\eps)dy+O(1),
	\end{multline}
	where~$c_3(\delta',\delpl)>0$.

	Due to~\bref{eq-xs-near-jump-est}, one can replace $x(y,\eps)$ by
	$s^+(y,\eps)$ in the second term of~\eqref{eq-q-prim-est}. Moreover,
	using representation~\bref{eq-s-MR-est}, we obtain:

	\begin{multline}
		\label{eq-q-two-est}
		\ln (H_\eps^2)'=\fr{1}{\eps}\int_{\delta'}^{\eps^\mu} f'_x(x(y,\eps),y,\eps)dy=
		\fr{1}{\eps}\int_{\delta'}^{\eps^\mu}f'_x(s^+(y,\eps),y,\eps) dy + O(1)
		=\\
		= \fr{1}{\eps}\left[\int_{\delta'}^{\eps^\mu} f'_x(s^+(y),y,0) dy
		+O(\eps^{2/3-\mu/2})\right]=\fr{1}{\eps}\left[ \int_{\delta'}^0
		f'_x(s^+(y),y,0)dy+O(\eps^{\mu}) \right]=\\=\fr{-c_4(\delta')+O(\eps^{\mu})}{\eps}
	\end{multline}
	Denoting~$C=-(c_3+c_4)$ and substituting~\bref{eq-q-two-est}
	into~\bref{eq-q-prim-est}, we obtain~\bref{eq-q-final-est}. The number $C$
	is~$\delpl$-monotonic, because expressions under the integral sign
	in~\eqref{eq-q-prim-est} and~\eqref{eq-q-two-est} are positive. 
\end{proof}

\subsection{Distortion lemma: proof of proposition~\ref{prop-Reps-estimate}}\label{ssec-Reps-estimate-proof}

In this section we prove proposition~\ref{prop-Reps-estimate}. Instead of
considering the Poincar\'e map from one vertical cross-section to another, we
consider Poincar\'e map from the horizontal cross-section~$x=0$ to itself. We will
iterate this Poincar\'e map and estimate the derivative of these iterations using
the~\emph{Distortion Lemma} due to Denjoy and Schwartz. To apply this lemma we
need some additional estimates obtained from the variational equations. 

\subsubsection{Vertical Poincar\'e map}\label{ssec-vertical}
\begin{proof}[Proof of the proposition~\ref{prop-Reps-estimate}]
	Along with the system~\bref{eq-main} we will consider two auxilliary systems
	with the same phase portraits for any given~$\eps$:

	\begin{equation}\label{eq-sf-norm-y}
		\dt x = v(x,y,\eps),\quad
		\dt y = \eps;
	\end{equation}
	and
	\begin{equation} \label{eq-sf-norm-x}
		\dt x= 1,\quad
		\dt y=\eps w(x,y,\eps),
	\end{equation}
	where 

	\begin{equation}
		v(x,y,\eps)=\frac{f(x,y,\eps)}{g(x,y,\eps)},\quad
		w(x,y,\eps)=\frac{g(x,y,\eps)}{f(x,y,\eps)}.
	\end{equation}
	According to remark~\ref{rem-dividing-out-g}, without loss of generality we
	can assume that $g=1$ and then $v=f$, $w=1/f$.

	Nondegenerecity conditions~\bref{eq-nondeg-main-wlog} imply that for~$a<y<0$
	and for some positive constants~$c_v, C_v, c_w, C_w$ the following equivalent
	estimates hold:	

	\begin{equation}\label{eq-vw-est}
		0<-c_v (y+O(\eps))<v(x,y,\eps)<C_v\quad\Leftrightarrow\quad
		0<c_w<w(x,y,\eps)<-\frac{C_w}{y+O(\eps)}
	\end{equation}
	For the sake of simplicity, we will rescale the coordinate $x\mapsto 2\pi x$ so
	that $x$ becomes a coordinate modulo $\bbZ$. 

	Denote the line $\{x=0\}=\{x=1\}$ by~$\Sigma$, and the Poincar\'e
	map~$\Sigma\to\Sigma$ by~$\psi$ (see fig.~\ref{fig-general-view-distortion}).
	Recall that we are considering the following cross-sections:	

	\begin{gather*}
		\Ga=\{(x,y)\mid y=a\}\\
		\Gaeps^1=\{(x,y) \mid y=-\eps^\nu\}.
	\end{gather*}
	For brevity, we will write $\Gaeps$ instead of~$\Gaeps^1$.

	Let us define a correspondence map~$\gamma$ from the cross-section~$\Ga$ to
	the semi-interval~$J_0\subset\Sigma$ in forward time (see
	fig.~\ref{fig-general-view-distortion}). Note
	that~$\gamma$ is not countinous at~$0$, so in the following analysis we will
	replace the circle~$\Ga$ by the semi-interval~$[0,1)$, where~$\gamma$ is
	continious. Consequently, let us denote:

	$$J_{k+1}:=\psi(J_k),\quad \psi|_{J_k}=:\psi_k, \quad k\ge 1$$
	Obviously, the semi-intervals~$J_k$ do not intersect each other and the right
	end of the $k$-th interval coincides with the left end of $(k+1)$-th
	interval.  Define~$N=N(\eps)$ in such way that~$J_N$ intersects~$\Gaeps$.
	Denote by $\tau$ the intersection point of $\Gaeps$ and the trajectory
	passing thought $(0,a)$. 

	\begin{figure}[t]
		\psfrag{Ga}{$\Ga$}
		\psfrag{J0}{$J_0$}
		\psfrag{J1}{$J_1$}
		\psfrag{J2}{$J_2$}
		\psfrag{J3}{$J_3$}
		\psfrag{JN}{$J_N$}
		\psfrag{Ge}{$\Gamma_\eps$}
		\psfrag{tau}{$\tau$}
		\psfrag{a}{$a$}
		\psfrag{be2}{$-\frac{\eps^\nu}{2}$}
		\psfrag{be}{$-\eps^\nu$}
		\psfrag{x}{$x$}
		\psfrag{y}{$y$}
		\psfrag{0}{$0$}
		\psfrag{1}{$1$}
		\psfrag{f}{$\psi$}
		\psfrag{Peps}{$Q^3_\eps$}
		\psfrag{T}{$\Gpl$}

		\begin{center}
			\includegraphics[scale=0.8]{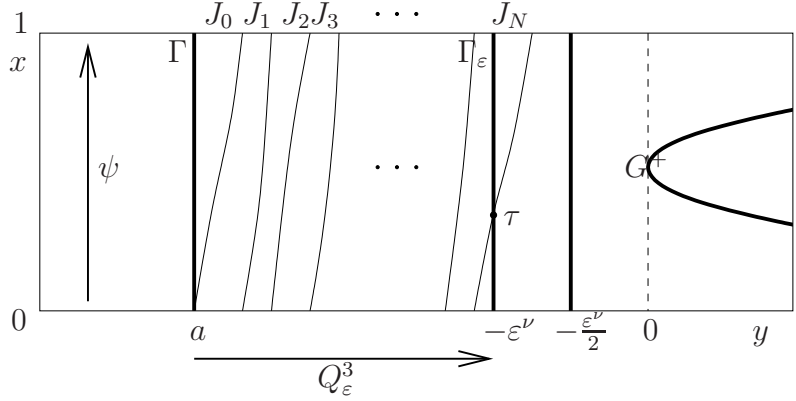}
			\caption{Correspondence maps}\label{fig-general-view-distortion}
		\end{center}
	\end{figure}
	Let us define~$\gaeps: \Gaeps\to J_N$ in the following way. On 
	the interval~$\rarc{0}{\tau}$ (\emph{resp.}, $\rarc{\tau}{1}$) it coincides with the correspondence
	map for the phase flow of~\bref{eq-main} in the reverse (\emph{resp.}, forward)
	time. Defined this way, ~$\gaeps$ will be discontinious in~$\tau$, but
	its inverse~$\gaeps^{-1}$ continiuosly projects~$J_N$ to the
	circle~$\Gaeps$ along the phase curves.

	Now~$Q^3_\eps$ may be represented by the following composition:

	\begin{gather*}
		Q^3_\eps: \Gamma\stackrel{\gamma}{\to}
		J_0\stackrel{\psi_0}{\to}\cdots\stackrel{\psi_{N-1}}{\to} J_N
		\stackrel{\gaeps^{-1}}{\to} \Gaeps\\
		Q^3_\eps=\gaeps^{-1}\circ \psi_{N-1}\circ \cdots \circ \psi_0 \circ
		\gamma=\gaeps^{-1}\circ F_{N-1}\circ \gamma	
	\end{gather*}
	The chain rule implies:
	\begin{equation}\label{eq-lnPprim-decomp}
		|\ln \Qeps'| \le |\max_{J_N}\ln |(\gaeps^{-1})'|| + |\max_{J_0}\ln
		F'_{N-1}| + |\max_{\Gamma}\ln |\gamma'||.
	\end{equation}
	We will obtain the following estimates for the terms of this sum.

	\begin{gather}
		|\ln |\gaeps'|| < C \eps^{\nu-1}\label{eq-lnPprim-gaeps}\\
		|\ln F_{N-1}'| \le (\nu-1) \ln \eps + \const + o(1)\label{eq-lnPprim-middle}\\
		|\ln |\gamma'|| \le \ln \frac{1}{\eps}\label{eq-lnPprim-gamma}
	\end{gather}
	These estimates are obtained below and they justify~\eqref{eq-Reps-est}.
\end{proof}
\begin{remark}
	Inequality~\bref{eq-lnPprim-decomp} can be applied to every point of~$\Gamma$
	excluding~$0$, because $\gamma$ and $\gaeps$ are discontinuous in~$0$
	and~$\tau$ respectively. However, we can still obtain necessary inequalities
	on the whole~$\Gamma$. To this end, we have to define another
	cross-section~$\Sigma$ (e.g.~$\Sigma=\{x=\frac{1}{2}\}$) and use the same
	arguments for this new cross-section to obtain the inequality for $x=0$.
\end{remark}
\subsubsection{Application of the Distortion Lemma}
In this section we will obtain~\bref{eq-lnPprim-middle}. First, we give the
statement of the~\emph{Distortion Lemma}.
\begin{definition}
For any diffeomorphism~$\psi: A \to B$ define its \emph{distortion rate} as:

	\begin{equation}
		\kappa(\psi)=\ln \frac{\max_A \psi'}{\min_A \psi'}=\max_{x,y\in A} \ln
		\frac{\psi'(x)}{\psi'(y)}.
	\end{equation}
\end{definition}
\begin{lemma}[Distortion Lemma (\cite{Denjoy}, \cite{Schwartz})]\label{lem-dist}
	Consider a sequence of arbitrary intervals and their orientation-preserving diffeomorphisms:	
	$$
	J_0\stackrel{\psi_0}{\to} J_1 \stackrel{\psi_1}{\to}\cdots
	\stackrel{\psi_{N-1}}{\to} J_N.
	$$
	Then the following estimate holds for the
	composition~$F_{N-1}:=\psi_{N-1}\circ\cdots\circ \psi_{0}$ of these maps: 
	\begin{equation}\label{distortion-lem-est}	
		\kappa(F'_{N-1}) \le \max_{i} \max_{x\in J_i}\cdot
		\left|\frac{\psi_i''(x)}{\psi_i'(x)}\right| \sum_{k=0}^{N-1} |J_k|.
	\end{equation}
\end{lemma}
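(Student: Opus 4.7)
The plan is to reduce the estimate to a telescoping-style sum by taking logarithms and applying the chain rule, then control each summand via the mean value theorem. Writing $F_{i-1}:=\psi_{i-1}\circ\cdots\circ\psi_0$ (with $F_{-1}=\mathrm{id}$), the chain rule gives
\begin{equation*}
\ln F_{N-1}'(x)=\sum_{i=0}^{N-1}\ln\psi_i'(F_{i-1}(x)),
\end{equation*}
and for any two points $x,y\in J_0$ one obtains
\begin{equation*}
\ln F_{N-1}'(x)-\ln F_{N-1}'(y)=\sum_{i=0}^{N-1}\bigl[\ln\psi_i'(F_{i-1}(x))-\ln\psi_i'(F_{i-1}(y))\bigr].
\end{equation*}

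I would then bound each summand using the mean value theorem applied to the function $\ln\psi_i'$, whose derivative is precisely $\psi_i''/\psi_i'$. This yields
\begin{equation*}
\bigl|\ln\psi_i'(F_{i-1}(x))-\ln\psi_i'(F_{i-1}(y))\bigr|\le \max_{\xi\in J_i}\left|\frac{\psi_i''(\xi)}{\psi_i'(\xi)}\right|\cdot\bigl|F_{i-1}(x)-F_{i-1}(y)\bigr|.
\end{equation*}
Crucially, $F_{i-1}$ maps $J_0$ into $J_i$, so $|F_{i-1}(x)-F_{i-1}(y)|\le |J_i|$. Pulling out the global maximum of $|\psi_i''/\psi_i'|$ over all $i$ and summing over $i=0,\ldots,N-1$ gives
\begin{equation*}
\bigl|\ln F_{N-1}'(x)-\ln F_{N-1}'(y)\bigr|\le \max_i\max_{\xi\in J_i}\left|\frac{\psi_i''(\xi)}{\psi_i'(\xi)}\right|\cdot\sum_{k=0}^{N-1}|J_k|,
\end{equation*}
and taking the supremum over $x,y\in J_0$ on the left-hand side produces exactly $\kappa(F_{N-1}')$.

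There is no genuine obstacle here; the argument is essentially a one-line identity plus mean value. The only subtlety is keeping the indexing straight: one must verify that $F_{i-1}(J_0)\subset J_i$ for each $i=0,\ldots,N-1$ (so that the length bound $|J_i|$ is legitimate) and that the image of $F_{i-1}$ lies in the domain of $\psi_i$ so that $\psi_i''/\psi_i'$ is evaluated on the right interval. Both follow immediately from the hypothesis that each $\psi_i$ is an orientation-preserving diffeomorphism $J_i\to J_{i+1}$. No smoothness beyond $C^2$ is needed, and no additional assumption on the $\psi_i$ enters.
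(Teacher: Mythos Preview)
Your proof is correct and matches the paper's approach: the paper sketches the argument as ``subadditivity (under the composition) of the distortion rate'' together with the mean value theorem bound $\kappa(\psi)\le\max|\psi''/\psi'|\cdot|A|$, and your chain-rule decomposition $\ln F_{N-1}'(x)=\sum_i\ln\psi_i'(F_{i-1}(x))$ is precisely the unwinding of that subadditivity. The only difference is that you make explicit what the paper leaves as a one-line remark.
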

This lemma easily follows from the subadditivity (under the composition) of
the distortion rate and the mean value theorem, which implies that:
\begin{equation}
	\kappa(\psi)=\max_{x,y\in A}(\ln \psi'(x)-\ln \psi'(y))\le \max_{z\in
	A}\left|\fr{\psi''(z)}{\psi'(z)}\right| \cdot |A|.
\end{equation}
Our strategy is to apply the Distortion Lemma to composition $F_{N-1}$ of the
iterations of the vertical Poincar\'e map~$\psi$ (see subsection~\ref{ssec-vertical}).
To this end, we have to
estimate the sum of lengths for~$\{J_k\}_{k=0}^{N-1}$ and the 
distortion rate of~$\psi$. In the rest of this subsection we consider 
system~\bref{eq-sf-norm-x}.

By construction, the intervals~$J_k$ do not intersect each other. Therefore, to
estimate the sum of their lengths it is sufficient to control the last
interval~$J_N:=[x_N,x_{N+1}]$.

\begin{proposition}\label{prop-lastJ}
	For any~$\nu<1/2$ and~$\eps$ small enough, we have:
	\begin{gather}\label{eq-J-N-est}
		|J_N|\le \eps^\nu\\
		x_{N+1}<-\fr{1}{2}\eps^{\nu}
	\end{gather}
\end{proposition}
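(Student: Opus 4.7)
System \bref{eq-sf-norm-x} realizes $\psi$ as the time-$1$ map of the scalar ODE $dy/dx=\eps w(x,y,\eps)$ along one horizontal circuit ($x$ runs from $0$ to $1$). In particular, for every iterate
\begin{equation*}
x_{k+1}-x_k=\eps\int_0^1 w\bigl(x,y(x),\eps\bigr)\,dx,
\end{equation*}
where $y(\cdot)$ is the solution through $(0,x_k)$. The right-hand inequality in \bref{eq-vw-est} yields the following key one-step estimate: if throughout the circuit the trajectory stays in $\{y\le -c\}$ for some $c>0$, then $w\le C_w/(c-O(\eps))$ and hence $x_{k+1}-x_k\le \eps C_w/(c-O(\eps))$.

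The argument for the last circuit is a bootstrap/continuity one. First, the definition of $N$ (that $J_N$ is the first interval intersecting $\Gamma_\eps=\{y=-\eps^\nu\}$) forces $x_N\le -\eps^\nu$. Choose $\eps$ so small that $2C_w\,\eps^{1-\nu}<\eps^\nu/2$ — this is possible precisely because $\nu<1/2$ makes $\eps^{1-\nu}/\eps^\nu=\eps^{1-2\nu}\to0$. Set
\begin{equation*}
T=\sup\bigl\{\,x\in[0,1]\;:\;y(s)\le -\eps^\nu/2\ \text{for every}\ s\in[0,x]\,\bigr\}.
\end{equation*}
On $[0,T]$ the key estimate with $c=\eps^\nu/2$ gives $y(T)-x_N\le 2C_w\eps^{1-\nu}<\eps^\nu/2$. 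If $T<1$, continuity of $y(\cdot)$ would force $y(T)=-\eps^\nu/2$, whence $y(T)-x_N\ge\eps^\nu/2$, contradicting the bound. Hence $T=1$, the whole circuit stays in $\{y<-\eps^\nu/2\}$, and in particular $x_{N+1}<-\eps^\nu/2$, which is the second assertion.

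Applying the key estimate again on the whole of $[0,1]$ (now legitimately, since we have just proved $y<-\eps^\nu/2$ throughout) we get
\begin{equation*}
|J_N|=x_{N+1}-x_N\le 2C_w\,\eps^{1-\nu}\le \eps^\nu,
\end{equation*}
the last inequality using once more $1-\nu>\nu$ and $\eps$ small. This gives the first assertion.

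The only non-routine ingredient is the bootstrap itself: one cannot apply the bound in \bref{eq-vw-est} directly, because a priori $y$ could approach $0$ within the circuit and make $w$ blow up. The sharp hypothesis $\nu<1/2$ enters exactly to produce the strict gap $\eps^{1-\nu}\ll\eps^\nu$ that closes the bootstrap and simultaneously yields both stated inequalities.
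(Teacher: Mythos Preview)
Your argument is correct. It differs from the paper's in the way the one-circuit estimate is extracted from the bound on~$w$ in~\bref{eq-vw-est}. The paper first enlarges $J_N$ to $[\psi^{-1}(-\eps^\nu),\psi(-\eps^\nu)]$ and then controls $\psi(-\eps^\nu)$ by a direct ODE comparison: it replaces $\dot y=\eps w$ by the majorant equation $\dot y_*=-\eps C_w/(y_*+O(\eps))$, integrates it explicitly to $y_*(t)=-\sqrt{y_0^2-2\eps C_w t+O(\eps)}$, and reads off $\psi(-\eps^\nu)\le -\eps^\nu\sqrt{1+O(\eps^{1-2\nu})}<-\tfrac12\eps^\nu$; the backward map is handled symmetrically to get $\psi^{-1}(-\eps^\nu)>-\tfrac32\eps^\nu$, whence $|J_N|\le\eps^\nu$. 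Your bootstrap instead works directly from $x_N\le -\eps^\nu$ and avoids both the explicit comparison solution and the (implicit) appeal to a comparison principle: you simply freeze the worst-case bound $w\le 2C_w\eps^{-\nu}$ on the a~priori region $\{y\le -\eps^\nu/2\}$ and close the continuity argument using the gap $\eps^{1-\nu}\ll\eps^\nu$. Both routes hinge on exactly this gap (and hence on $\nu<1/2$); the paper's gives the slightly sharper intermediate estimate $|J_N|=O(\eps^\nu\cdot\eps^{1-2\nu})$ via the square-root formula, while yours is more self-contained.
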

\begin{proof}
	Let us denote the projections of the phase space to~$\Si$ along the phase
	curves in forward ~$(+)$ and reverse~$(-)$ time as~$T^\pm$. Obviosly, 
	\begin{multline}
		J_N=[T^-(\tau,-\eps^\nu),T^+(\tau,-\eps^\nu))\subset\\
		\subset[T^-(1,-\eps^\nu),T^+(0,-\eps^\nu)]=[\psi^{-1}(-\eps^\nu),\psi(-\eps^\nu)].
	\end{multline}
	Let us consider the trajectory that passes through the point~$(0,-\eps^\nu)$.
	The second equation of the system~\bref{eq-sf-norm-x} is:

	\begin{equation}
		\dt y=\eps w(x,y,\eps)
	\end{equation}
By~\eqref{eq-vw-est}, the right-hand side of this equation is positive and can be estimated from above by
	$-\eps\frac{C_w}{y+O(\eps)}$ (recall that in the domain under
	considertion~$y<0$). Solving the equation

	\begin{equation}
		\dt y_* = -\eps \frac{C_w}{y_*+O(\eps)},
	\end{equation}
	we find that

	\begin{equation}
		y_*(t;y_0)=-\sqrt{y_0^2-2\eps C_w t+O(\eps)},\quad
		y_*(0;y_0)=y_0<0.
	\end{equation}
	Therefore, 
	\begin{multline}
		T^+(0,-\eps^\nu)=
		y(1;-\eps^\nu)
		\le y_*(1;-\eps^\nu)=
		-\sqrt{\eps^{2\nu} + O(\eps)} = \\
		= - \eps^\nu \sqrt{1+O(\eps^{1-2\nu})}<
		-\frac{1}{2} \eps^\nu
	\end{multline}
	The last inequality holds for~$\eps$ small enough if $\nu<1/2$.

	Similar arguments show that~$T^-(0,-\eps^\nu)>-\frac{3}{2}\eps^\nu$, which
	implies the proposition.
\end{proof}
\begin{remark}
	Obviosly,~\bref{eq-vw-est} implies the following estimates for
	some positive constants
	$C_{J_N}$, $\ C_{J_0}$ and $c_{J_0}$:

	\begin{equation}\label{eq-J-misc-est}
		|J_N| \ge C_{J_N} \eps,\quad c_{J_0} \eps \le |J_0| \le C_{J_0} \eps.
	\end{equation}
\end{remark}
\begin{proposition}
	For any~$\nu<1/2$ the derivative of the Poincar\'e map~$\psi: \Si\to\Si$ admits
	the following estimate in the domain $a<y<-\eps^\nu$:
	\begin{gather}\label{eq-fprim-est}
		\psi' \ge \frac{1}{2},\\
		\psi'' \le C\eps^{1-4\nu} \label{eq-frpimprim-est}.
	\end{gather}
\end{proposition}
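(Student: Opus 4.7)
The plan is to work with the equivalent system~\bref{eq-sf-norm-x}, for which $\dot x = 1$ allows us to parametrize trajectories by $x$: setting $dy/dx = \eps w(x,y,\eps)$, the vertical Poincar\'e map becomes $\psi(y_0) = y(1;y_0,\eps)$, where $y(\,\cdot\,;y_0,\eps)$ is the solution starting at $(0,y_0)$. Both estimates will then follow from the first- and second-order variational equations in $x$, combined with pointwise bounds on $w'_y$ and $w''_{yy}$ obtained from~\bref{eq-vw-est} and the smoothness of $v=f$.

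The one step that requires genuine care is a drift estimate guaranteeing that the trajectory remains in the regime where~\bref{eq-vw-est} is usable uniformly in $x\in[0,1]$. For $y_0\in(a,-\eps^\nu)$ one has $|y(x)-y_0|\le \eps\int_0^1 |w|\,dx \le C\eps/|y_0|\le C\eps^{1-\nu}$, and since $\nu<1/2$ this is $o(\eps^\nu)$; hence $|y(x)|\ge \tfrac12 \eps^\nu$ for all $x\in[0,1]$. Writing $w=1/v$ and using that $v$ is smooth with $v\ge c|y|$ from~\bref{eq-vw-est}, one gets $|w'_y|\le C|y|^{-2}$ and $|w''_{yy}|\le C|y|^{-3}$; along the trajectory these read $|w'_y|\le C\eps^{-2\nu}$ and $|w''_{yy}|\le C\eps^{-3\nu}$. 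Without this drift step the bounds on $w$ and its derivatives degrade on the interior of the trajectory segment; this is the main (and essentially only) obstacle in the proof.

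To prove~\bref{eq-fprim-est}, apply the standard variational formula
\begin{equation*}
\psi'(y_0)=\exp\left(\eps\int_0^1 w'_y(x,y(x;y_0,\eps),\eps)\,dx\right),
\end{equation*}
whose exponent is bounded in absolute value by $C\eps^{1-2\nu}\to0$; thus $\psi'\to 1$, and in particular $\psi'\ge 1/2$ for $\eps$ small. For~\bref{eq-frpimprim-est}, apply the second-order variational equation: $y_{y_0 y_0}$ satisfies the linear ODE $(y_{y_0 y_0})'_x=\eps w'_y\cdot y_{y_0 y_0}+\eps w''_{yy}(y_{y_0})^2$ with zero initial data, whose explicit solution gives
\begin{equation*}
\psi''(y_0)=\int_0^1 \exp\left(\int_s^1 \eps w'_y\,dx\right)\eps\,w''_{yy}(s,y(s),\eps)\,(y_{y_0}(s))^2\,ds.
\end{equation*}
Both $y_{y_0}$ and the outer exponential are $O(1)$ by the previous paragraph, while $\eps|w''_{yy}|\le C\eps^{1-3\nu}$. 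Integration yields $|\psi''|=O(\eps^{1-3\nu})$, which is in particular bounded by $C\eps^{1-4\nu}$ for $\eps$ small. This yields the claimed (slightly loose) estimate and completes the proposition.
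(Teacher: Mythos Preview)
Your argument is correct and follows essentially the same route as the paper: parametrize by $x$ via system~\bref{eq-sf-norm-x}, use the first- and second-order variational equations for $y(x;y_0)$, and feed in pointwise bounds on $w'_y$ and $w''_{yy}$ coming from $w=1/v$ with $v\ge c|y|$. Two minor remarks are worth making.

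First, your drift step as written is slightly circular: the bound $\int_0^1|w|\,dx\le C/|y_0|$ already presupposes $|y(x)|\gtrsim|y_0|$ along the trajectory. This is easily repaired by a standard bootstrap (as long as $|y(x)|\ge|y_0|/2$ the drift is $\le 2C\eps/|y_0|\le 2C\eps^{1-\nu}\ll\eps^\nu$, so the hypothesis persists), but it should be said. The paper handles this point differently, invoking the previous Proposition~\ref{prop-lastJ}, which was proved by an explicit comparison ODE $\dot y_*=-\eps C_w/(y_*+O(\eps))$; either way one obtains $|y(x)|\ge\tfrac12\eps^\nu$ on $[0,1]$.

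Second, your estimate on $\psi''$ is in fact sharper than the one stated: you obtain $|\psi''|=O(\eps^{1-3\nu})$ from $|w''_{yy}|\le C|y|^{-3}\le C\eps^{-3\nu}$, whereas the paper records only the looser $C\eps^{1-4\nu}$ (using the cruder bound $|w''_{yy}|\le C\eps^{-4\nu}$). Either suffices for the application, since only $\psi''/\psi'\to 0$ is needed in the Distortion Lemma, and your observation that $O(\eps^{1-3\nu})\le C\eps^{1-4\nu}$ closes the gap with the statement as written.
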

\begin{proof}
	Proposition~\ref{prop-lastJ} implies that a trajectory starting from the
	domain~$\{a\le y \le -\eps^\nu\}$ does not leave the domain 
	$\{a-O(\eps)\le y \le -\eps^\nu/2\}$ until it intersects~$\Si$.

	Let $y=y(t;y_0)$~ be the~$y$-coordinate of the solution of the
	system~\bref{eq-sf-norm-x} with the initial conditions~$x(0)=0,
	y(0)=y_0$. Obviously,

	\begin{equation}
		\psi(y_0)=y(1;y_0).
	\end{equation}
	Taking the derivative with respect to~$y_0$, we obtain:

	\begin{equation}
		\psi'(y_0)=\padi{y(1;y_0)}{y_0}=:\yy(1;y_0).
	\end{equation}
	The variational equation for~$\yy$ has the form:

	\begin{equation}\label{eq-var-yy}
		\dt{y}'_{y_0}  = \eps \padi{w}{y} \yy,\quad \yy(0;y_0)=1.
	\end{equation}
	Therefore,

	\begin{equation}\label{eq-yy-expint}
		\yy(t;y_0) = \exp\int_0^t 
		\eps\padi{w}{y}(x(\xi;y_0),y(\xi;y_0),\eps)\, d\xi.
	\end{equation}
	For~$a<y<-\frac{1}{2}\eps^\nu$ the derivative~$\padi{w}{y}$ 
	can be estimated as follows:

	\begin{equation}\label{eq-w2-est}
		\left|\padi{w}{y} \right|=\left|\padi{}{y}
		\frac{g(x,y,\eps)}{f(x,y,\eps)}\right| =
		\left| \frac{g'_y f-f'_y g}{f^2} \right| \le
		\frac{C}{(y+O(\eps))^2} \le
		\frac{C_1}{\eps^{2\nu}}.
	\end{equation}
	Therefore, for~$\nu<\frac{1}{2}$ and~$\eps$ small enough,

	\begin{equation}
		\psi'(y_0) \ge \exp\left(-\eps
		\frac{C_1}{\eps^{2\nu}}\right)=
		\exp(-C_1 \eps^{1-2\nu})>\frac{1}{2},
	\end{equation}
	which proves~\bref{eq-fprim-est}.

	To obtain~\bref{eq-frpimprim-est} we take the derivative
	of~\bref{eq-yy-expint} with respect to~$y_0$ for~$t=1$:

	\begin{multline}
		\psi''(y_0)=\padi{ }{y_0} \yy(1;y_0) 
		=\padi{}{y_0} \exp\int_0^1 
		\eps \padi{w}{y}(x(\xi;y_0),y(\xi;y_0),\eps)\, d\xi =\\
		= \eps \psi'(y_0) \int_0^1 \padi{^2 w}{y^2} \yy(\xi;y_0)\,
		d\xi \le \eps (\max_{\xi\in[0,1]} \yy(\xi;y_0))^2 \frac{C_2}{\eps^{4\nu}}
		\le\\
		\le C \eps^{1-4\nu} \exp(C \eps^{1-2\nu})\le C 
		\eps^{1-4\nu},
	\end{multline}
	where~$\padi{^2 w}{y^2}$ was estimated from above
	by~$\frac{C_2}{\eps^{4\nu}}$ (the justification is similar to~\bref{eq-w2-est}),
	and $\yy$ was estimated using~\bref{eq-var-yy} and~\bref{eq-w2-est}.
\end{proof}
\begin{corollary}
	For~$\nu<1/4$ and~$\eps\to 0$ we have:

	\begin{equation}
		\frac{\psi''}{\psi'} \le C \eps^{1-4\nu} \to 0.
	\end{equation}
\end{corollary}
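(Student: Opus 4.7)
The plan is to observe that the corollary is an immediate consequence of the two estimates~\bref{eq-fprim-est} and~\bref{eq-frpimprim-est} established in the preceding proposition, combined via simple division. The hypothesis $\nu < 1/4$ enters only to make the exponent $1-4\nu$ strictly positive, so that the upper bound tends to zero.

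More concretely, I would proceed as follows. First, I would recall that~\bref{eq-fprim-est} gives $\psi' \geq 1/2$, equivalently $1/\psi' \leq 2$, on the domain $a < y < -\eps^\nu$; and~\bref{eq-frpimprim-est} gives $\psi'' \leq C\eps^{1-4\nu}$ on the same domain. Multiplying the two inequalities yields
\begin{equation*}
	\frac{\psi''}{\psi'} \;\leq\; 2C\eps^{1-4\nu},
\end{equation*}
which is the stated bound (after absorbing the factor~$2$ into the constant~$C$). Second, since $\nu<1/4$ implies $1-4\nu>0$, the right-hand side tends to $0$ as $\eps\to 0^+$, completing the verification.

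The main (and essentially only) obstacle is bookkeeping: one must check that both~\bref{eq-fprim-est} and~\bref{eq-frpimprim-est} are valid under the stronger restriction $\nu<1/4$. This is automatic, since $\nu<1/4<1/2$ satisfies the hypotheses of the proposition that produced those estimates. No further analytic work is needed, and the corollary is what will ultimately allow the Distortion Lemma to be applied with a controlled distortion rate $\kappa(F_{N-1})$, since the factor $\max|\psi_i''/\psi_i'|$ in~\bref{distortion-lem-est} is precisely what is being bounded here.
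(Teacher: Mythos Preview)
Your proposal is correct and matches the paper's approach exactly: the paper states the result as an immediate corollary of~\bref{eq-fprim-est} and~\bref{eq-frpimprim-est} with no further argument, and your division of the two bounds (together with the observation that $\nu<1/4<1/2$ keeps the proposition applicable and makes $1-4\nu>0$) is precisely the intended one-line justification.
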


Therefore, the Distortion Lemma gives the following estimate for~$F_{N-1}$
(for brevity, we omit the index $N-1$ below):

\begin{equation}\label{eq-dist-lemma-F-est}
	\ln \frac{\max_{J_0} F'}{\min_{J_0} F'}
	\le C \max_{[a,-\eps^\nu]} \frac{\psi''}{\psi'} = o(1).
\end{equation}
It implies that

\begin{equation}\label{eq-dist-lemma-Fmaxmin}
	\ln\max_{J_0} F'=\ln\min_{J_0} F'+o(1).
\end{equation}
At the same time, the mean value theorem implies that for any~$\eps>0$ small enough
there exists some point~$y_0\in J_0$, such that

$$F'(y_0)=\frac{|J_N|}{|J_0|}.$$
According to the estimates~\bref{eq-J-N-est} and~\bref{eq-J-misc-est},
we have: 
\begin{equation}\label{eq-minmax-F-est}
	\begin{array}{rcccl}
		\min F' & \le & F'(y_0)& \le& C_1 \eps^{\nu-1},\\
		\max F' &\ge & F'(y_0) & \ge & C_2.
	\end{array}
\end{equation}
Taking the lograhithm of~\bref{eq-minmax-F-est} and using~\bref{eq-dist-lemma-Fmaxmin}, we have:

\begin{equation}
	\begin{array}{rcl}
		\ln \max F'&\le& \ln C_1\eps^{\nu-1}+o(1),\\
		\ln \min F'&\ge& \const+o(1).
	\end{array}
\end{equation}
Therefore,~\bref{eq-lnPprim-middle} is justified.

\subsubsection{The projection to the horizontal cross-section}
In this subsection inequalities~\bref{eq-lnPprim-gaeps}
and~\bref{eq-lnPprim-gamma} are proved. Consider the
system~\bref{eq-sf-norm-y}.

\begin{figure}[t]
	\psfrag{tga}{$\tgam$}
	\psfrag{a}{$a$}
	\psfrag{gax}{$\gamma(x)$}
	\psfrag{tx}{$\tilde{x}$}
	\psfrag{gax0}{$\gamma(x_0)$}
	\psfrag{x0}{$x_0$}
	\psfrag{R}{$R$}
	\psfrag{Ga}{$\Ga$}
	\psfrag{tGa}{$\Ga'$}
	\psfrag{y}{$y$}
	\psfrag{x}{$x$}
	\begin{center}
		\includegraphics[scale=0.6]{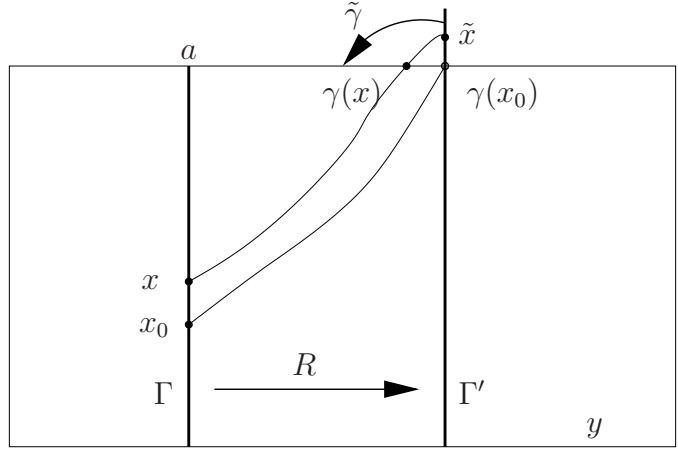}
		\caption{Estimate of the derivative~$\gamma$}\label{fig-gamma-estimate}
	\end{center}
\end{figure}
\begin{proposition}
	For the map~$\gamma: \Gamma \to J_0$ the following estimate holds: 
	\begin{equation}
		|\ln |\gamma'|| \le \ln \frac{1}{\eps}+O(1).
	\end{equation}
\end{proposition}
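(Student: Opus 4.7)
The plan is to express $\gamma'$ via implicit differentiation and then bound separately the variational derivative and the exit value of~$v$. Parametrize the trajectory through $(x_0, a)$ at time $t=0$ as $(x(t; x_0, \eps),\, a + \eps t)$. The first positive hitting time $t^*(x_0)$ of $\Sigma = \{x = 1\}$ satisfies $x(t^*; x_0, \eps) = 1$, and so $\gamma(x_0) = a + \eps t^*(x_0)$. Differentiating this identity in $x_0$ and using $\dot x = v$ yields
\begin{equation*}
	\gamma'(x_0) = \eps\,\dd{t^*}{x_0} = -\eps\,\frac{x_{x_0}(t^*;\, x_0,\, \eps)}{v(1,\,\gamma(x_0),\,\eps)},
\end{equation*}
where $x_{x_0} = \partial x/\partial x_0$ solves the variational equation $\dot x_{x_0} = v_x(x, y, \eps)\, x_{x_0}$ with $x_{x_0}(0) = 1$.

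Next I would show that $t^*$ is uniformly $O(1)$ in $x_0 \in (0,1)$ and in~$\eps$. The cross-section $\Gamma = \{y = a\}$ lies outside the strip $\{\tau^+ \le y \le \tau^-\}$ that contains the slow curve (by convexity of~$M$), so on a fixed $y$-neighborhood of~$a$ the function $v$ is bounded above and below by positive constants; compare with \eqref{eq-vw-est}. Since $\dot x = v$, integrating gives $1 - x_0 = \int_0^{t^*} v\, dt$, which forces $t^* = O(1)$. Consequently, for $\eps$ small enough the whole trajectory stays in $y \in [a,\, a + O(\eps)]$, which is contained in the controlled neighborhood; there $v$ and $v_x$ are uniformly bounded, and $v$ is bounded below by a positive constant.

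The variational equation now yields $x_{x_0}(t^*) = \exp\!\bigl(\int_0^{t^*} v_x\, dt\bigr) = \exp(O(1)) = \Theta(1)$, and together with $v(1,\gamma(x_0),\eps) = \Theta(1)$ the displayed formula gives $|\gamma'(x_0)| = \Theta(\eps)$. Taking the logarithm delivers the required estimate $|\ln|\gamma'(x_0)|| \le \ln(1/\eps) + O(1)$.

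The main obstacle is really only a bookkeeping issue: $\gamma$ is discontinuous at $x_0 = 0$ (where $\Gamma$ meets~$\Sigma$), so the pointwise argument above produces a uniform estimate only on the semi-interval $(0,1)$. To cover the singular point one applies the trick suggested in the remark preceding this proposition: repeat the entire analysis with an auxiliary horizontal cross-section, say $\Sigma' = \{x = 1/2\}$, for which $x_0 = 0$ is a regular point of the analogous correspondence map. The two estimates together give the bound on all of~$\Gamma$.
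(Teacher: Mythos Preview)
Your proof is correct and follows essentially the same approach as the paper. The paper decomposes $\gamma$ as a composition $\tilde\gamma \circ R$, where $R$ is the Poincar\'e map from $\Gamma$ to an auxiliary horizontal section $\Gamma' = \{y = \gamma(x_0)\}$ (with bounded derivative since the transit time is $O(1)$) and $\tilde\gamma$ is the local projection from $\Gamma'$ to $\Sigma$ along the flow (with derivative $-\eps w = -\eps/v = \Theta(\eps)$); your implicit-differentiation formula $\gamma' = -\eps\, x_{x_0}/v$ simply packages these two factors into a single expression using the same ingredients.
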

\begin{proof}
	Let us fix~$x_0$ and define~$y_0:=\gamma(x_0)$. We present~$\gamma(x)$ as a
	composition (see fig.~\ref{fig-gamma-estimate}):

	\begin{equation}
		\gamma: \Gamma \stackrel{R}{\to} \Gamma'
		\stackrel{\tgam}{\to} \Si,
	\end{equation}
	where~$\Gamma'=\{y=y_0\}$ is the shifted vertical cross-section,~$R$ is the
	correspondence map from~$\Gamma$ to $\Gamma'$, and~$\tgam$ is the correspondence
	map from some neighborhood of the point~$x=0$ on~$\Gamma'$ to some
	neighborhood of $y=y_0$ on~$\Si$, which is defined in the following way:

	\begin{equation}
		\tgam(\pm \tix)=T^{\mp} (\tix,y_0),
	\end{equation}
	Recall that~$T^{\pm}$ are the projections of the phase space to~$\Si$ along
	the trajectories of the system in forward~$(+)$ and reverse $(-)$ time.

	It is easy to see that~$\tgam'(1)=-\eps w(0,\gamma(x_0),\eps)$, because in
	a small neighborhood of~$(1,y_0)$ the map $\tgam$ is close to the linear
	projection from~$\Gamma'$ to~$\Si$ along the vector $(1,-\eps w(1,y_0,\eps))$.

	The function~$w$ is bounded away from zero and infinity in some neighborhood of
	the cross-section~$\Ga$. Therefore~$|\tgam'|$ is of order~$\eps$.

	Note, that the distance between~$\Gamma$ and~$\Gamma'$ is of order~$\eps$,
	and therefore the time of the transition between these cross-sections along the phase curves
	of the system~\bref{eq-sf-norm-y} is bounded from above. Hence,
	the derivative of~$R$ is bounded from above (what follows from the variational
	equations). Therefore, estimates for~$\tgam'$ and $\gamma'$ coincide.
\end{proof}
\begin{proposition}
	For every point $x_0\ne\tau$, the following estimate holds: 

	\begin{equation}\label{eq-gaeps-eps-nu-est}
		|\ln |\gaeps'(x_0)|| < C \eps^{\nu-1}
	\end{equation}
\end{proposition}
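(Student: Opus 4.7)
The plan is to adapt the decomposition used for $\gamma$ in the preceding proposition: factor $\gaeps$ through an auxiliary horizontal cross-section passing through the image point, estimate the two resulting factors separately, and show that the dominant contribution comes from the transit between horizontal sections (of duration $O(\eps^{\nu-1})$), while the final projection onto $\Sigma$ contributes only $O(|\ln\eps|)$ and is therefore negligible.

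Fix $x_0\in\Gaeps$ with $x_0\neq\tau$, and set $\tilde y := \gaeps(x_0)$. Proposition~\ref{prop-lastJ} locates $\tilde y$ in $J_N\subset[-\tfrac{3}{2}\eps^\nu,-\tfrac{1}{2}\eps^\nu]$, so in particular $|\tilde y|\ge\tfrac{1}{2}\eps^\nu$ and $|\tilde y+\eps^\nu|\le\tfrac{1}{2}\eps^\nu$. Introduce the horizontal cross-section $\widetilde\Gamma:=\{y=\tilde y\}$ and write
\[
\gaeps:\Gaeps\stackrel{H_\eps}{\longrightarrow}\widetilde\Gamma\stackrel{\tilde\gaeps}{\longrightarrow}\Sigma,
\]
where $H_\eps$ is the correspondence between the two horizontal sections along phase curves of the system (taken in forward or reverse time according to whether $x_0$ lies in $\rarc{\tau}{1}$ or $\rarc{0}{\tau}$), and $\tilde\gaeps$ is the short-time projection of a neighborhood of the image point onto $\Sigma$ along the same flow.

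For the short map $\tilde\gaeps$, I use the form~\bref{eq-sf-norm-x}: just as in the analysis of $\tgam$ in the preceding proposition, the trajectory is close to the linear projection in the direction $(1,\pm\eps w)$ near the endpoint, giving $\tilde\gaeps'=\mp\eps w(\cdot,\tilde y,\eps)+o(\eps)$. Estimate~\bref{eq-vw-est} applied with $|\tilde y|\ge\tfrac{1}{2}\eps^\nu$ yields $|w|=O(\eps^{-\nu})$, so $|\tilde\gaeps'|=O(\eps^{1-\nu})$, while the lower bound $|w|\ge c_w>0$ gives $|\tilde\gaeps'|\ge c\eps$. Consequently $\bigl|\ln|\tilde\gaeps'|\bigr|=O(|\ln\eps|)$. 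For the long map $H_\eps$, the sections $\Gaeps$ and $\widetilde\Gamma$ lie within $y$-distance $\tfrac{1}{2}\eps^\nu$; since $\dot y=\pm\eps$, the transit time is $t^*=|\tilde y+\eps^\nu|/\eps=O(\eps^{\nu-1})$. In the form~\bref{eq-sf-norm-y} the variational equation gives $H_\eps'(x_0)=\exp\!\bigl(\int_0^{t^*}v'_x(x(s),y(s),\eps)\,ds\bigr)$ along the transit trajectory $(x(s),y(s))$, and the uniform bound $|v'_x|\le C$ on the relevant neighborhood (inherited from the smoothness of $f$ and the lower bound on $g$) implies $\bigl|\ln|H_\eps'|\bigr|\le Ct^*=O(\eps^{\nu-1})$.

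The chain rule then yields
\[
\bigl|\ln|\gaeps'(x_0)|\bigr|\le\bigl|\ln|H_\eps'|\bigr|+\bigl|\ln|\tilde\gaeps'|\bigr|=O(\eps^{\nu-1})+O(|\ln\eps|)=O(\eps^{\nu-1}),
\]
because $\nu<1$ forces $|\ln\eps|=o(\eps^{\nu-1})$ as $\eps\to 0^+$, which is the required estimate~\bref{eq-gaeps-eps-nu-est}. The main obstacle is the bookkeeping of uniformity in $x_0$: one must verify that the transit trajectory stays inside the neighborhood of the jump point where~\bref{eq-vw-est} applies for all admissible $x_0$, and that the linearization underlying the estimate of $\tilde\gaeps'$ remains valid as $x_0$ ranges over $\Gaeps\setminus\{\tau\}$ (in particular as $x_0\to\tau$, where the direction of time changes and $\gaeps$ jumps). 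The two arcs $\rarc{0}{\tau}$ and $\rarc{\tau}{1}$ are treated symmetrically: time reversal merely swaps the branch $\{x=0\}$ or $\{x=1\}$ of $\Sigma$ reached by the trajectory and changes signs, without affecting the orders of magnitude.
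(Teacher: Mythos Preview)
Your proof is correct and follows essentially the same approach as the paper: you factor $\gaeps$ through the auxiliary horizontal section $\{y=\gaeps(x_0)\}$, bound the horizontal-to-horizontal map via the variational equation and the transit time $O(\eps^{\nu-1})$, and bound the short projection onto $\Sigma$ by $|\ln(\eps w)|=O(|\ln\eps|)$, which is absorbed into the main term. The paper's argument is identical up to notation (your $H_\eps,\tilde\gaeps,\widetilde\Gamma$ are its $R_\eps,\tgam_\eps,\Gaeps'$).
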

\begin{proof}
	Like in the previous proposition, we fix~$x_0$ and represent~$\gaeps$ as
	a composition:

	\begin{equation}
		\gaeps: \Gaeps\stackrel{\Reps}{\to} \Gaeps'
		\stackrel{\tgam_\eps}{\to} J_N,
	\end{equation}
	where~$\Gaeps'=\{y=\gaeps(x_0)\}$.

	According to proposition~\ref{prop-lastJ}, all the trajectories starting
	at~$\Gaeps$ stay in the strip $\{-\frac{3}{2}\eps^\nu < y <
	-\frac{1}{2}\eps^\nu\}$ until they cross~$\Si$ in forward or reverse time.
	Therefore, the trajectory of~\bref{eq-sf-norm-y} spends time~$t_*=C_1
	\eps^{\nu-1}$ before it intersects~$\Si$. Due to the smoothness
	of~$v(x,y,\eps)$ on the whole torus, the variational equations imply that 
	\begin{equation}\label{eq-R-eps-est}
		|\ln R'| \le C_2 t_* \le C \eps^{\nu-1}.
	\end{equation}
	Applying arguments from the previous proposition to~$\tgam_\eps'$, we obtain:

	\begin{equation}\label{eq-tgam-ln-est}
		\tgam_\eps'(0)=-\eps w(1,y_0,\eps),\quad |\ln \tgam_\eps'(0)| <
		(\nu-1) \ln \eps+O(1) \ll \eps^{\nu-1}.
	\end{equation}
Provided~\eqref{eq-R-eps-est} and~\eqref{eq-tgam-ln-est}, the chain rule
justifies~\eqref{eq-gaeps-eps-nu-est}.
\end{proof}
\begin{remark} We can put arbitrary~$\nu$ from the interval $(0,1/4)$ into the
	definition of the cross-section $\Gaeps=\Gaeps^1$.
\end{remark}



\newpage

\end{document}